\documentclass[reqno, 11pt]{amsart}

\parindent = 16 pt
\parskip = 2pt
\setlength{\textheight}{8.7in}\setlength{\textwidth}{6.5in}\setlength{\oddsidemargin}{0cm}\setlength{\evensidemargin}{0cm}\setlength{\topmargin}{0mm}

\usepackage{
amssymb,
amsmath,
amsthm,
eucal,
dsfont,
multicol,
mathrsfs,
tikz,
graphicx,
hyperref
}
\usepackage{lipsum}
\makeatletter\renewcommand*{\eqref}[1]{%
\hyperref[{#1}]{\textup{\tagform@{\!\!\ref*{#1}}}}%
}\makeatother 

\makeatletter
 
  \@addtoreset{equation}{section}
 \makeatother
\theoremstyle{plain}
\newtheorem{theorem}{Theorem}[section]
\newtheorem{lemma}[theorem]{Lemma}
\newtheorem{proposition}[theorem]{Proposition}

\theoremstyle{definition}

\newtheorem{remark}[theorem]{Remark}



\newcommand{\norm}[1]{{\|#1\|}}

\def\supp{\mathop{\mathrm{supp}}\nolimits}

\def\ac{\mathop{\mathrm{ac}}\nolimits}

\def\loc{\mathop{\mathrm{loc}}\nolimits}

\def\sgn{\mathop{\mathrm{sgn}}\nolimits}

\def\R{{\mathbb{R}}}
\def\Z{{\mathbb{Z}}}

\def\C{{\mathbb{C}}}

\def\<{{\langle}}
\def\>{{\rangle}}

\def\ep{{\varepsilon}}

\DeclareMathOperator*{\slim}{s-lim}

\title[The endpoint boundedness of wave operators]{  Counterexamples and  weak {\large \it (1,1)} estimates of wave operators for fourth-order Schr\"odinger operators in dimension three}

\author{Haruya Mizutani}
\address[H. Mizutani]{Department of Mathematics, Graduate School of Science, Osaka University, Toyonaka, Osaka 560-0043, Japan}
\email{haruya@math.sci.osaka-u.ac.jp}
\author{Zijun Wan}
\address[Z. Wan]{Department of Mathematics, School of Mathematics and Statistics, Central China Normal University, Wuhan, 430079, P.R. China}
\email{zijunwan@mails.ccnu.edu.cn}
\author{Xiaohua Yao}
\address[X. Yao]{School of Mathematics and Statistics,   Key Laboratory of Nonlinear Analysis  and Applications (Ministry of Education), Central China Normal University, Wuhan, 430079, P.R. China}
\email{yaoxiaohua@ccnu.edu.cn}
\keywords{Counterexample, Weak $L^1$-boundedness, Wave operator, Fourth-order Schr\"odinger operators }
\begin{document}
\date{\today}

\begin{abstract}
	This paper is dedicated to investigating the $L^p$-bounds of wave operators $W_\pm(H,\Delta^2)$ associated with fourth-order Schr\"odinger operators $H=\Delta^2+V$ on $\mathbb{R}^3$ with  real potentials  satisfying $|V(x)|\lesssim \langle x\rangle^{-\mu}$ for some $\mu>0$.
	A recent work by Goldberg and Green \cite{GoGr21} has demonstrated that wave operators $W_\pm(H,\Delta^2)$ are bounded on $L^p(\mathbb{R}^3)$ for all $1<p<\infty$ under the condition that $\mu>9$ and zero is a regular point of $H$.
In the paper, we aim to further establish endpoint estimates for $W_\pm(H,\Delta^2)$ in two significant ways. First, we provide counterexamples to illustrate the unboundedness of $W_\pm(H,\Delta^2)$ on the endpoint spaces $L^1(\mathbb{R}^3)$ and $L^\infty(\mathbb{R}^3)$ for non-zero compactly supported potentials $V$. Second, we establish weak (1,1) estimates for the wave operators $W_\pm(H,\Delta^2)$ and their dual operators $W_\pm(H,\Delta^2)^*$ in the case where zero is a regular point and $\mu>11$. These estimates depend critically on the singular integral theory of Calder\'{o}n-Zygmund on a homogeneous space $(X,d\omega)$ with a doubling measure $d\omega$.

\end{abstract}

\maketitle
\tableofcontents

\section{Introduction}
\subsection{The main results}
Let  $H=\Delta^2+V(x)$ be the fourth-order Schr\"odinger operator on $\mathbb R^3$, where $V(x)$ is a real-valued potential satisfying
$|V(x)|\lesssim \<x\>^{-\mu}$, $x\in \mathbb R^3$ with some $\mu>0$ specified later and $\<x\>=\sqrt{1+|x|^2}$.
As $\mu>1$, it was well-known  (see  e.g. \cite{Agmon,Kuroda,ReSi}) that the {\it wave operators}
\begin{align}\label{def-wave}
W_\pm=W_\pm(H,\Delta^2) :=\slim_{t\to\pm\infty}e^{itH}e^{-it\Delta^2}
\end{align}
exist as partial isometries on  $L^2(\mathbb{R}^3)$ and are asymptotically complete.

Note that $W_\pm$ are clearly bounded on $L^2(\R^3)$. Hence it would be interesting to establish the following $L^p$-bounds of $W_\pm$ for $p\neq2$:
\begin{align}
\label{Lp-bound}\|W_\pm \phi\|_{L^p(\mathbb{R}^3)}\lesssim \|\phi\|_{L^p(\mathbb{R}^3)}.
\end{align}
To explain the importance of these bounds, recall that $W_\pm$ satisfy the following identities
\begin{align*}
W_\pm W_\pm^* =P_{\ac}(H),\ \ W_\pm^*W_\pm=I,
\end{align*}
and {\it the intertwining property} $f(H)W_\pm=W_\pm f(\Delta^2)$,
where $f$ is  a Borel measurable function on $\mathbb{R}$. These formulas especially imply
\begin{align}
\label{intertwining_1}
f(H)P_{\ac}(H)=W_\pm f(\Delta^2)W_\pm^*.
\end{align}
 By virtue of \eqref{intertwining_1}, the $L^p$-boundedness of $W_\pm, W_\pm^*$ can immediately be used to reduce the $L^p$-$L^q$ estimates for the perturbed operator $f(H)$ to the same estimates for the free operator $f(\Delta^2)$ as follows:
\begin{align}
\label{Lp-bound of f(H)}
\|f(H)P_{\ac}(H)\|_{L^p\to L^q}\le \|W_\pm\|_{L^q\to L^q}\ \|f(\Delta^2)\|_{L^p\to L^q}\ \|W_\pm^*\|_{L^{p}\to L^{p}}.
\end{align}
For many cases, under suitable conditions on $f$, it is feasible to establish the $L^p$-$L^q$ bounds of  $f(\Delta^2)$ by Fourier multiplier methods. Thus, in order to obtain the inequality \eqref{Lp-bound of f(H)},  it is a key problem to prove the $L^p$-bounds \eqref{Lp-bound} of $W_\pm$ and $W_\pm^*$.

Recently, in the regular case (i.e., zero is neither an eigenvalue nor a resonance of $H$), Goldberg and Green \cite{GoGr21} have demonstrated that the wave operators $W_\pm$ are bounded on $L^p(\mathbb{R}^3)$ for all $1<p<\infty$ if $|V(x)|\lesssim \langle x \rangle^{-\mu}$ for some $\mu>9$ and there are no embedded positive eigenvalues in the spectrum of $H=\Delta^2+V$. Therefore, it is natural to consider whether the boundedness of $W_\pm$ holds for the endpoint cases, namely, when $p=1$ and $p=\infty$.

The following theorem provides a negative answer, showing that the wave operators $W_\pm$ are unbounded on $L^1(\mathbb{R}^3)$ and $L^\infty(\mathbb{R}^3)$ assuming that $V$ is compactly supported on $\mathbb{R}^3$.  Furthermore, weak (1,1) estimates for $W_\pm$ can be established  in the regular case, provided that $\mu>11$.


In order to state our results, we denote by $\mathbb B(X,Y)$  the space of bounded operators from $X$ to $Y$, $\mathbb B(X)=\mathbb B(X,X)$, and  by $L^{1,\infty}(\R^3)$  the weak  $L^{1}(\R^3)$.  Moreover,  we say that {\it zero is a regular point of $H=\Delta^2+V$ if  there only exists zero solution to  $H\psi=0$ in the weighted space $L_{-s}^2(\R^3)$ for all $s>{3\over 2}$}, where $L_{-s}^2(\R^3)=\langle \cdot\rangle^{s} L^2(\R^3)$.
\begin{theorem}
	\label{theorem_2}
Let $H=\Delta^2+V(x).$ 	Suppose that $V$ is compactly supported and $V\not\equiv0$ such that zero is a regular point of $H$ and  $H$ has no embedded eigenvalue in $(0,\infty)$. Then $W_\pm, W_\pm^*\notin \mathbb B\big(L^1(\R^3)\big)\cup \mathbb B\big(L^\infty(\R^3)\big)$.
\end{theorem}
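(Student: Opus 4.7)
By $L^p$-duality together with $(W_\pm)^* = W_\pm^*$, one has $W_\pm \in \mathbb B(L^p) \iff W_\pm^* \in \mathbb B(L^{p'})$, and the conjugation identity $W_- = \overline{W_+}$ (a consequence of $V$ being real) identifies the $+$ and $-$ cases. Hence the four asserted non-memberships reduce to the two statements $W_+\notin\mathbb B(L^1(\R^3))$ and $W_+\notin\mathbb B(L^\infty(\R^3))$. The plan is to derive both simultaneously from a common obstruction: a Schur-type criterion for $W_+$ that fails from both sides because the relevant kernel has a $1/(|x|\,|y|)$-type tail.

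I would isolate the first Born correction in the stationary representation of $W_+$. Starting from $W_+ - I = -\int_0^\infty R^+(\mu;H)\,V\,dE_{\Delta^2}(\mu)$ and iterating the perturbed resolvent identity $R^+(\mu;H) = R_0^+(\mu) - R_0^+(\mu)\,V\,R^+(\mu;H)$, one writes
\[
W_+ - I = W_+^{(1)} + \mathrm{Rem},\qquad W_+^{(1)} := -\int_0^\infty R_0^+(\lambda^4)\,V\,dE_{\Delta^2}(\lambda^4),
\]
with $\mathrm{Rem}$ carrying at least two factors of $V$. The extra $V$-factor, combined with the compact support of $V$ and the resolvent estimates of Goldberg-Green \cite{GoGr21}, is expected to render $\mathrm{Rem}$ bounded on both $L^1$ and $L^\infty$, leaving $W_+^{(1)}$ as the single source of the obstruction.

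The kernel of $W_+^{(1)}$ is explicit. Using
\[
R_0^\pm(\lambda^4)(x,y)=\frac{e^{\pm i\lambda|x-y|}-e^{-\lambda|x-y|}}{8\pi\lambda^2|x-y|},\qquad dE_{\Delta^2}(\lambda^4)(z,y)=\frac{\lambda\sin(\lambda|z-y|)}{2\pi^2|z-y|}\,d\lambda,
\]
and expanding $|x-z|=|x|-\hat x\cdot z+O(|z|^2/|x|)$, $|z-y|=|y|-\hat y\cdot z+O(|z|^2/|y|)$ for $|x|,|y|\gg \mathrm{diam}(\supp V)$, I would separate the oscillatory piece $e^{i\lambda|x-z|}$ from the exponentially decaying piece $e^{-\lambda|x-z|}$ and evaluate the resulting $\lambda$-integrals by contour/residue techniques (justified by the Paley--Wiener character of the $z$-integral, since $V$ is compactly supported). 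This should yield a large-distance asymptotic
\[
W_+^{(1)}(x,y)=\frac{F(\hat x,\hat y)}{|x|\,|y|}\bigl(1+o(1)\bigr),\qquad |x|,|y|\to\infty,
\]
with $F$ an explicit angular function built from low-frequency values of $\widehat V$. On any solid cone on which $F$ is bounded below, both $\int |W_+^{(1)}(x,y)|\,dy$ and $\int |W_+^{(1)}(x,y)|\,dx$ diverge because $\int_{|y|>R}|y|^{-1}\,dy=\infty$ in $\R^3$, so the Schur test fails in both directions, ruling out $W_+^{(1)} \in \mathbb B(L^1)\cup\mathbb B(L^\infty)$.

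The main obstacle is to prove that the leading angular coefficient $F$ does not vanish identically when $V\not\equiv 0$. This amounts to a low-energy non-cancellation statement: the simultaneous vanishing of $F$ in all directions would force a full family of spectral moments of $V$ at zero energy to cancel, which the regular-point hypothesis precludes when $V\not\equiv 0$. If isolated directional degeneracies $F(\hat x_0,\hat y_0)=0$ occur, one descends to the next order in the $(|x|^{-1},|y|^{-1})$-asymptotic expansion and reapplies the Schur argument there, the process terminating in finitely many steps. Together with the $L^1$- and $L^\infty$-bounds for $\mathrm{Rem}$, this non-vanishing delivers the theorem.
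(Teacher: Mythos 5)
Your reduction to the two statements $W_+\notin\mathbb B(L^1)$ and $W_+\notin\mathbb B(L^\infty)$ is fine, but the proposal then goes astray at the structural level: truncating the stationary representation at the first Born term does not isolate the part of $W_\pm$ that is responsible for the endpoint failure, and the claimed large-distance asymptotic of $W_+^{(1)}$ is wrong.

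The paper's analysis shows that the obstruction to $L^1$- and $L^\infty$-boundedness comes from a specific rank-one piece in the low-energy expansion of $M^{-1}(\lambda)=(U+vR_0^+(\lambda^4)v)^{-1}$, namely the term $\lambda\widetilde P$ with $\widetilde P=\tfrac{8\pi}{(1+i)\|V\|_{L^1}}P$ and $P=\|V\|_{L^1}^{-1}\langle\cdot,v\rangle v$. The coefficient $1/\|V\|_{L^1}$ appears because the $O(\lambda^{-1})$ singularity of the free resolvent gets \emph{resummed} in inverting $M(\lambda)$; the term $\widetilde P$ is therefore not a finite Born order, and it is not contained in $W_+^{(1)}$. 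By contrast, the terms that survive the $Q$-projection (what the paper calls Class I and III) turn out to be bounded on $L^1$ and $L^\infty$: the $Q$-cancellation $\int Qv\,dx=0$ removes the dangerous $\lambda^{-1}$ singularity. Your $\mathrm{Rem}$ (everything with $\ge2$ factors of $V$) mixes those good pieces with the bad $\widetilde P$-piece, and I see no reason to expect it bounded; the step ``is expected to render $\mathrm{Rem}$ bounded'' is the essential missing lemma, and I believe it is false as stated.

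The claimed asymptotic $W_+^{(1)}(x,y)=F(\hat x,\hat y)/(|x||y|)(1+o(1))$ is also incorrect, for a reason you can see quickly: any kernel behaving like $|x|^{-1}|y|^{-1}$ at infinity fails to be locally square integrable in $y$ (indeed $\int_{|y|>1}|y|^{-2}\,dy=\infty$ in $\mathbb R^3$), so such an asymptotic would imply unboundedness on $L^2$ as well, contradicting the known $L^2$-isometry of the wave operators and the $L^p$-boundedness of Goldberg--Green. What your asymptotic expansion is missing is the additional decay in $\big||x|\pm|y|\big|$ that the $\lambda$-oscillation produces: after the $\lambda$-integral the kernel gains factors like $\langle|x|\pm|y|\rangle^{-2}$, and the actual bad piece in the paper has the shape $|x|/(|x|^4-|y|^4)$ (smeared by convolution with $|V|\otimes|V|$), whose Schur integrals diverge only \emph{logarithmically} via $\int|x|^{-3}\,dx$, not like $\int|y|^{-1}\,dy$. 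This is why the paper cannot rely on a crude Schur-test failure; it instead extracts the explicit kernel $K_P$, proves the two representation formulas \eqref{First-fomula}--\eqref{sec-fomula}, and then exhibits concrete test functions $f_1=\chi_{B(0,1)}$ and $f_R=\chi_{B(0,R)}$ for which $T_{K_P}f_1\notin L^1$ and $\|T_{K_P}f_R\|_{L^\infty}\gtrsim\ln R\to\infty$. Finally, the non-vanishing issue you flag as the ``main obstacle'' is a red herring here: once the kernel is reduced to the convolution of $|V|\otimes|V|$ against $|z|\chi_{\{||z|-|w||\ge1\}}/(|z|^4-|w|^4)$, positivity of $|V|$ already guarantees no cancellation, and no delicate spectral-moment argument is needed.
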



\begin{theorem}
\label{theorem_1}
Let $V$ satisfy $|V(x)|\lesssim \<x\>^{-\mu}$ for  some $\mu>11$. Assume also $H$ has no embedded eigenvalue in $(0,\infty)$ and zero is a regular point of $H$. Then $W_\pm,W_\pm^*\in \mathbb{B}\big(L^1(\R^3), L^{1,\infty}(\R^3)\big)$, that is,
\begin{align}\label{Weakesti}
\big|\{x\in \R^3; \ |W_\pm f(x)|\ge\lambda\}\big|&\lesssim \frac{1}{\lambda}\int_{\R^3}|f(x)|dx,\  \lambda>0,
\end{align}
with the analogous estimate for $W_\pm^*$.
\end{theorem}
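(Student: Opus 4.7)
The plan is to realize $W_\pm$ as singular integral operators satisfying Calder\'on--Zygmund-type estimates on a homogeneous space $(\R^3,d\omega)$ with a suitably chosen doubling measure $d\omega$, and then invoke the abstract weak-type $(1,1)$ theory for such operators. The starting point is the stationary representation
\begin{align*}
W_\pm = I \mp \frac{2}{\pi i}\int_0^\infty R_V^\mp(\lambda^4)V\bigl(R_0^+(\lambda^4)-R_0^-(\lambda^4)\bigr)\lambda^3\,d\lambda,
\end{align*}
together with the explicit form of the free resolvent kernel on $\R^3$:
\begin{align*}
R_0^\pm(\lambda^4)(x,y) = \frac{1}{8\pi\lambda^2|x-y|}\bigl(e^{\pm i\lambda|x-y|}-e^{-\lambda|x-y|}\bigr).
\end{align*}
After inserting a smooth cutoff $\chi\in C_0^\infty(\R)$ equal to $1$ near $\lambda=0$, I would split $W_\pm=I+\Omega_\pm^{\mathrm{low}}+\Omega_\pm^{\mathrm{high}}$ and treat each piece separately.

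Next, I expand the high-energy piece $\Omega_\pm^{\mathrm{high}}$ via the Born series
\begin{align*}
R_V^\mp = \sum_{k=0}^{N-1}(-1)^k (R_0^\mp V)^k R_0^\mp + (-R_0^\mp V)^N R_V^\mp,
\end{align*}
truncated at an order $N$ large enough that the remainder is controlled by the limiting absorption principle in weighted $L^2$-spaces; the decay hypothesis $\mu>11$ is what makes this feasible with $N$ finite. Each finite-order term unfolds into a multiple integral in the spectral parameter and in spatial variables $z_1,\dots,z_k$, with integrand an explicit product of free resolvent kernels and factors $V(z_j)$. For $\Omega_\pm^{\mathrm{low}}$, the regular-point hypothesis yields a convergent expansion of $(I+R_0^\pm(\lambda^4)V)^{-1}$ at $\lambda=0$ in suitable weighted spaces, so this piece likewise reduces to a finite sum of integrals with explicit kernels.

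The core analytic task is then to prove, for the resulting integral kernel $K_\pm(x,y)$ of $\Omega_\pm^{\mathrm{low}}+\Omega_\pm^{\mathrm{high}}$, the size estimate
\begin{align*}
|K_\pm(x,y)|\lesssim \frac{1}{\omega(B(x,|x-y|))}
\end{align*}
and the H\"older regularity estimate
\begin{align*}
|K_\pm(x,y)-K_\pm(x,y')|\lesssim \frac{|y-y'|^\delta}{|x-y|^\delta\,\omega(B(x,|x-y|))}\quad\text{for } |x-y|\ge 2|y-y'|,
\end{align*}
along with the symmetric versions needed for $W_\pm^*$. Combined with the $L^2$-boundedness supplied by \cite{GoGr21}, the Calder\'on--Zygmund theorem on $(\R^3,d\omega)$ then yields a weak-type $(1,1)$ bound against $d\omega$, which transfers to Lebesgue measure once $d\omega$ is comparable to $dx$ on unit scales.

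The main obstacle is the choice of $d\omega$ together with the verification of the H\"older estimate. The oscillatory kernel of $R_0^\pm(\lambda^4)$ decays only like $|x-y|^{-1}$ rather than the Euclidean CZ rate $|x-y|^{-3}$, so one must introduce a weight that absorbs the mismatch between the fourth-order scaling of $H$ and standard Euclidean scaling while still remaining doubling. Extracting a H\"older exponent $\delta>0$ uniformly from the $\lambda$-integrals in both the low- and high-energy regimes, and tracking enough derivatives through the Born series under only $\mu>11$ polynomial decay of $V$, is where the principal technical effort will lie.
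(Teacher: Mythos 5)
There is a genuine gap in the proposed approach. After the $\lambda$-integration, the dangerous part of the low-energy kernel (the term coming from the rank-one piece $\lambda\widetilde P$ in the expansion of $M^{-1}(\lambda)$) behaves like $\frac{|x|}{|x|^4-|y|^4}$, so its singularity lives on the sphere $\{|x|=|y|\}$, not at the diagonal $\{x=y\}$. This is fatal for your plan: a Calder\'on--Zygmund size estimate $|K(x,y)|\lesssim 1/\omega(B(x,|x-y|))$ controls behaviour near $x=y$, but here the kernel can be as large as $|x|^{-2}$ at points with $||x|-|y||\sim 1$ and $|x-y|\sim|x|$ (e.g.\ antipodal $y$), which is much bigger than $|x-y|^{-3}\sim|x|^{-3}$. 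No doubling measure on $\R^3$ that remains comparable to Lebesgue on unit scales can repair this, since replacing $dx$ by $d\omega$ only rescales balls, while the obstruction is the codimension-one singular set. Equivalently, $\int|K_P(x,y)|\,dy\sim\log|x|\to\infty$, which is exactly what powers the $L^1/L^\infty$ unboundedness in Theorem \ref{theorem_2}; the operator is genuinely not a CZ operator on $\R^3$ against any reasonable measure.

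The paper resolves this by a dimensional reduction rather than a change of measure: only the single rank-one term $T_{K_P}$ needs the homogeneous-space machinery (the remaining five terms from the expansion of $M^{-1}(\lambda)$ are controlled by Schur-type estimates and Proposition \ref{prop_2_2} and are in fact bounded on $L^1$ and $L^\infty$, which is indispensable for the unboundedness proof). For $T_{K_P}$, after isolating the model kernel $\frac{|x|\chi_{\{||x|-|y||\ge1\}}}{|x|^4-|y|^4}$, one passes to polar coordinates and treats the resulting radial operator as a singular integral on the \emph{one-dimensional} homogeneous space $(\R,r^2\,dr)$, verifying the H\"ormander integral condition there (Stein's theorem for spaces of homogeneous type), rather than a pointwise H\"older condition on $\R^3$. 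A few further remarks: the symmetric resolvent identity $R_V^+V=R_0^+v\,M^{-1}(\lambda)\,v$ (with $v=|V|^{1/2}$) is what makes the needed operator-valued expansion absolutely bounded, and this is what exposes the exceptional $\lambda\widetilde P$ term; a generic Neumann/Born expansion of $(I+R_0^\pm V)^{-1}$ would not isolate it. The high-energy part need not be re-expanded at all; it is already bounded on $L^p$ for all $1\le p\le\infty$ by \cite{GoGr21}.
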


\begin{remark}
By the interpolation and the duality, Theorem \ref{theorem_1} also implies $W_\pm \in\mathbb{B}\big(L^p(\R^3)\big)$ for all $1<p<\infty$, while this is already known due to Goldberg and Green \cite{GoGr21}.
\end{remark}

Finally, we would like to emphasize that the condition of the absence of embedded positive eigenvalues is a fundamental  assumption when studying dispersive estimates and $L^p$-bounds of wave operators for higher-order Schr\"odinger operators. In fact, for any dimension $d\geq 1$, it is relatively straightforward to construct a potential function $V\in C_0^\infty(\mathbb{R}^d)$ such that $H=\Delta^2+V$ has some positive eigenvalues, as demonstrated, for instance, in \cite[Section 7.1]{FSWY}.

On the other hand, it is worth noting that Feng et al. in \cite{FSWY} have proven that $H=\Delta^2 +V$ does not have any positive eigenvalues under the assumption that the potential $V$ is bounded and satisfies the repulsive condition, meaning that $(x\cdot \nabla)V \leq 0$. Additionally, it is well-established, as demonstrated by Kato in \cite{Ka}, that the Sch\"{o}dinger operator $-\Delta +V$ has no positive eigenvalues when the potential is bounded and satisfies the condition $V(x)= o(|x|^{-1})$ as $|x|\rightarrow\infty$.
Consequently, these studies indicate that establishing the absence of positive eigenvalues for fourth-order Schr\"odinger operators is a more intricate task compared to second-order cases when dealing with bounded potential perturbations.


\subsection{Further  backgrounds}
For the classical Schr\"odinger operator $H=-\Delta+V(x)$, since the seminal work \cite{Yajima-JMSJ-95} of Yajima, there exists a great number of interesting works on the $L^p$-boundedness for the wave operators $W_\pm$.  More specifically,  in the space dimension $d=1$, the wave operators $W_\pm$ are bounded on  $L^p(\mathbb{R})$ for $1<p<\infty$ for both regular and zero resonance cases but in general unbounded on $L^p(\mathbb{R})$ for
$p=1,\infty$ (see e.g. \cite{ArYa,DaFa,Weder}).
 In the regular case, for dimension $d=2$ the wave operators $W_\pm$ are bounded on  $L^p(\mathbb{R}^2)$ for $1<p<\infty$ but the result of endpoint is unknown (see \cite{Yajima-CMP-99, Jensen_Yajima_2D}). For dimensions $d\geq3$, the wave operators $W_\pm$ are bounded on $L^p(\mathbb{R}^d)$ for $1\leq p\leq\infty$ in the regular case (see, for example, \cite{BeSc,Yajima-1995,Yajima-JMSJ-95}).  However, the existence of threshold resonances  shrink the range of $p$, which depends on dimension $d$ and the decay properties of zero energy eigenfunctions (see \cite{EGG,Finco_Yajima_II,Goldberg-Green-Advance,Goldberg-Green-Poincare,Jensen_Yajima_4D, Yajima_2006,Yajima_2016,Yajima_2018,Yajima_2021arxiv,Yajima_2022arxiv}).


More recently, there exist several works for the  the $L^p$-boundedness of the wave operators $W_\pm$ for higher order Schr\"odinger operators $H=(-\Delta)^m+V(x)$ especially for $m=2$.  First of all,  Goldberg and Green in \cite{GoGr21} proved that for dimension $d=3$ and $m=2$, the wave operators $W_\pm$ extend to bounded operators on  $L^p(\mathbb{R}^3)$ for $1<p<\infty$  when zero is a regular point (the endpoint case is not mentioned in \cite{GoGr21}).  Then Erdo\u{g}an and Green in \cite{Erdogan-Green21,Erdogan-Green22} further showed  that as $m>1$ and $d>2m$,  $W_\pm$  are bounded on  $L^p(\mathbb{R}^d)$ for $1\leq p\leq\infty$ for certain smooth potentials $V(x)$ in the regular case. Moreover, Erdo\u{g}an, Goldberg and Green in \cite{EGG2}
also obtained that for dimension $d>4m-1$ and $\frac{2d}{d-4m+1}<p\leq\infty$, the $L^ p$ boundedness of the wave operators may fail  for compactly supported continuous potentials if the potential is not sufficiently smooth. In our previous work \cite{MiWYa}, we studied the case $d=1$ and $m=2$ and obtained that whatever zero is a regular point or a resonance of $H$, the wave operators $W_\pm$ are bounded  on $L^p(\mathbb{R})$ for $1<p<\infty$. Moreover, if in addition $V$ is compactly supported, then $W_\pm$ are also bounded from $L^1(\R)$ to $L^{1,\infty}(\R)$. On the other hand, $W_\pm$  are shown to be unbounded  on both $L^1(\mathbb{R})$ and $L^\infty(\mathbb{R})$ at least for the regular case.  More recently, Galtbayar and  Yajima \cite{GY2} have established the $L^p$-estimates of wave operator $W_\pm$ with zero resonances for the case $m=2$ and $d=4$.

In a forthcoming paper \cite{MiWYa2},  the authors  consider all the zero resonance cases for $H=\Delta^2+V$ on $\R^3$ and show that $W_\pm\in \mathbb B(L^p(\R^3))$ for all $1<p<\infty$ in the first kind resonance case.  For the second and third kind resonance cases,  it is shown that  $W_\pm\in \mathbb B(L^p(\R^3))$ for all $1<p<3$  but $W_\pm\notin \mathbb B(L^p(\R^3))$ for any $3\le p\leq\infty$.

\subsection{The ideas of the proof}
Let us explain briefly the idea of the proof.  We begin with the stationary representation of $W_-$:
$$
W_-=I-\frac{2}{\pi i}\int_0^\infty \lambda^3 R_V^+(\lambda^4)V\left(R_0^+(\lambda^4)-R_0^-(\lambda^4)\right)d\lambda,
$$
where $R_0^\pm(\lambda)=(\Delta^2-\lambda\mp i0)^{-1}$ and $R_V^\pm(\lambda)=(H-\lambda\mp i0)^{-1}$ are the free and perturbed limiting resolvents, respectively. Since the high energy part is already known to be bounded on $L^p$ for all $1\le p\le \infty$ by \cite{GoGr21}, it is enough to deal with the low energy part
$$
W_-^L:=\int_0^\infty \lambda^3 \chi(\lambda) R_V^+(\lambda^4)V\left(R_0^+(\lambda^4)-R_0^-(\lambda^4)\right)d\lambda,
$$
with $\supp\chi\subset[-\lambda_0,\lambda_0]$ and $\lambda_0\ll1$. To regard $W_-^L$ as an (singular) integral operator, we then use the asymptotic expansion of $R_V^+(\lambda^4)V$ near $\lambda=0$.
 Note that the integral kernel of $R_0^\pm(\lambda^4)$ is explicit (see \eqref{free_resolvent}). In \cite{GoGr21}, Goldberg and Green used the expansion
\begin{align}
\label{idea_0}
R_V^+(\lambda^4)V=R_0^+(\lambda^4)v\{QA_0Q+\lambda A_1+\Gamma_2(\lambda)\}v,\quad v=|V|^{1/2},
\end{align}
where $Q=I-P$, $P=\|V\|_{L^1}^{-1}\<\cdot,v\>v$,  $A_0,A_1\in \mathbb B(L^2)$,  and $\Gamma_k(\lambda)$ denotes a $\lambda$-dependent absolutely bounded operator on $L^2$ such that $\sum_{\ell=0}^k \big\||\lambda^{\ell}\partial_\lambda^\ell \Gamma_k(\lambda)|\big\|_{L^2\to L^2}\lesssim \lambda^{k},\ 0<\lambda\le\lambda_0.$

 This formula was enough for $1<p<\infty$, while this is not the case for $p=1,\infty$ not only for the unboundedness, but also for the weak {\it (1,1)} estimate. Hence, we compute the RHS of \eqref{idea_0} more precisely to obtain
\begin{align}
\label{idea_1}
R_V^+(\lambda^4)V=R_0^+(\lambda^4)v\Big\{QA_{0}Q+\lambda\left(QA_{1,0}+A_{0,1}Q\right)
+\lambda \widetilde P+\lambda^2A_2+\Gamma_3(\lambda)\Big\}v,
\end{align}
where $A_{1,0},A_{0,1},A_2\in \mathbb B(L^2)$ and $\widetilde P=cP$ with some constant $c$. To ensure this expansion make sense, we need the condition $|V(x)|\lesssim \<x\>^{-\mu}$ with $\mu>11$.

By \eqref{idea_1}, $W_-^L$ can be written as a sum of associated six integral operators. Moreover, using the explicit formula \eqref{free_resolvent} of  $R_0^\pm(\lambda^4)$ and the cancellation property $\int Qv(x)dx=0$, we can categorize such six operators into three classes (I)--(III), where (I) is associated with $QA_{0}Q,\lambda (QA_{1,0}+A_{0,1}Q)$ and $\lambda^2A_2$, (II) with $\lambda\widetilde P$ and (III) with $\Gamma_3(\lambda)$, respectively.

The operators in the class (I) can be shown to be bounded on $L^p(\R^3)$  for all $1\le p\le\infty$. Indeed, thanks to the translation invariance of $L^p$-norms and Minkowski's integral inequality  (see e.g. \eqref{K'_11}), the proof can be reduced to deal with an integral operator with the kernel bounded by
\begin{align}
\label{idea_3}
\min\big\{ \<x\>^{-1}\<y\>^{-1}\big\langle|x|\pm|y|\big\rangle^{-2}, \ \big\langle|x|\pm|y|\big\rangle^{-4}\big\}.
\end{align}
Although classical Schur's test cannot be applied to this case, separating it into three regions $|x|\sim |y|$, $|x|\gg|y|$ and $|x|\ll |y|$, we can show it is bounded on $L^p(\R^3)$ for all $1\le p\le\infty$. For the class (III), we can apply Schur's test directly to obtain the $L^p$-boundedness for all $1\le p\le\infty$. We  would emphasize  that the strong $L^1$ and $L^\infty$ boundedness for the classes (I) and (III) are necessary to achieve the unboundedness of the full operator $W_-^L$ on $L^1$ and $L^\infty$.

For the class (II),  we show that  the operator associated with $\lambda\widetilde P$ and its adjoint are bounded from $L^1(\R^3)$ to $L^{1,\infty}(\R^3)$. To explain the main idea of this result, let us consider the following model kernel
\begin{align}
\label{idea_4}
K=\frac{|x|}{|x|^4-|y|^4}=\frac{1}{2|x|(|x|^2+|y|^2)}+\frac{1}{4|x|^2(|x|+|y|)}+\frac{1}{4|x|^2(|x|-|y|)}=:\sum_{j=1}^3 K_j,
\end{align}
restricted on the region $\{(x,y)\ |\ ||x|-|y||\ge1\}$. Note that $T_{K_1},T_{K_2}\in \mathbb B(L^1(\R^3),L^{1,\infty}(\R^3))$ since $K_1,K_2$ are dominated by $ |x|^{-3}\in L^{1,\infty}(\R^3)$. To deal with $T_{K_3}$, we use the polar coordinate to rewrite $T_{K_3}f(x)$ as the following weighted 1D singular integral:
$$
T_{K_3}f(x)=\int_0^\infty \frac{g(r)}{4|x|^2(|x|-r)}\ \chi_{\{||x|-r|\ge 1\}}\ r^2dr,\quad g(r)=\int_{S^2}f(r\omega) d \omega.
$$
We then use the theory of general C-Z singular integrals on the homogeneous space to obtain that $T_{K_3}\in \mathbb B(L^1(\R^3),L^{1,\infty}(\R^3))$.

Let us emphasize that $K$ is just a model kernel and the integral kernel $K_P$ associated with $\lambda\widetilde P$ is in fact much more complicated.   Indeed, we will show the following two different expressions:
\begin{align}
K_P(x,y)
&=-\frac{1+i}{4\pi}G(x)\Big(\frac{|x|\chi_{\{||x|-|y||\ge1\}}}{|x|^4-|y|^4}\Big)G(y)+O\Big(\frac{1}{\<x\>\<y\>\big\langle|x|-|y|\big\rangle^2}\Big)\label{First-fomula}\\
&=\frac{1}{8\pi(1+i)\|V\|^2_{L^1}}\int_{\R^6}v^2(u_1)v^2(u_2) \widetilde{K}_P(x-u_1,y-u_2)du_1du_2,\label{sec-fomula}
\end{align}
where $$G(x)=\frac{|x|}{\|V\|_{L^1}}\Big(\int_{\R^3}\frac{|V|(u)}{|x-u|}du\Big),\quad \widetilde{K}_P(z,w)=\frac{-4i|z|\chi_{\{||z|-|w||\ge1\}}}{|z|^4-|w|^4}+\Psi(z,w),$$ and $T_{\Psi}\in \mathbb B(L^p)$ for all $1\le p\le\infty$. The former equality \eqref{First-fomula}  is used for proving the weak {\it (1,1)} estimate and the latter one \eqref{sec-fomula} for the unboundedness on $L^1$ and $L^\infty$. In particular, for the unboundedness, we utilize the assumption that $\supp V\subset \{|x|\le R_0\}$ with some $R_0$ and take  characteristic functions $ f_1(y)=\chi_{\{|y|\le 1\}}$ and  $f_R(y)=\chi_{\{|y|\le R\}}$ with $R\gg R_0$ to somehow estimate $\int_{\R^3}|T_{K_P}f_1|dx$  and  $|(T_{K_P}f_R)(x)|$, respectively,  then we  show that  $T_{K_P}f_1\notin L^1(\R^3)$  and   $\norm{T_{K_P}f_R}_{L^\infty(\R^3)}\to \infty$ as $R\to \infty$,
which implies the desired unboundedness of $W_\pm$ on $L^1(\R^3)$ and $L^\infty(\R^3)$.

%
%

\subsection{Some notations}
\label{subsection_notation}
Some notations used in  the paper are listed as follows:
\begin{itemize}
\item $A\lesssim B$ (resp. $A\gtrsim B$) means $A\le CB$ (resp. $A\ge CB$) with some constant $C>0$.
\item $L^p=L^p(\R^n),L^{1,\infty}=L^{1,\infty}(\R^n)$ denote the Lebesgue and weak $L^1$ spaces, respectively.
\item For $w\in L^1_{\loc}(\R^n)$ positive almost everywhere and $1\le p<\infty$, $L^p(w)=L^p(\R^n,wdx)$ denotes the weighted $L^p$-space with the norm
$$
\norm{f}_{L^p(w)}=\left(\int |f(x)|^pw(x)dx\right)^{1/p}.
$$
 Set $$w(E):=\int_E w(x)dx, \   \text{for each Borel subset}\  E\subset\R^n.$$  Denote $L^{1,\infty}(w)$ as  the weighted weak $L^1$ space with the quasi-norm
$$
\norm{f}_{L^{1,\infty}(w)}=\sup_{\lambda>0} \lambda w(\{x\ |\ |f(x)|>\lambda\}).
$$

\item Let $\{\varphi_N\}_{N\in \Z}$ be a homogeneous dyadic partition of unity on $(0,\infty)$: $\varphi_0\in C_0^\infty(\R_+)$, $0\le \varphi\le1$, $\supp \varphi\subset[\frac14,1]$, $\varphi_N(\lambda)=\varphi_0(2^{-N}\lambda)$, $\supp \varphi_N\subset [2^{N-2},2^N]$  and
$$
\sum_{N\in \Z}\varphi_N(\lambda)=1,\quad\lambda>0.
$$

\end{itemize}

\section{Some integrals related with wave operators}
\label{subsection_integral_operators}

In this section, we prepare some basic criterions to the boundedness of integral operators  related with the wave operators $W_\pm$.  Throughout the paper,  we always use $T_K$ to denote the integral operator defined by the kernel $K(x,y)$:
\begin{equation}
	\label{integral}
T_Kf(x)=\int_{\R^3} K(x,y)f(y)dy.
\end{equation}
Moreover, we say that the kernel $K(x,y)$ of an operator $T_K$ is called admissible if it satisfies
\begin{align*}
\sup_{x\in \R^3} \int_{\R^3}|K(x,y)|dy+\sup_{y\in \R^3} \int_{\R^3}|K(x,y)|dx<\infty.
\end{align*}
Let's first recall  of the classical Schur test lemma:
\begin{lemma}
\label{lemma_2_1}
$T_K\in \mathbb B\big(L^p(\R^3)\big)$ for all $1\le p\le \infty$ if its kernel $K(x,y)$ is admissible.
\end{lemma}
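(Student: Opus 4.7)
The plan is to handle the endpoints $p=1$ and $p=\infty$ by direct computation using Fubini's theorem, and then obtain the intermediate cases $1<p<\infty$ either by Riesz--Thorin interpolation or, more elegantly, by a single Hölder argument that covers all $p\in[1,\infty]$ simultaneously.

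First I would verify the case $p=\infty$: given $f\in L^\infty(\R^3)$ and the admissibility constant $M_1:=\sup_x\int_{\R^3}|K(x,y)|dy<\infty$, pulling the sup outside the integral immediately gives $|T_Kf(x)|\le M_1\|f\|_{L^\infty}$ pointwise. Next, for the case $p=1$, I would use Tonelli's theorem to swap the integrals in $\|T_Kf\|_{L^1}\le\int\!\!\int|K(x,y)||f(y)|\,dy\,dx$ and then bound the inner $x$-integral by $M_2:=\sup_y\int_{\R^3}|K(x,y)|dx$, yielding $\|T_Kf\|_{L^1}\le M_2\|f\|_{L^1}$.

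For the intermediate range, the cleanest route is to split $|K(x,y)|=|K(x,y)|^{1/p'}|K(x,y)|^{1/p}$ with $1/p+1/p'=1$ and apply Hölder's inequality in $y$ to get
\[
|T_Kf(x)|\le\Bigl(\int_{\R^3}|K(x,y)|\,dy\Bigr)^{1/p'}\Bigl(\int_{\R^3}|K(x,y)||f(y)|^p\,dy\Bigr)^{1/p}\le M_1^{1/p'}\Bigl(\int_{\R^3}|K(x,y)||f(y)|^p\,dy\Bigr)^{1/p}.
\]
Raising to the $p$-th power, integrating in $x$, and swapping the order of integration to absorb a factor of $M_2$ then produces $\|T_Kf\|_{L^p}\le M_1^{1/p'}M_2^{1/p}\|f\|_{L^p}$, which simultaneously recovers both endpoints by interpreting $1/\infty=0$.

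There is essentially no obstacle here: admissibility is exactly the hypothesis tailored to make the Hölder splitting work, and everything reduces to Fubini/Tonelli, which is legitimate because $|K|\,|f|$ is nonnegative and measurable. The only care needed is to record the explicit operator norm bound $\|T_K\|_{L^p\to L^p}\le M_1^{1/p'}M_2^{1/p}$ so that it can be invoked quantitatively in later sections where the kernels arising from the expansion \eqref{idea_1} are controlled by admissible majorants.
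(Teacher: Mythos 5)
Your proof is correct and is exactly the classical Schur-test argument: direct estimates at $p=1,\infty$ and the Hölder splitting $|K|=|K|^{1/p'}|K|^{1/p}$ for the intermediate range; the paper simply recalls Lemma~\ref{lemma_2_1} as a known fact without supplying a proof, and your argument is the standard one it implicitly invokes, including the explicit bound $\norm{T_K}_{L^p\to L^p}\le M_1^{1/p'}M_2^{1/p}$.
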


Next,  the following proposition is crucial to the $L^p$-boundedness of wave operators $W_\pm$.

\begin{proposition}
\label{prop_2_2}
Let the kernel $K(x, y)$  satisfy the following condition:
\begin{equation}
\label{kernel type-1}
|K(x,y)|\lesssim \<x\>^{-1}\<y\>^{-1}\big{\langle}|x|-|y|\big{\rangle}^{-2}, \ \  (x,y)\in \R^3\times\R^3.
\end{equation}
Then $T_{K}\in \mathbb B\big(L^1(\R^3),L^{1,\infty}(\R^3)\big)\cap \mathbb{B}\big(L^p(\R^3)\big)$ for $1< p<\infty$. That is
\begin{align}\label{Strong-estimate}
\norm{T_Kf}_{L^p(\R^3)}&\lesssim \norm{f}_{L^p(\R^3)},\ \, 1<p<\infty,\\
\label{Weak-estimate}
\big|\{x\in \R^3; \ |(T_Kf)(x)|\ge\lambda\}\big|&\lesssim \frac{1}{\lambda}\int_{\R^3}|f(x)|dx,\ \ \lambda>0.
\end{align}
Moreover, if there exists $\delta>0$ such that $K(x,y)$ further satisfies  one of the following two conditions
\begin{align}
\label{kernel type-2}
|K(x,y)|&\lesssim \<x\>^{-1}\<y\>^{-1}\big{\langle}|x|-|y|\big{\rangle}^{-2-\delta},\\
\label{kernel type-3}
|K(x,y)|&\lesssim \min\Big\{ \<x\>^{-1}\<y\>^{-1}\big{\langle}|x|-|y|\big{\rangle}^{-2}, \big{\langle}|x|-|y|\big{\rangle}^{-3-\delta}\Big\},
\end{align}
then $T_K\in  \mathbb{B}\big(L^p(\R^3)\big)$ for all $1\le p\le\infty$.
\end{proposition}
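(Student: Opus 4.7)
The approach is two--pronged. For the weak $(1,1)$ and strong $L^p$ ($1<p<\infty$) bounds \eqref{Strong-estimate}--\eqref{Weak-estimate}, I reduce matters to a one--dimensional integral operator on the space of homogeneous type $X=(0,\infty)$ equipped with the doubling measure $d\omega(r)=r^2\,dr$, and then invoke the Calder\'on--Zygmund theorem on such spaces. For the endpoint $L^1$ and $L^\infty$ claims under the extra--decay hypotheses \eqref{kernel type-2} or \eqref{kernel type-3}, a direct region decomposition of $\R^3\times\R^3$ combined with Lemma~\ref{lemma_2_1} will suffice.

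Since the majorant in \eqref{kernel type-1} depends on $x,y$ only through $|x|$ and $|y|$, setting $g(r)=\int_{S^2}|f(r\sigma)|\,d\sigma$ gives $|T_Kf(x)|\le F(|x|)$ where
$$
F(\rho)=\int_0^\infty k(\rho,r)g(r)\,d\omega(r),\qquad k(\rho,r)=\langle\rho\rangle^{-1}\langle r\rangle^{-1}\langle\rho-r\rangle^{-2}.
$$
Passage to polar coordinates yields $\|F(|\cdot|)\|_{L^p(\R^3)}\asymp\|F\|_{L^p(X,d\omega)}$ and $\|g\|_{L^p(X,d\omega)}\lesssim\|f\|_{L^p(\R^3)}$ (with equality at $p=1$), and analogous comparisons hold for the weak--$L^1$ quasi--norm, so it is enough to show that $S\colon g\mapsto F$ is bounded on $L^p(X,d\omega)$ for $1<p<\infty$ and weak $(1,1)$ on $(X,d\omega)$. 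A direct computation gives $\omega(B(\rho,R))\asymp R(\rho+R)^2$, so $d\omega$ is doubling; the size condition $|k(\rho,r)|\lesssim 1/\omega(B(\rho,|\rho-r|))$ follows by case analysis according as $|\rho-r|$ is larger or smaller than $1$ and than $\max(\rho,r)$; and the pointwise derivative bound
$$
|\partial_r k(\rho,r)|\lesssim \langle\rho\rangle^{-1}\langle r\rangle^{-1}\langle\rho-r\rangle^{-2}\bigl(\langle r\rangle^{-1}+\langle\rho-r\rangle^{-1}\bigr),
$$
together with the mean value theorem, yields the H\"ormander--type smoothness condition. The $L^2(X,d\omega)$ boundedness of $S$ is obtained from Schur's test with the weight $w(\rho)=\langle\rho\rangle^\alpha$ for any $\alpha\in(1,3)$, the upper bound $\alpha<3$ being what makes the $r\gg\rho$ tail integrable. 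The Calder\'on--Zygmund theorem on spaces of homogeneous type then gives weak $(1,1)$ and $L^p$ bounds for $S$, which transfer back to $T_K$ through the polar reduction.

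Under \eqref{kernel type-2}, the extra $\langle|x|-|y|\rangle^{-\delta}$ factor makes $\int_0^\infty\langle r\rangle^{-1}\langle|x|-r\rangle^{-2-\delta}r^2\,dr$ uniformly bounded in $|x|$, so the kernel is admissible and Lemma~\ref{lemma_2_1} applies. Under \eqref{kernel type-3} I decompose $\R^3\times\R^3$ into three regions $R_1=\{\,|y|/2\le|x|\le 2|y|\,\}$, $R_2=\{|x|\ge 2|y|\}$ and $R_3=\{|y|\ge 2|x|\}$: on $R_1$ the first bound collapses to $\lesssim\langle x\rangle^{-2}\langle|x|-|y|\rangle^{-2}$, which is admissible by the radial Schur computation, while on $R_2\cup R_3$ one has $\langle|x|-|y|\rangle\asymp\max(|x|,|y|)$, so the second bound is $\lesssim\max(|x|,|y|)^{-3-\delta}$ and is admissible as well. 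The principal technical hurdle is the careful three--scale bookkeeping required to verify the H\"ormander smoothness condition of $k$ (in particular through the transition between the near--diagonal $|\rho-r|\lesssim 1$ regime and the far--diagonal one); once that is in hand, the Calder\'on--Zygmund machinery delivers \eqref{Strong-estimate}--\eqref{Weak-estimate} and the region splitting handles the stronger claims.
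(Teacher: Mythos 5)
There is a genuine gap in your treatment of the weak $(1,1)$ bound. After the polar reduction you propose to apply the Calder\'on--Zygmund theorem on the homogeneous space $X=(0,\infty)$ with $d\omega=r^2\,dr$ to the kernel $k(\rho,r)=\langle\rho\rangle^{-1}\langle r\rangle^{-1}\langle\rho-r\rangle^{-2}$, asserting that the mean value theorem gives the H\"ormander condition
\[
\sup_{r,\bar r:\,|r-\bar r|<\delta}\ \int_{|\rho-r|>2\delta}|k(\rho,r)-k(\rho,\bar r)|\,\rho^2\,d\rho<\infty.
\]
This fails. Take $r_0\ge 1$, $\delta<1/2$, $r=r_0$, $\bar r=r_0+\delta$. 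For $\rho\gg r_0$ one has $\langle\rho-r\rangle,\langle\rho-\bar r\rangle\asymp\rho$, so
\[
k(\rho,r)-k(\rho,\bar r)=\frac{1}{\langle\rho\rangle}\Big[\frac{1}{\langle r\rangle\langle\rho-r\rangle^2}-\frac{1}{\langle\bar r\rangle\langle\rho-\bar r\rangle^2}\Big]\ \approx\ \frac{1}{\rho}\cdot\Big(\frac{1}{\langle r\rangle}-\frac{1}{\langle\bar r\rangle}\Big)\cdot\frac{1}{\rho^2}\ \asymp\ \frac{\delta}{r_0^2\,\rho^3},
\]
and the correction coming from $\langle\rho-r\rangle^{-2}-\langle\rho-\bar r\rangle^{-2}$ is of lower order, $O(\delta/\rho^4)$, so it does not cancel the main term. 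Integrating against $\rho^2\,d\rho$ over $\rho>10r_0$ produces $\int_{10r_0}^\infty \delta r_0^{-2}\rho^{-1}\,d\rho=\infty$. The failure is structural: the $\langle\rho\rangle^{-1}\langle r\rangle^{-1}$ prefactor decays too slowly to tame the $\rho^2\,d\rho$ measure in the off-diagonal tail. This is precisely the difference from the kernel $\frac{|x|}{|x|^4-|y|^4}$ in Proposition~\ref{prop_2_3} of the paper, whose radial reduction has a $1/s^2$ that exactly cancels the measure $s^2\,ds$, leaving a genuine 1D Hilbert kernel; no such cancellation is available here, and H\"ormander simply does not hold. (A side remark: your proposed Schur weight $\langle\rho\rangle^\alpha$ with $\alpha\in(1,3)$ for the $L^2(X,d\omega)$ bound has the wrong sign --- the tail $r\gg\rho$ then gives $\int r^{\alpha-1}\,dr=\infty$; one must take $\langle\rho\rangle^{-\alpha}$ with $\alpha\in(0,2)$.)

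The paper's proof of Proposition~\ref{prop_2_2} avoids Calder\'on--Zygmund theory entirely. It splits $K=K_1+K_2+K_3$ according to $|y|\asymp|x|$, $|y|<\tfrac12|x|$, $|y|>2|x|$. The near-diagonal $K_1$ is handled by Schur's test on $\R^3$ (the restriction $|y|\asymp|x|$ makes the $y$-integral finite). For $K_2$, one has the pointwise bound $|T_{K_2}f(x)|\lesssim\langle x\rangle^{-3}\|f\|_{L^1}$, and $\langle x\rangle^{-3}\in L^{1,\infty}(\R^3)$ directly gives $T_{K_2}\in\mathbb B(L^1,L^{1,\infty})$; $K_3$ follows by duality. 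That is the entire weak $(1,1)$ argument, and it is both shorter and more robust than a CZ reduction. To salvage your route one would in effect have to strip off the $|x|\gg|y|$ and $|x|\ll|y|$ tails first (exactly the paper's $K_2$ and $K_3$) before applying the homogeneous-space CZ machinery to the remaining near-diagonal piece --- but then the CZ theorem is no longer needed there either, as plain Schur does it. Your region decomposition for the second part of the statement (under \eqref{kernel type-2} and \eqref{kernel type-3}) is fine and matches the paper in substance.
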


\begin{proof}
Firstly, we decompose $K(x,y)$ as
\begin{align*}
K(x,y)
&=K(x,y)\big(\chi_{\{\frac{1}{2}|x|\le|y|\le 2|x|\}}+\chi_{\{|y|<\frac{1}{2}|x|\}}+\chi_{\{|y|>2|x|\}}\big)\\&=: K_{1}(x,y) +K_{2}(x,y)+ K_{3}(x,y),
\end{align*}
and denote $T_{K_i}$ as the integral operators associated with the kernels $K_i(x,y)$ for $i=1,2,3$. Using  \eqref{kernel type-1}, we have
\begin{align*}
\int_{\R^3}|K_1(x,y)|dy &\lesssim\frac{1}{\<x\>^2}\int_{\frac{1}{2}|x|\le|y|\le 2|x|} \big{\langle}|x|-|y|\big{\rangle}^{-2}\  dy\\
&\lesssim \frac{|x|^2}{\<x\>^2}\int_{\frac{1}{2}|x|}^{ 2|x|} \big{\langle}|x|-r\big{\rangle}^{-2}\ dr \lesssim \int_{-\infty}^{+\infty}\<r\>^{-2}\  dr\lesssim1,
\end{align*}
uniformly in $x\in \R^3$. Similarly, we also have
\begin{align*}
\int_{\R^3}|K_1(x,y)|dx &\lesssim\frac{1}{\<y\>^2}\int_{\frac{1}{2}|y|\le|x|\le 2|y|} \big{\langle}|x|-|y|\big{\rangle}^{-2} dx \lesssim1,
\end{align*}
uniformly in $y\in \R^3$.  Hence by Schur's test, we  conclude that $T_{K_1}\in  \mathbb{B}\big(L^p(\R^3)\big)$ for all $1\le p\le\infty$.

Now consider the  integral operator $T_{K_2}$.  Note that
 $$|K_2(x,y)|\lesssim  \<x\>^{-1}\<y\>^{-1}\big{\langle}|x|-|y|\big{\rangle}^{-2}\ \chi_{\{|y|\le \frac{1}{2}|x| \}}.$$
 Then for $f\in L^\infty(\R^3)$, we have
  \begin{align*}
  |T_{K_2}f(x)|&\lesssim \Big( \int_{|y|\le \frac{1}{2}|x|}\ \<x\>^{-1}\<y\>^{-1}\big{\langle}|x|-|y|\big{\rangle}^{-2}\ dy\Big)\ \|f\|_{L^\infty(\R^3)}\\
  &\lesssim \frac{1}{\<x\>^3}\Big( \int_{|y|\le \frac{1}{2}|x|}\<y\>^{-1}dy\Big)\ \|f\|_{L^\infty}\lesssim \|f\|_{L^\infty(\R^3)},
 \end{align*}
 which yields $T_{K_2}\in \mathbb{B}\big(L^\infty(\R^3)\big) $. On the other hand, if $f\in L^1(\R^3)$, then
 \begin{align}\label{T_k2}
  |T_{K_2}f(x)| \lesssim \<x\>^{-3}\Big( \int_{|y|\le \frac{1}{2}|x|}\<y\>^{-1}|f(y)|\ dy\Big)\le \<x\>^{-3}\ \|f\|_{L^1(\R^3)},
 \end{align}
which leads to $T_{K_2}\in \mathbb{B}(L^{1}, L^{1,\infty})$ due to $\<x\>^{-3}\in L^{1,\infty}(\R^3)$. By the Marcinkiewicz interpolation (see e.g.  Grafakos \cite[page 34]{Grafakos_Classical_III}), we obtain $T_{K_2}\in \mathbb{B}\big(L^1(\R^3),L^{1,\infty}(\R^3)\big)\cap \mathbb{B}\big(L^p(\R^3)\big)$ for all $1< p\le\infty$.

Next we deal with the third integral operator $T_{K_3}$.   Clearly, $T^*_{K_3}=T_{K^*_3}$  with
$$|K^*_3(x,y)|=|\overline{K_3(y,x)}| \lesssim  \<x\>^{-1}\<y\>^{-1}\big{\langle}|x|-|y|\big{\rangle}^{-2}\ \chi_{\{|y|\le \frac{1}{2}|x| \}}.$$
By the same argument as in  $T_{K_2}$, one has  $T_{K^*_3}\in \mathbb{B}\big(L^1(\R^3),L^{1,\infty}(\R^3)\big) \cap\mathbb{B}\big(L^p(\R^3)\big)$ for all $1< p\le\infty$.  Hence $T_{K_3}\in \mathbb{B}(L^p(\R^3))$ for all $1\le p<\infty$ by
the duality. Combining with the boundedness of $T_{K_j}$ for $j=1,2,3$, we conclude $T_K\in \mathbb{B}\big(L^1(\R^3),L^{1,\infty}(\R^3)\big) \cap\mathbb{B}\big(L^p(\R^3)\big)$ for all $1< p<\infty$ as desired.

Finally, we shall show $T_K\in  \mathbb{B}\big(L^p(\R^3)\big)$ for all $1\le p\le\infty$ under the conditions \eqref{kernel type-2} or \eqref{kernel type-3}. By the above argument, it suffices to show $T_{K_2}\in \mathbb{B}\big(L^1(\R^3)\big)$. If \eqref{kernel type-2} holds, then for any $f\in L^1(\R^3)$,
\begin{align*}
  \int_{\R^3}|T_{K_2}f(x)|dx &\lesssim \int_{\R^3} \Big( \int_{|y|\le \frac{1}{2}|x|} \<x\>^{-1}\<y\>^{-1}\big{\langle}|x|-|y|\big{\rangle}^{-2-\delta}|f(y)|\ dy\Big)\ dx\\
  &\lesssim \Big( \int_{\R^3}\<x\>^{-3-\delta}dx\Big)\ \|f\|_{L^1}\lesssim \|f\|_{L^1(\R^3)}.
 \end{align*}
That is, $T_{K_2}\in \mathbb{B}\big(L^1(\R^3)\big)$.  If \eqref{kernel type-3} holds, then
$$|K_2(x,y)|\lesssim\big{\langle}|x|-|y|\big{\rangle}^{-3-\delta}\ \chi_{\{|y|< \frac{1}{2}|x| \}},$$ for some $ \delta>0.$
Hence again, we can obtain from the \eqref{T_k2} that
$$ \int_{\R^3}|T_{K_2}f(x)|dx\lesssim \Big( \int_{\R^3}\<x\>^{-3-\delta}dx\Big)\Big(\int_{|y|<\frac{1}{2}|x|}|f(y)|dy\Big)\lesssim \|f\|_{L^1(\R^3)}.$$
Thus the whole proof of Proposition \ref{prop_2_2} has been finished.\end{proof}

\begin{remark}
	In  Proposition \ref{prop_2_2},  under the condition \eqref{kernel type-1}, the strong estimates \eqref{Strong-estimate} of  $T_K$ have been obtained  by Goldberg and Green \cite[Lemma 2.1]{GoGr21}   using  a different argument from one above.   We also remark that the weak estimate \eqref{Weak-estimate} of  $T_K$ seems to be new.
	\end{remark}


As is seen in Section \ref{section_low} below, Proposition \ref{prop_2_2} is not enough to prove Theorem \ref{theorem_1} and we  need to study some integral operators $T_K$ with kernels like $ K(x,y)=\frac{|x|}{|x|^4-|y|^{4}}$. To establish the $L^p$ boundedness of such an operator $T_K$, we will make use of the theory of Calde\'ron--Zygmund on the $A_p$-weighted spaces and on homogeneous spaces with  doubling measures. Although the proof of the following proposition is reduced to the  Calder\'on--Zygmund theory of singular integrals,  the kernel $\frac{|x|}{|x|^4-|y|^4}$  is not a standard Calder\'on--Zygmund kernel of $\R^3$, e.g. see Grafakos \cite[Page 359]{Grafakos_Classical_III}.

\begin{proposition}
\label{prop_2_3}
Let $T_K$ be the integral operator with the  following truncated kernel
\begin{align*}
K(x,y):=\frac{|x|\,\chi_{\{||x|-|y||\ge 1\}}}{|x|^4-|y|^4},\ \  (x,y)\in \R^3\times \R^3.
\end{align*}
Then the operator $T_K,T_K^*\in \mathbb{B}\big(L^1(\R^3),L^{1,\infty}(\R^3)\big)\cap \mathbb{B}\big(L^p(\R^3)\big)$ for all $1< p<\infty$.  \end{proposition}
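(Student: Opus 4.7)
The strategy is to exploit the rotational symmetry of $K$ by reducing $T_K$ to a one-dimensional operator on $(0,\infty)$ equipped with the measure $d\omega(r):=r^2\,dr$, and to recognise this operator as a Calder\'on--Zygmund operator on the space of homogeneous type $(X,d,d\omega)$ with $X=(0,\infty)$ and $d(s,r):=|s-r|$. Since $K(x,y)$ depends only on $s:=|x|$ and $r:=|y|$, polar coordinates give $(T_Kf)(x)=\mathcal Tg(s)$ with $g(r):=\int_{S^2}f(r\omega)\,d\omega$ and
\[
\mathcal Tg(s)=\int_0^\infty\mathcal K(s,r)\,g(r)\,r^2\,dr,\qquad \mathcal K(s,r):=\frac{s\,\chi_{\{|s-r|\ge 1\}}}{s^4-r^4}.
\]
Jensen's inequality yields $\|g\|_{L^p(d\omega)}\le(4\pi)^{1/p'}\|f\|_{L^p(\R^3)}$ and $\|T_Kf\|_{L^p(\R^3)}=(4\pi)^{1/p}\|\mathcal Tg\|_{L^p(d\omega)}$, together with the analogous weak-type identity, so the proposition for $T_K$ reduces to showing that $\mathcal T\in\mathbb B(L^p(d\omega))$ for $1<p<\infty$ and $\mathcal T\in\mathbb B(L^1(d\omega),L^{1,\infty}(d\omega))$.

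The measure satisfies $\omega(B(s,\rho))\sim s^2\rho+\rho^3$, so $(X,d,d\omega)$ is doubling. Using the factorisation $s^4-r^4=(s-r)(s+r)(s^2+r^2)$ and separately handling the regimes $r\ll s$, $r\gg s$, and $r\sim s$, one obtains the size bound $|\mathcal K(s,r)|\lesssim 1/\omega(B(s,|s-r|))$; the same case analysis, applied to $\partial_s[s/(s^4-r^4)]=-(3s^4+r^4)/(s^4-r^4)^2$, delivers the gradient estimate $|\partial_s\mathcal K(s,r)|\lesssim 1/(|s-r|\,\omega(B(s,|s-r|)))$ away from $|s-r|=1$, which by the mean value theorem and doubling integrates to the H\"ormander integral condition on $(X,d,d\omega)$; the distributional jump at $|s-r|=1$ contributes only an absolutely bounded remainder.

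For $L^2(d\omega)$-boundedness I would compare $\mathcal T$ with its dilation-invariant untruncated principal-value version $Sg(s):=\mathrm{p.v.}\int_0^\infty \frac{sr^2}{s^4-r^4}g(r)\,dr$. The substitution $r=st$ gives $Sg(s)=\mathrm{p.v.}\int_0^\infty \frac{t^2}{1-t^4}g(st)\,dt$, so under the isometry $G(u):=g(e^u)e^{3u/2}$ from $L^2(d\omega)$ onto $L^2(du)$, $S$ is conjugate to translation-invariant convolution on $\R$ against $\tilde\phi(\tau):=e^{3\tau/2}/(1-e^{4\tau})$; this kernel has a Hilbert-type pole $\sim-1/(4\tau)$ at $\tau=0$ and exponential decay at $\pm\infty$, so its Fourier transform is bounded and $S\in\mathbb B(L^2(d\omega))$. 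The difference $S-\mathcal T$ is supported on $\{|s-r|<1\}$, where the expansion $sr^2/(s^4-r^4)=-1/(4(r-s))+O(1)$ shows it is the sum of a locally truncated Hilbert transform and a bounded kernel, both bounded on $L^2(d\omega)$ by a direct Schur-type estimate (using $r\sim s$ on the support). Coifman--Weiss theory on $(X,d,d\omega)$ then yields $\mathcal T\in\mathbb B(L^p(d\omega))$ for $1<p<\infty$ and $\mathcal T\in\mathbb B(L^1(d\omega),L^{1,\infty}(d\omega))$, which transfer to $T_K$ on $\R^3$ via the identities of Step~1.

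The adjoint $T_K^*$ has kernel $-|y|\chi_{\{|s-r|\ge 1\}}/(s^4-r^4)$, and the same polar reduction produces a 1D kernel $-r\chi_{\{|s-r|\ge 1\}}/(s^4-r^4)$ satisfying the identical size and smoothness estimates on $(X,d,d\omega)$; its dilation-invariant untruncated version corresponds in log coordinates to convolution with $-e^{5\tau/2}/(1-e^{4\tau})$, again a Hilbert-type kernel with exponential decay, so the previous argument applies verbatim and $T_K^*$ enjoys the same bounds. The main obstacle is the $L^2$-analysis in Step~3: the Mellin symbol of $S$ is a genuine principal-value integral which must be interpreted and estimated carefully (in closed form via residues, for example), and the truncation $\chi_{\{|s-r|\ge 1\}}$ breaks exact dilation invariance, so the local correction $S-\mathcal T$ has to be controlled by hand as above.
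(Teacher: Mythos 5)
Your proposal follows the same overall strategy as the paper — reduce $T_K$ via polar coordinates to a one-dimensional operator on $(0,\infty)$ equipped with the doubling measure $r^2\,dr$, and then invoke Calder\'on--Zygmund theory on a space of homogeneous type to upgrade an a priori $L^{q}$ bound to weak $(1,1)$ — but it differs from the paper in two substantial ways. First, the paper begins with the partial-fraction identity
\[
\frac{|x|}{|x|^4-|y|^4}=\frac{1}{2|x|(|x|^2+|y|^2)}+\frac{1}{4|x|^2(|x|+|y|)}+\frac{1}{4|x|^2(|x|-|y|)},
\]
observes that the first two summands are pointwise dominated by $|x|^{-3}\in L^{1,\infty}(\R^3)$, and applies the CZ machinery only to the third piece, whose one-dimensional kernel $\chi_{\{|s-r|\ge1\}}/(4s^2(s-r))$ is already a near-standard truncated Hilbert transform. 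You skip this decomposition and treat the entire kernel $\mathcal K(s,r)=s\chi_{\{|s-r|\ge1\}}/(s^4-r^4)$ directly, which forces you to verify the size bound $|\mathcal K|\lesssim 1/\omega(B(s,|s-r|))$ and the regularity bound in all three regimes $r\ll s$, $r\sim s$, $r\gg s$; these bounds are correct, but the partial-fraction route is noticeably leaner because the awkward regimes are disposed of by the trivial $|x|^{-3}$ bound. Second, for the a priori $L^q$ bound, you take $q=2$ and pass through the Mellin/logarithmic substitution $G(u)=g(e^u)e^{3u/2}$, whereas the paper deliberately works with $1<q<3/2$ so that the weight $|r|^{2-2q}$ is an $A_q$ weight and one can simply quote the weighted boundedness of the truncated Hilbert transform; this sidesteps the entire $S$ versus $\mathcal T$ comparison. (The paper also has a separate, earlier argument from Goldberg--Green for the full $1<p<\infty$ range, which is logically independent of the weak-type proof.)

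Two points in your write-up need repair. The more serious one is the claim that the localised difference $S-\mathcal T$ is ``bounded on $L^2(d\omega)$ by a direct Schur-type estimate.'' This cannot be right as stated. The leading piece of $S-\mathcal T$ is a truncated Hilbert transform $\mathrm{p.v.}\int_{|s-r|<1}(r-s)^{-1}g(r)\,dr$ up to the weight $r^2$, whose kernel is not absolutely integrable near the diagonal; its $L^2$-boundedness rests on cancellation, not Schur. And even the smooth remainder $E(s,r)=\frac{r^2-2sr-s^2}{4(s+r)(s^2+r^2)}$ on $\{|s-r|<1\}$ fails a naive Schur test in the measure $d\omega$: for large $s$ one has $|E|\sim1/s$ and $\int_{|s-r|<1}|E(s,r)|\,r^2\,dr\sim s\to\infty$. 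It is indeed bounded on $L^2(d\omega)$, but the right way to see this is to pull the weight through, i.e.\ write the operator as $r^2$ times a kernel that is uniformly $O(1/s^3)$ on a width-two strip and use the $L^2(dr)$-boundedness of the unweighted local Hilbert transform plus a local Schur test in $dr$; the ``using $r\sim s$ on the support'' parenthesis points in this direction but does not make the argument, and you should spell it out. The second point is minor: you compute and use $\partial_s\mathcal K$, but the H\"ormander integral condition you quote (Stein's Theorem~1.3, which is what the paper uses) requires regularity in the \emph{second} variable, i.e.\ you should bound $\partial_r\mathcal K(s,r)=4sr^3/(s^4-r^4)^2$. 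The same case analysis gives the identical conclusion $|\partial_r\mathcal K|\lesssim(|s-r|\omega(B(s,|s-r|)))^{-1}$, so this is only a mislabeling — but if you intend to invoke Coifman--Weiss to get the full range $1<p<\infty$ (as your phrasing suggests), you must record both the $\partial_r$ and the $\partial_s$ estimates, since $p>2$ requires the H\"ormander condition for the adjoint. You should also say explicitly how the jump of $\chi_{\{|s-r|\ge1\}}$ enters the H\"ormander integral: for $s$ with $|s-r|\sim1$ exactly one of the two truncations survives and the contribution is controlled by the size bound alone, which is the content of the paper's term $II$ in its verification of condition~(ii).
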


\begin{proof} It should be pointed out that Lemma 3.3 in \cite {GoGr21} implies $T_{K}\in \mathbb{ B}\big(L^p(\R^3)\big)$ for all $1< p<\infty$. Hence in the sequel we mainly show the weak estimate for  the endpoint case $p=1$ with only a sketch of the proof for $1<p<\infty$.  Following a similar method as of  \cite{GoGr21}, we reduce  the integral in three space dimensions to the one dimensional integral by the spherical coordinate transform.
  Let $g(s):=\int_{S^2}f(s\omega) d \omega $ for $s>0$ where $S^2$ is the unite sphere of $\R^3$.  Then
  \begin{align*}
  T_{K}f(x)=\int_{||x|-|y||\ge 1}\frac{|x|}{|x|^4-|y|^4}f(y)dy=\frac{|x|}{4}\int_{||x|-\sqrt[4]{r}|\ge 1}\frac{r^{-\frac{1}{4}}g(\sqrt[4]{r})}{|x|^4-r}dr:=\frac{|x|}{4}\ G(|x|^4),
 \end{align*}
where
$$G(s)=\int_{|\sqrt[4]{s}-\sqrt[4]{r}|\ge 1}\frac{r^{-\frac{1}{4}}g(\sqrt[4]{r})}{s-r}\ dr.$$
Note that in  \cite[Lemma 3.3]{GoGr21}, it was shown that the function $G(s)$ can be dominated by the maximal truncated Hilbert transform $\mathbb{H}^*(\widetilde{g})(s)$ and Littlewood-Hardy maximal function $\mathbb{M}(\widetilde{g})(s)$,  where  the function  $\widetilde{g}(r):=r^{-\frac{1}{4}}g(\sqrt[4]{r})$. That is,
$$|G(s)|\lesssim \mathbb{H}^*(\widetilde{g})(s)+\mathbb{M}(\widetilde{g})(s), \ \ s>0.$$
Since
$$\int_{\R^3}|T_Kf(x)|^pdx={\pi\over 4^{p+1}} \int_0^\infty |G(s)|^p s^{\frac{p-1}{4}}ds,$$
and  $|s|^{\frac{p-1}{4}}$ is $A_p-$weights for all $1<p<\infty$, by using the boundedness of $\mathbb{H}^*$ and $\mathbb{M}$ on $L^p(\R, |s|^{\frac{p-1}{4}}ds)$ (see e.g. Grafakos \cite[Chapter 7]{Grafakos_Classical_III}),  then it immediately  follows that
 \begin{align*}
  \int_{\R^3}|T_Kf(x)|^pdx&\lesssim \int_0^\infty|\mathbb{H}^*(\widetilde{g})(s)|^p s^{\frac{p-1}{4}}ds+\int_0^\infty|\mathbb{M}(\widetilde{g})(s)|^p s^{\frac{p-1}{4}}ds\\
  &\lesssim\int_0^\infty |\widetilde{g}(r)|^pr^{\frac{p-1}{4}}dr \lesssim \|f\|^p_{L^p(\R^3)},
 \end{align*}
which gives the integral operator $T_K\in \mathbb{B}\big(L^p(\R^3)\big)$ for all $1<p<\infty$.

We remark that the arguments above depend on the strong estimates of Hilbert  transforms  $\mathbb{H}^*(\widetilde{g})$ and  Littlewood-Hardy maximal function $\mathbb{M}(\widetilde{g})$ on $L^p(\R^3)$ for $1<p<\infty$, which do not directly work for $p=1$ or $\infty$ due to the failure of  strong estimates of  $\mathbb{H}^*$ and $\mathbb{M}$ on these limiting spaces. Hence in the following we will use another argument to prove $T_K\in\mathbb{ B}\big(L^1(\R^3), L^{1,\infty}(\R^3)\big)$.

Firstly,  we  decompose $K(x,y)$ as follows:
 \begin{align*}K(x,y)=\frac{\chi_{\{||x|-|y||\ge 1\}}}{2|x|(|x|^2+|y|^2)}+\frac{\chi_{\{||x|-|y||\ge 1\}}}{4|x|^2(|x|+|y|)}+\frac{\chi_{\{||x|-|y||\ge 1\}}}{4|x|^2(|x|-|y|)}=:\sum_{j=1}^3 K_j(x,y),
 \end{align*}
and write the integral operator $T_K$  into the sum $\sum_{j=1}^3 T_{K_j}$, respectively.  Let $f\in L^1(\R^3)$. Then for each $x\in \R^3$ we easily obtain that
$$|T_{K_1}f(x)|+|T_{K_2}f(x)|\lesssim |x|^{-3}\|f\|_{L^1(\R^3)}.$$
Since $|x|^{-3}\in L^{1,\infty}(\R^3)$, so it follows immediately that $T_{K_j}\in B\big(L^1(\R^3), L^{1,\infty}(\R^3)\big)$ for $j=1,2$.

Next,  it remains to show $T_{K_3}\in B\big(L^1(\R^3), L^{1,\infty}(\R^3)\big)$.  By the polar coordinate transform,
$$T_{K_3}f(x)=\int_0^\infty \frac{g(r)}{4|x|^2(|x|-r)}\ \chi_{\{||x|-r|\ge 1\}}\ r^2dr=\mathbb{W}(g_{0})(|x|),$$
where $g(r)=\int_{S^2}f(r\omega) d \omega $  and
$$\mathbb{W}(g_{0})(s):=\int_{\R}\frac{\chi_{\{|s-r|\ge 1\}}}{4s^2(s-r)}\ g_{0}(r) r^2dr, \ \ g_{0}(s)=\chi_{(0,\infty)}(s)g(s).$$

Let $d\mu(r)=r^2dr$ be  a Borel measure on the real line $\R$. Then $d\mu(r)$ is a doubling measure on $\R $ ( see e.g. Stein \cite[Page 12]{Stein} ).  In the following,  we will  regard  the integral  $\mathbb{W}(g_{0})$ as a singular integral on $L^1(\R, d\mu)$ in order to establish the weak estimate of  $T_{K_3}f$  on $L^1(\R^3)$.

In fact,  in view of the following facts:
 \begin{align*}
 \big|\{x\in \R^3; &\ |T_{K_3}f(x)|> \lambda\}\big|= \big|\{x\in \R^3; \ |\mathbb{W}(g_{0})(|x|)|> \lambda\}\big|\\&=4\pi\int_0^\infty \chi_{\{s\in\R;\ |\mathbb{W}(g_{0})(s)|>\lambda\}}\ s^2ds
= 4\pi \mu\{s\in\R^+;\ |\mathbb{W}(g_{0})(s)|>\lambda\};
  \end{align*}
  and
  $$\int_{\R} |g_{0}(s)|d\mu(s)\le \int_0^\infty \int_{S^2}|f(r\omega)|r^2d\omega dr =\|f\|_{L^1(\R^3)},$$
we can immediately conclude  that the operator $T_{K_3}\in \mathbb{B}\big(L^1(\R^3), L^{1,\infty}(\R^3)\big)$, if one has
   \begin{align}\label{weak estimates}
     \lambda\ \mu\{s\in\R;\ |\mathbb{W}(g_{0})(s)|>\lambda\}\lesssim \int_{\R} |g_{0}(s)|d\mu(s),  \ \lambda>0.
     \end{align}

 To obtain the weak estimate  \eqref{weak estimates},  we will make use of the theory of general  C-Z singular integral on the homogeneous space $(X, d\mu)$ with a doubling measure $\mu$. Indeed, in view of  conclusions  in Stein \cite[p.19, Theorem 1.3]{Stein}, it suffices to show that the integral $\mathbb{W}(f)$ on the homogeneous space $(\R,r^2dr)$ satisfies the following two conditions:
\begin{itemize}
 \item[(i)] There exist some $q>1$ and $A>0$ such that
 $$\|\mathbb{W}(f)\|_{L^q(\R, d\mu)}\le A \|f\|_{L^q(\R, d\mu)}, \  \ d\mu=r^2dr. $$
\vskip0.2cm
 \item[(ii)] The kernel $\mathcal{K}(s,r)=\frac{\chi_{\{|s-r|\ge 1\}}}{4s^2(s-r)}$ of the integral operator $\mathbb{W}(f)$, satisfies that
$$ \int_{|s-r|\ge 2\delta}|\mathcal{K}(s,r)-\mathcal{K}(s,\overline{r})|d\mu(s)\le A<\infty,$$
whenever $|r-\overline{r}|<\delta$ and $\delta>0$.
\end{itemize}

Firstly let us check the condition (i). Indeed, let   $1<q<{3\over 2}$, then
 \begin{align*}
 \int_{\R}|\mathbb{W}(f)(s)|^q d\mu(s)&=4^{-q}\int_{\R}\Big|\int_{|s-r|\ge 1}\frac{f(r)r^2}{s-r}dr\Big|s^{2-2q}ds\\
 &\lesssim \int_{\R}|f(r)r^2|^q r^{2-2q}dr=\|f\|_{L^q(\R, d\mu)}^q,
  \end{align*}
  where in the second inequality above,  we have used the weighted $L^q$ estimates of the truncated Hilbert transform
   on $L^q(\R, w(r)dr)$ with a $A_q$-weight $w(r)=|r|^{2-2q}$ due to the fact $-1<2-2q<q-1$ as $1<q<{3\over 2}$.

  Next, we come to prove the condition (ii). Let $\delta>0$ and $|r-\overline{r}|<\delta$. Then
  \begin{align*}
  \int_{|s-r|\ge 2\delta}&|\mathcal{K}(s,r)-\mathcal{K}(s,\overline{r})|d\mu(s)=
  \frac{1}{4}\int_{|s-r|\ge 2\delta}\Big|\frac{\chi_{\{|s-r|\ge 1\}}}{s-r}-\frac{\chi_{\{|s-\overline{r}|\ge 1\}}}{s-\overline{r}}\Big|ds\\
  &\lesssim\int_{|s-r|\ge 2\delta}\Big| \frac{\chi_{\{|s-r|\ge 1\}}}{s-r}-\frac{\chi_{\{|s-r|\ge 1\}}}{s-\overline{r}}\Big|ds+\int_{|s-r|\ge 2\delta}\Big|\frac{\chi_{\{|s-r|\ge 1\}}-\chi_{\{|s-\overline{r}| \ge1\}}}{s-\overline{r}}\Big|ds\\
  &:=I +II.
  \end{align*}
Note that $|r-\overline{r}|<\delta$ and $|s-r|\ge 2\delta$, which imply that $|s-\overline{r}|\ge \frac{1}{2}|s-r|$. Then
   $$I\le\int_{|s-r|\ge 2\delta}\frac{2|r-\overline{r}|}{|(s-r)(s-\overline{r})|}ds\le 2\delta\int_{|s-r|\ge 2\delta}\frac{ds}{|s-r|^2}=4,$$
   and
   \begin{align*}II&\le\int_{|s-r|\ge 2\delta}\frac{(\chi_{\{|s-\overline{r}|\ge 1/2\}}-\chi_{\{|s-\overline{r}| \ge1\}})}{|s-\overline{r}|}ds+\int_{1>|s-r|\ge 2\delta}\frac{\chi_{\{|s-\overline{r}| \ge1\}}}{|s-\overline{r}|}ds\\
   &\  \ \ \ \ \ \ \ \ \ \ \le \int _{{1\over2}\le |s-\overline{r}|<1}\frac{ds}{|s-\overline{r}|}+\int _{|s-r|<1}ds\le 2. \end{align*}
Thus the condition (ii)  holds. Hence by summarizing above all arguments  we can conclude the desired estimate \eqref{weak estimates},  and then  $T_K\in \mathbb{B}\big(L^1(\R^3), L^{1,\infty}(\R^3)\big)$.

Finally, we observe that the kernel of $T_K^*$ is given by
$$
\overline{K(y,x)}=\chi_{\{||x|-|y||\ge 1\}}\Big(\frac{|y|}{2|x|^2(|x|^2+|y|^2)}-\frac{1}{4|x|^2(|x|+|y|)}+\frac{1}{4|x|^2(|x|-|y|)}\Big).
$$
The last two terms are equal to exactly $K_2$ and $K_3$, respectively. The first term is dominated by $|x|^{-3}/4$. Hence, the same argument as above shows $T_K^*\in \mathbb{B}\big(L^1(\R^3), L^{1,\infty}(\R^3)\big)$.
\end{proof}

%

\section{Stationary formula and resolvent expansion at zero}
\label{section_wave_operator}
\subsection{The stationary formulas of wave operators}
\label{section_stationary}
First of all, we observe that it suffices to deal with $W_-$  since \eqref{def-wave} implies $W_+f=\overline{W_-\overline f}$.
The starting point is the following well-known stationary representation of $W_-$ (see e.g. Kuroda \cite{Kuroda}):
\begin{align}
	\label{stationary}
	W_-=I-\frac{2}{\pi i}\int_0^\infty \lambda^3 R_V^+(\lambda^4)V\left(R_0^+(\lambda^4)-R_0^-(\lambda^4)\right)d\lambda.
\end{align}
To explain the formula \eqref{stationary}, we need to introduce some notations.  Let
\begin{align*}
R_0(z)=(\Delta^2-z)^{-1},\quad
R_V(z)=(H-z)^{-1},\quad z\in \C\setminus[0,\infty),
\end{align*}
be the resolvents of $\Delta^2$ and $H=\Delta^2+V(x)$, respectively. We denote by $R^\pm_0(\lambda),R^\pm_V(\lambda)$ their boundary values (limiting resolvents) on $(0,\infty)$, namely
$$
R^\pm_0(\lambda)=\lim_{\ep \searrow 0}R_0(\lambda\pm i\ep),\quad R_V^\pm(\lambda)=\lim_{\ep \searrow 0}R_V(\lambda\pm i\ep),\quad \lambda>0.
$$
The existence of $R^\pm_0(\lambda)$ as bounded operators from $L^2_s(\R^3)$ to $L^2_{-s}(\R^3)$ with $s>1/2$ follows from the limiting absorption principle for the resolvent $(-\Delta-z)^{-1}$ of the free Schr\"odinger operator $-\Delta$ (see e.g.  Agmon \cite{Agmon}) and the following equality:
$$
R_0(z)=\frac{1}{2\sqrt z}\left[(-\Delta-\sqrt z)^{-1}-(-\Delta+\sqrt z)^{-1}\right],\quad z\in \C\setminus[0,\infty), \ \  \hbox{Im}\sqrt{z}>0.
$$
This formula above also gives the explicit expressions of the kernels of $R_0^\pm(\lambda^4)$:
\begin{align}
\label{free_resolvent}
R_0^\pm(\lambda^4,x,y)=\frac{1}{8\pi\lambda^2|x-y|}\Big(e^{\pm i\lambda|x-y|}-e^{-\lambda|x-y|}\Big)=\frac{F_\pm(\lambda|x-y|)}{8\pi\lambda},\
\end{align}
where $x, y\in \R^3$ and $F_\pm(s)=s^{-1}(e^{\pm is}-e^{-s})$.
The existence of $R^\pm_V(\lambda)$ for $\lambda>0$ under our assumption of Theorem \ref{theorem_1} has been also already shown (see e.g. \cite{Agmon,Kuroda}).

\subsection{Resolvent asymptotic expansions near zero}
\label{subsection_resolvent}

This subsection is mainly devoted to the study of asymptotic behaviors of the resolvent $R_V^+(\lambda^4)$ at low energy $\lambda\to +0$. We also prepare some elementary  lemmas  needed  in the proof of our main theorems.

We begin with recalling the symmetric resolvent formula for $R_V^\pm(\lambda^4)$. Let $v(x)=|V(x)|^{1/2}$ and $U(x)=\sgn V(x)$, that is $U(x)=1$ if $V(x)\ge0$ and $U(x)=-1$ if $V(x)<0$. Let $M^\pm(\lambda)=U+vR_0^\pm(\lambda^4)v$ and $(M^{\pm})^{-1}(\lambda):=[M^\pm(\lambda)]^{-1}$.
\begin{lemma}
\label{lemma_3_2}
For $\lambda>0$, $M^\pm(\lambda)$ is invertible on $L^2(\R^3)$ and $R_V^\pm(\lambda^4)V$ has the form
\begin{align}
\label{lemma_3_2_1}
R_V^\pm(\lambda^4)V=R_0^\pm(\lambda^4)v (M^\pm)^{-1}(\lambda)v.
\end{align}
\end{lemma}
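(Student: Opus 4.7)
The lemma splits into two claims: the algebraic identity \eqref{lemma_3_2_1} for $R_V^\pm(\lambda^4) V$, conditional on invertibility of $M^\pm(\lambda)$, and the invertibility itself on $L^2(\R^3)$ for every $\lambda>0$. I would derive the identity formally first and then establish invertibility by a Fredholm argument exploiting the absence-of-embedded-eigenvalues hypothesis, where the bulk of the work lies.

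For the identity, the plan is to start from the second resolvent equation $R_V^\pm(\lambda^4)=R_0^\pm(\lambda^4)-R_0^\pm(\lambda^4)V R_V^\pm(\lambda^4)$, which rearranges to $R_V^\pm(\lambda^4)V=(I+R_0^\pm(\lambda^4)V)^{-1}R_0^\pm(\lambda^4)V$ once the inverse exists. With the factorisation $V=vUv$ and the involution property $U^2=I$, the key algebraic observation is $I+Uv R_0^\pm(\lambda^4)v=U M^\pm(\lambda)$. A direct verification then yields
\begin{equation*}
(I+R_0^\pm(\lambda^4)V)\bigl[R_0^\pm(\lambda^4)v(M^\pm)^{-1}(\lambda)v\bigr]=R_0^\pm(\lambda^4)v\bigl(UM^\pm(\lambda)\bigr)(M^\pm)^{-1}(\lambda)v=R_0^\pm(\lambda^4)V,
\end{equation*}
so that \eqref{lemma_3_2_1} follows once $M^\pm(\lambda)$ is shown to be invertible on $L^2(\R^3)$.

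For the invertibility, first observe that the decay hypothesis on $V$ combined with the limiting absorption principle $R_0^\pm(\lambda^4)\in \mathbb{B}(L_s^2,L_{-s}^2)$ for $s>1/2$ (inherited from the corresponding principle for $-\Delta\mp\sqrt{\lambda^4}$) makes $v R_0^\pm(\lambda^4) v$ compact on $L^2(\R^3)$: approximate $v$ by compactly supported truncations and invoke Rellich--Kondrachov on the resulting weighted composition. Since $U$ is a bounded involution on $L^2$, $M^\pm(\lambda)=U+v R_0^\pm(\lambda^4)v$ is a compact perturbation of an invertible operator, so by the Fredholm alternative it suffices to verify $\ker M^\pm(\lambda)=\{0\}$.

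Given $\phi\in\ker M^\pm(\lambda)$, set $\psi:=-R_0^\pm(\lambda^4)v\phi\in L_{-s}^2(\R^3)$. Using $M^\pm(\lambda)\phi=0$ in the equivalent form $Uv R_0^\pm(\lambda^4)v\phi=-\phi$, a short computation gives $(\Delta^2-\lambda^4)\psi=-v\phi$ and $V\psi=vUv\psi=-vUv R_0^\pm(\lambda^4)v\phi=v\phi$, so $H\psi=\lambda^4\psi$ distributionally, with $\psi$ inheriting from $R_0^\pm(\lambda^4)$ the outgoing/incoming radiation behaviour at infinity visible in \eqref{free_resolvent}. The standing hypothesis that $H$ has no embedded eigenvalue in $(0,\infty)$, combined with a Rellich-type uniqueness theorem for solutions of $H\psi=\lambda^4\psi$ satisfying this radiation condition, then forces $\psi\equiv 0$; applying $(\Delta^2-\lambda^4)$ gives $v\phi=0$, and feeding this back into $M^\pm(\lambda)\phi=0$ yields $U\phi=0$, hence $\phi=0$. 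The main delicate point is precisely this fourth-order Rellich-type uniqueness, which I would invoke as a known consequence of the limiting absorption principle for $H$ already cited in the excerpt rather than re-establish from scratch.
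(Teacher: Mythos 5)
Your plan matches the paper's proof in essentials: both use the symmetric factorisation $V=vUv$ with $U^2=I$ and rearrange the second resolvent equation, and the paper likewise invokes (by citation to Agmon and Kuroda) rather than re-derives the invertibility of $M^\pm(\lambda)$, which you sketch in more detail via the standard compactness/Fredholm/Rellich-uniqueness route. One small variation worth flagging: the paper starts from $R_V^\pm=R_0^\pm-R_V^\pm V R_0^\pm$, applied to $v$, which gives $R_V^\pm v\,(I+UvR_0^\pm v)=R_0^\pm v$ and closes the argument solely from invertibility of $UM^\pm=I+UvR_0^\pm v$ on $L^2$; your route via $R_V^\pm=R_0^\pm-R_0^\pm V R_V^\pm$ and verification of the candidate implicitly requires invertibility (at least injectivity) of $I+R_0^\pm V$ on the relevant weighted space, which you assume ``once the inverse exists'' but never tie back to the invertibility of $M^\pm$ that your Fredholm argument establishes; the bridge is the standard equivalence of $I+AB$ and $I+BA$ (with $A=R_0^\pm v$, $B=Uv$), so this is easily repaired, but the paper's choice of resolvent identity avoids the issue altogether.
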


\begin{proof}
Due to  the absence of embedded positive eigenvalue of $H$,  it was well-known  that  $M^\pm(\lambda)$ is invertible on $L^2(\R^3)$ for all $\lambda>0$ (see e.g.  Agmon \cite{Agmon} and Kuroda \cite{Kuroda}). Since $V=vUv$ and $1=U^2$, we have
\begin{align*}
R_V^\pm(\lambda^4)v
&=R_0^\pm(\lambda^4)v-R_V^\pm(\lambda^4) v UvR_0^\pm(\lambda^4)v
=R_0^\pm(\lambda^4)v\Big(1+UvR_0^\pm(\lambda^4)v\Big)^{-1}\\
&=R_0^\pm(\lambda^4)v\Big(U+vR_0^\pm(\lambda^4)v\Big)^{-1}U^{-1}.
\end{align*}
Multiplying $Uv$ from the right, we obtain the desired formula for $R_V^\pm(\lambda^4)V$.
\end{proof}

Throughout the paper, we only use $M^+(\lambda)$, so we write $M(\lambda)=M^+(\lambda)$ for simplicity.
In order to obtain the asymptotic behaviors of  $R_V^+(\lambda^4)$  near $\lambda=0$, we  need to establish  the asymptotic expansion of $M^{-1}(\lambda)$, which plays a crucial role in the paper. To this end, we introduce some notations. We say that an integral operator $T_K\in \mathbb B\big(L^2(\R^3)\big)$ with the kernel $K$ is {\it absolutely bounded} if $T_{|K|}\in \mathbb B\big(L^2(\R^3)\big)$. Let
\begin{align}\label{Q,P}
P:=\frac{\<\cdot,v\>v}{\norm{V}_{L^1}},\quad \widetilde P=\frac{8\pi}{(1+i)\norm{V}_{L^1}}P=\frac{8\pi}{(1+i)\norm{V}_{L^1}^2}\<\cdot,v\>v,\quad Q:=I-P.
\end{align}
Note that $P$ is the orthogonal projection onto the span of $v$ in $L^2(\R^3)$, {\it i.e.} $PL^2=\mathrm{\mathop{span}}\{v\}$, and $Q(v)=0$.

\begin{lemma}
\label{lemma_3_3}
Let $H=\Delta^2+ V(x)$ with $|V(x)|\lesssim \<x\>^{-\mu}$ for $x\in \R^3$. If zero is a regular point of $H$ and $\mu>11$, then there exists $\lambda_0>0$ such that $M^{-1}(\lambda)$ satisfies the following asymptotic expansions on $L^2(\R^3)$ for $0<\lambda\le \lambda_0$:
\begin{align}
\label{lemma_3_3_1}
M^{-1}(\lambda)=QA_{0}Q+\lambda\big(QA_{1,0}+A_{0,1}Q\big)
+\lambda\widetilde P+\lambda^2A_2+\Gamma_3(\lambda),
\end{align}
 where $A_0$, $A_{1,0}$, $A_{0,1}$ and $A_2$ are $\lambda$-independent bounded operators on $L^2$ and $\Gamma_3(\lambda)$ are $\lambda$-dependent bounded operators on $L^2$ such that all the operators  in the right sides of \eqref{lemma_3_3_1} are absolutely bounded. Moreover, $\Gamma_3(\lambda)$ satisfy that for $\ell=0,1,2,3$,
\begin{align}
\label{lemma_3_3_4}
\big\||\partial_\lambda^\ell \Gamma_3(\lambda)|\big\|_{L^2\to L^2}\le C_\ell \lambda^{3-\ell},\quad 0<\lambda\le\lambda_0.
\end{align}
\end{lemma}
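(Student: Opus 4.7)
The plan is to Taylor expand the free resolvent kernel in \eqref{free_resolvent} around $\lambda=0$, substitute into $M(\lambda)=U+vR_0^+(\lambda^4)v$, and invert by a Schur-complement (Feshbach) argument relative to the orthogonal splitting $I=P+Q$. From the power series $F_+(s)=s^{-1}(e^{is}-e^{-s})=(1+i)-s+\tfrac{1-i}{6}s^2+\tfrac{1+i}{120}s^4+\cdots$ (in which the coefficients of $s^{3},s^{7},s^{11},\ldots$ vanish), one reads off
\begin{equation*}
R_0^+(\lambda^4)=\tfrac{1}{\lambda}G_{-1}+G_0+\lambda G_1+\lambda^3 G_3+R_4(\lambda),
\end{equation*}
where $G_{-1}$ has constant kernel $(1+i)/(8\pi)$, each $G_j$ has polynomial kernel $\propto|x-y|^{j+1}$ (with $G_2\equiv 0$), and $R_4(\lambda)$ is an integral-form Taylor remainder. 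Sandwiching by $v$ yields
\begin{equation*}
M(\lambda)=\frac{c_0}{\lambda}P+B_0+\lambda B_1+\lambda^3 B_3+vR_4(\lambda)v,\qquad c_0=\frac{(1+i)\norm{V}_{L^1}}{8\pi},
\end{equation*}
with $B_0=U+vG_0v$ and $B_j=vG_jv$, and in particular $c_0^{-1}P=\widetilde P$.

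The next step is to invert $M(\lambda)$ in block form relative to $L^2=PL^2\oplus QL^2$. The $QQ$-block equals $QB_0Q+O(\lambda)$ on $QL^2$, and the regular-zero hypothesis is equivalent to the invertibility of $QB_0Q$ on $QL^2$ via the standard Jensen--Nenciu criterion already used in \cite{GoGr21}. Setting $A_0:=(QB_0Q)^{-1}|_{QL^2}$, the Neumann expansion gives $(QMQ)^{-1}=A_0+O(\lambda)$. It remains to invert the Schur complement
\begin{equation*}
\widetilde S(\lambda)=PMP-PMQ\,(QMQ)^{-1}\,QMP=\tfrac{c_0}{\lambda}P+S_0+O(\lambda)
\end{equation*}
on $PL^2$, which is immediate since the $\lambda^{-1}$ term dominates; factoring it out yields $\widetilde S^{-1}=\lambda\widetilde P+O(\lambda^2)$. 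Assembling the four blocks of $M^{-1}$ via the standard block inversion formula and collecting by powers of $\lambda$, the $O(1)$-term comes solely from the $QQ$-block and equals $QA_0Q$; at order $\lambda$ the $PP$-block produces $\lambda\widetilde P$ while the cross-blocks yield $\lambda QA_{1,0}$ and $\lambda A_{0,1}Q$ with explicit expressions $A_{1,0}:=-A_0QB_0\widetilde P$ and $A_{0,1}:=-\widetilde PB_0QA_0$; the $\lambda^2A_2$ summand then collects all remaining second-order contributions from the four blocks.

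The main technical obstacle is to verify (i) that each operator above is absolutely bounded on $L^2$ and (ii) the derivative bound \eqref{lemma_3_3_4} on $\Gamma_3(\lambda)$, which collects $vR_4(\lambda)v$ together with the $O(\lambda^3)$ remainders of the Neumann and Schur-complement expansions. Since the sandwiched kernel $v(x)|x-y|^{j+1}v(y)$ is dominated by $\<x\>^{-\mu/2}\<y\>^{-\mu/2}(\<x\>^{j+1}+\<y\>^{j+1})$, each $vG_jv$ is Hilbert--Schmidt (hence absolutely bounded) provided $\mu>2j+5$; the highest explicit degree entering is $j=3$, coming from $B_3$ which contributes to $A_2$ through the $PP$-block at order $\lambda^2$, so $\mu>11$ is forced. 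The analogous counting applied to $\partial_\lambda^\ell R_4(\lambda)$ for $\ell\leq 3$, through the standard Taylor-with-integral-remainder formula, leads to the same threshold $\mu>11$. The remaining bookkeeping -- tracking second-order contributions from $\widetilde S^{-1}$, from $(QMQ)^{-1}$, and from both cross-blocks, and confirming that only the rank-one piece $\widetilde P$ carries the $P$-content at order $\lambda$ -- is routine but requires careful accounting.
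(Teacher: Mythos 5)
Your proposal is correct and takes essentially the same approach as the paper: Taylor expansion of $M(\lambda)=U+vR_0^+(\lambda^4)v$ near $\lambda=0$ using the power series of $F_+$, followed by a Feshbach/Schur-complement inversion relative to $L^2=PL^2\oplus QL^2$, with the regular-zero hypothesis entering as invertibility of $QTQ=QB_0Q$ on $QL^2$; the paper organizes the same computation through the Jensen--Nenciu lemma after rescaling by $\lambda/a$, while you invert $QMQ$ directly and then the rank-one Schur complement on $PL^2$. One small bookkeeping slip: $B_3=vG_3v$ does not in fact enter the $\lambda^2$-coefficient $A_2$ (the $PP$-block Schur-complement inverse at order $\lambda^2$ requires only $B_0$, and the $QQ$- and cross-block contributions at that order involve only $B_0,B_1$); rather $B_3$, together with the Taylor remainder, is absorbed into the error $\Gamma_3(\lambda)$. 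The threshold $\mu>11$ still emerges for exactly the reason you give -- one needs the error kernel $\lambda^3 v(x)|x-y|^4v(y)$ and its $\lambda$-derivatives to define absolutely bounded (Hilbert--Schmidt) operators.
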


We remark that, in the regular case ( i.e. zero is neither an eigenvalue nor a resonance of $H$),  the expansion of  $M^{-1}(\lambda)$ at zero has been obtained with different error terms  in \cite{FSY18, Erdogan-Green-Toprak, GoGr21}.  In  Lemma \ref{lemma_3_3} above, the expansion \eqref{lemma_3_3_1} contains  more specific and higher order terms  at the cost of fast decay of $V$ in order to study the endpoint estimates of wave operators $W_\pm$ here.  For reader's convenience, we give its simple proof  as  appendix in Section \ref{appendix}.  Moreover,  it should be pointed out that asymptotic expansions  of $M^{-1}(\lambda)$  were also established in the presence of zero resonance or eigenvalue in \cite{Erdogan-Green-Toprak}.

\vskip0.2cm

In the following  we give some elementary but useful lemmas.

\begin{lemma}
\label{lemma_3_4}
Let $\lambda>0$ and $x,y\in \R^3$. If $F\in C^1(\R_+)$,
 then
\[
F(\lambda|x-y|)=F(\lambda |x|)-\lambda \int_0^1\big\langle y, w(x-\theta y) \big\rangle\  F'(\lambda|x-\theta y|)d\theta,
\]
where $F'(s)$ is the first order derivative of $F(s)$, $\langle\cdot,\cdot\rangle$ denotes the inner product of $\R^3$, and $w(x)=\frac{x}{|x|}$ for $x\neq0$ and $w(x)=0$ for $x=0$.

\begin{proof}
Let $G_\varepsilon(y)= F(\lambda\sqrt{\varepsilon^2+|x-y|^2}),\, \varepsilon \neq 0$.
Then $G_\varepsilon(y)\in C^{1}(\mathbb{R}^3)$  for $\varepsilon \neq 0$ and
$F(\lambda|x-y|)=\lim_{\varepsilon\rightarrow0}G_\varepsilon(y)$.
By Taylor's expansions, we have
\begin{equation}\label{lem-Taylorformula}
   \begin{split}
G_\varepsilon(y)=G_\varepsilon(0)
+\int_0^1 \sum_{|\alpha|=1}(\partial^\alpha
G_\varepsilon )(\theta y)y^\alpha d\theta.
\end{split}
\end{equation}
Observe that
$$\partial_{y_j}G_\varepsilon(y)
=\frac{-\lambda(x_j-y_j)}{(\varepsilon^2+|x-y|^2)^{\frac{1}{2}}}F'(\lambda\sqrt{\varepsilon^2+|x-y|^2}),
\ j=1,2,3.$$
Since there exists a constants $C=C(\lambda,x,y)$ such that $|(\partial_{y_i}G_\varepsilon)(\theta y)|
 \leq C (i=1,2,3)$ for $0\leq \theta\leq 1$ and $0<\varepsilon \leq 1$,
then by the  Lebesgue dominated convergence theorem, we have for $x-\theta y \neq 0$,
\begin{equation*}
   \begin{split}
\lim_{\varepsilon\rightarrow 0}\int_0^1(\partial_{y_i}G_\varepsilon)(\theta y)d\theta
=& \int_0^1 \frac{-\lambda(x_j-\theta y_j)}{|x-\theta y|}F'(\lambda|x-\theta y|)d\theta, \ j=1,2,3,
\end{split}
\end{equation*}
and $\lim_{\varepsilon\rightarrow 0}\int_0^1(\partial_{y_i}G_\varepsilon)(\theta y)d\theta=0 \ (j=1,2,3)$ for
$x-\theta y=0 $.
From Taylor expansions \eqref{lem-Taylorformula},  we obtain that
\begin{equation*}
   \begin{split}
F(\lambda|x-y|)= F(\lambda|x|)-\lambda \int_0^1F'(\lambda|x-\theta y|)\big\langle y, w(x-\theta y) \big\rangle d\theta.
\end{split}
\end{equation*}
\end{proof}
\end{lemma}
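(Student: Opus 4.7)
The plan is to prove this integral identity by a smoothing/regularization argument, since the function $y\mapsto F(\lambda|x-y|)$ is not differentiable at points where $x-y=0$ and so a direct application of the fundamental theorem of calculus along the segment $\{\theta y:\theta\in[0,1]\}$ is not immediately justified. For a fixed $x\in\R^3$ I would introduce the regularized function
\[
G_\varepsilon(y)=F\bigl(\lambda\sqrt{\varepsilon^2+|x-y|^2}\bigr),\qquad \varepsilon\neq0,
\]
which is $C^1$ in $y$ on all of $\R^3$ because $\sqrt{\varepsilon^2+|x-y|^2}$ stays bounded below by $|\varepsilon|>0$.

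Next I would apply the one-variable fundamental theorem of calculus to the map $\theta\mapsto G_\varepsilon(\theta y)$ on $[0,1]$, which gives
\[
G_\varepsilon(y)=G_\varepsilon(0)+\int_0^1 \sum_{j=1}^{3}(\partial_{y_j}G_\varepsilon)(\theta y)\,y_j\,d\theta,
\]
and then explicitly compute
\[
\partial_{y_j}G_\varepsilon(y)=\frac{-\lambda(x_j-y_j)}{\sqrt{\varepsilon^2+|x-y|^2}}\,F'\!\bigl(\lambda\sqrt{\varepsilon^2+|x-y|^2}\bigr).
\]
So the identity becomes
\[
G_\varepsilon(y)=G_\varepsilon(0)-\lambda\int_0^1 F'\!\bigl(\lambda\sqrt{\varepsilon^2+|x-\theta y|^2}\bigr)\frac{\langle y,x-\theta y\rangle}{\sqrt{\varepsilon^2+|x-\theta y|^2}}\,d\theta .
\]

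The last step is to pass to the limit $\varepsilon\to0$. Pointwise in $\theta$, for those $\theta\in[0,1]$ with $x-\theta y\neq0$, the integrand converges to $F'(\lambda|x-\theta y|)\langle y,w(x-\theta y)\rangle$, and at the (at most one) $\theta$ with $x-\theta y=0$ the prefactor $(x-\theta y)/\sqrt{\varepsilon^2+|x-\theta y|^2}$ stays bounded by $1$, so the limit is $0$, matching the convention $w(0)=0$. Since $F'$ is continuous on $\R_+$ and hence bounded on compact sets, and $|(x-\theta y)|/\sqrt{\varepsilon^2+|x-\theta y|^2}\le1$ uniformly in $\varepsilon\in(0,1]$, the integrand is dominated by $C|y|$ for $\theta\in[0,1]$. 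Therefore Lebesgue's dominated convergence theorem lets me interchange the limit with the integral and with the pointwise limit $\lim_{\varepsilon\to0}G_\varepsilon(y)=F(\lambda|x-y|)$, yielding the stated formula.

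The only real obstacle is the possible singularity of $w$ at the origin, i.e.\ points on the segment $[0,y]$ where $\theta=\theta_0$ makes $x-\theta_0 y=0$; this is precisely what the regularization kills, since the regularized quotient $(x-\theta y)/\sqrt{\varepsilon^2+|x-\theta y|^2}$ is globally bounded by $1$. Everything else is a direct chain rule computation and standard dominated convergence, so no further delicate estimates are required.
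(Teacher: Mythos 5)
Your proof is correct and takes essentially the same approach as the paper: both regularize $y\mapsto F(\lambda|x-y|)$ via $G_\varepsilon(y)=F(\lambda\sqrt{\varepsilon^2+|x-y|^2})$, apply the first-order Taylor/fundamental-theorem-of-calculus identity along the segment from $0$ to $y$, compute $\partial_{y_j}G_\varepsilon$ by the chain rule, and pass $\varepsilon\to0$ by dominated convergence using that the regularized unit-vector factor is bounded by $1$. Your write-up is, if anything, slightly more explicit than the paper's in naming the dominating function and in justifying that the limit at the exceptional $\theta$ with $x-\theta y=0$ is $0$, consistent with the convention $w(0)=0$.
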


Below we apply  Lemma \ref{lemma_3_4} for the specific functions
$
F_\pm(s)=s^{-1}(e^{\pm i s}-e^{-s})
$  to establish the following formulas used later.

\begin{lemma}
\label{lemma_projection}
Let $Q$ be the orthogonal  projection defined in \eqref{Q,P}, $\lambda>0$ and $
F_\pm(s)=s^{-1}(e^{\pm i s}-e^{-s})
$. Then
\begin{align*}
[QvR_0^\pm(\lambda^4)f](x)
=-\frac{1}{8\pi} Q\left( v(x)\int_{\R^3} \Big(\int_0^1\big\langle x, w(y-\theta x)\big\rangle\   F_\pm^{(1)}(\lambda|y-\theta x|)d\theta\Big) f(y) dy\right);
\end{align*}
and
\begin{align*}
[R_0^\pm(\lambda^4)vQf](x)
=-\frac{1}{8\pi}\int_{\R^3}\Big( \int_0^1  F_\pm^{(1)}(\lambda|x-\theta y|)\ \big\langle y, w(x-\theta y)\big\rangle d\theta\Big) v(y)(Qf)(y) dy,
\end{align*}
where $F_\pm^{(1)}(s)=s^{-2}\big((\pm is-1)e^{\pm is}+(s+1)e^{-s}\big)$ denotes the first order derivative of
$F_\pm(s).$
\end{lemma}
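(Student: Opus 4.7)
The plan is to combine the explicit free-resolvent kernel \eqref{free_resolvent}, namely $R_0^\pm(\lambda^4)(x,y) = F_\pm(\lambda|x-y|)/(8\pi\lambda)$, with the Taylor-type identity of Lemma \ref{lemma_3_4} applied to $F = F_\pm$, and then invoke the cancellation $Qv = 0$ that holds because $P = \|V\|_{L^1}^{-1}\langle\cdot, v\rangle v$ is the orthogonal projection onto $\mathrm{span}\{v\}$. The key observation is that Lemma \ref{lemma_3_4} expands $F_\pm(\lambda|x-y|)$ into a sum of a term depending on only one of the two variables plus a remainder; the one-variable leading term is precisely what $Q$ (acting on the left) or $Q^* = Q$ (acting on the right against $v$) annihilates.

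For the first identity, I would write
\[
[vR_0^\pm(\lambda^4)f](x) = \frac{v(x)}{8\pi\lambda}\int_{\R^3} F_\pm(\lambda|x-y|) f(y)\,dy,
\]
and apply Lemma \ref{lemma_3_4} with the roles of $x$ and $y$ interchanged (permissible since $|x-y|=|y-x|$) to obtain
\[
F_\pm(\lambda|x-y|) = F_\pm(\lambda|y|) - \lambda \int_0^1 \big\langle x, w(y-\theta x)\big\rangle F_\pm^{(1)}(\lambda|y-\theta x|)\,d\theta.
\]
The first piece contributes $\tfrac{v(x)}{8\pi\lambda}\int F_\pm(\lambda|y|) f(y)\,dy$, which is $v(x)$ multiplied by a scalar independent of $x$; hence it is killed by the outer $Q$ because $Qv = 0$. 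The second piece, with the factor $-\lambda$ canceling the $1/\lambda$ prefactor, produces exactly the stated formula after pulling out $-1/(8\pi)$ and applying $Q$.

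For the second identity, I would apply Lemma \ref{lemma_3_4} directly (no interchange) to $F_\pm(\lambda|x-y|)$ inside
\[
[R_0^\pm(\lambda^4)vQf](x) = \frac{1}{8\pi\lambda} \int_{\R^3} F_\pm(\lambda|x-y|)\, v(y)(Qf)(y)\,dy.
\]
The leading term is $\tfrac{F_\pm(\lambda|x|)}{8\pi\lambda} \int v(y)(Qf)(y)\,dy$; but $\int v\,(Qf) = \langle v, Qf\rangle = \langle Qv, f\rangle = 0$, since $Q$ is self-adjoint and $Qv = 0$. Only the integral-remainder survives, matching the claimed expression once constants are collected. There is no real analytic obstacle here: the argument is a careful bookkeeping of the expansion of Lemma \ref{lemma_3_4} plus a single algebraic cancellation. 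The only subtlety is selecting, in each of the two formulas, the version of Lemma \ref{lemma_3_4} whose leading term is killable by the relevant occurrence of $Q$ — by $Qv = 0$ in the first formula and by $\langle v, Q\,\cdot\,\rangle = 0$ in the second.
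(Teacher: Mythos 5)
Your proposal is correct and follows essentially the same route as the paper: apply the kernel formula $R_0^\pm(\lambda^4)(x,y)=F_\pm(\lambda|x-y|)/(8\pi\lambda)$, invoke Lemma \ref{lemma_3_4} with the variable roles chosen so that the leading one-variable term is annihilated by the relevant occurrence of $Q$ (via $Qv=0$ for the first identity, via $\langle v,Qf\rangle=\langle Qv,f\rangle=0$ for the second), and note that the factor $-\lambda$ from the Taylor remainder cancels the $1/\lambda$ prefactor. The only difference is cosmetic — you spell out the self-adjointness step for the second identity, which the paper compresses to "the proof is analogous."
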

\begin{remark}\label{remark_lemma_projection}
The above formulas for $QvR_0^\pm(\lambda^4)f$  and  $R_0^\pm(\lambda^4)vQf$ can be written respectively as:
$$
QvR_0^\pm(\lambda^4)f=\frac{1}{8\pi} Q\Big(\int_{\R^3}h_{\ell}(\lambda,x,y)f(y) dy\Big), \ \
R_0^\pm(\lambda^4)vQf=\frac{1}{8\pi}\int_{\R^3}h_{r}(\lambda,x,y)\big(Qf\big)(y) dy,
$$
where
\begin{align*}
h_{\ell}(\lambda,x,y)&=-v(x)\int_0^1\big\langle x, w(y-\theta x)\big\rangle\   F_\pm^{(1)}(\lambda|y-\theta x|)d\theta,\\
h_{r}(\lambda,x,y)&=-v(y)\int_0^1 \big\langle y, w(x-\theta y)\big\rangle\  F_\pm^{(1)}(\lambda|x-\theta y|)  d\theta.
\end{align*}
 Moreover,
we also notice that
$$
h_{\ell}(\lambda,x,y),\ h_{r}(\lambda,x,y)=O_{x,y}(1),\quad \lambda\to +0.
$$
Here, we use  $h(\lambda,x,y)=O_{x,y}(\lambda^k)$ to denote that $|h(\lambda,x,y)|\lesssim\lambda^k$ for fixed $x,y$.
Compared with the free resolvent $|R^\pm_0(\lambda^4)(x,y)|\lesssim\lambda^{-1}$,  such a gain of one order power of $\lambda$  will be crucial to establish stronger point-wise estimates of integral kernels related to $W_\pm$ later.
\end{remark}

\begin{proof}[Proof of Lemma \ref{lemma_projection}]
 By the \eqref{free_resolvent} and applying Lemma \ref{lemma_3_4} to $F_\pm$, we obtain
\begin{align*}
R_0^\pm(\lambda^4,x,y)=\frac{F_\pm(\lambda |y-x|)}{8\pi\lambda}
=\frac{F_\pm(\lambda |y|)}{8\pi\lambda}-\frac{1}{8\pi}\int_0^1\big\langle x, w(y-\theta x)\big\rangle F_\pm'(\lambda|y-\theta x|)d\theta.
\end{align*}
Since $Q(v)=0$, then it follows that
\begin{align*}
[Q vR_0^\pm(\lambda^4)f](x)=\frac{1}{8\pi\lambda}&Q(v)\int_{\R^3} F_\pm(\lambda|y|)f(y)dy-\\
&\frac{1}{8\pi} Q \Big(v\int_{\R^3}\Big(\int_0^1\big\langle x, w(y-\theta x)\big\rangle F_\pm'(\lambda|y-\theta x|)d\theta\Big) f(y)dy\Big)\\
=-\frac{1}{8\pi} Q &\Big(v\int_{\R^3}\Big(\int_0^1\big\langle x, w(y-\theta x)\big\rangle F_\pm'(\lambda|y-\theta x|)d\theta\Big) f(y)dy\Big).
\end{align*}
For $R_0^+(\lambda^4)vQf$,  by taking
$$
R_0^\pm(\lambda^4,x,y)
=\frac{F_\pm(\lambda |x|)}{8\pi\lambda}-\frac{1}{8\pi}\int_0^1\big\langle y, w(x-\theta y)\big\rangle F_\pm'(\lambda|x-\theta y|)d\theta,
$$
the proof is analogous.
\end{proof}

 Moreover,  we also need to frequently use the following lemmas later.

\begin{lemma}
\label{lemma_AB}
Let $F_\pm(s)=s^{-1}(e^{\pm is}-e^{-s})$, $A_{\pm}(s)=e^{\mp is}F'_{\pm}(s)$ and $B_{\pm}(s)=e^{\mp is}F_{\pm}(s)$. Then for any $\ell\in \mathbb{N}$, the following estimates hold:
\begin{align*}
&|F^{(\ell)}_{\pm}(s)|\lesssim \< s\>^{-1},\ \  s>0, \\
&|A^{(\ell)}_{\pm}(s)|+|B^{(\ell)}_{\pm}(s)|\lesssim\< s\>^{-\ell-1},\ \  s>0,
\end{align*}
where $F^{(\ell)}_{\pm}(s)$, $A^{(\ell)}_{\pm}(s)$ denote the $\ell^{\text{th}}$ order derivative of
$F^{(\ell)}_{\pm}(s)$, $A^{(\ell)}_{\pm}(s),$  respectively.
\end{lemma}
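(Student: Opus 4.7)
The plan is to exploit the algebraic cancellation produced by multiplying $F_\pm$ by $e^{\mp is}$: whereas each derivative of $F_\pm$ still produces an oscillating factor of the form $(\pm i)^k s^{-1}e^{\pm is}$ that prevents decay faster than $\<s\>^{-1}$, the product $e^{\mp is}F_\pm(s)$ collapses the oscillation into a purely rational term $1/s$ plus an exponentially small remainder, whose $\ell^{\text{th}}$ derivative genuinely decays like $s^{-\ell-1}$.

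Concretely, I would begin with $B_\pm$, for which a direct computation gives
\begin{equation*}
B_\pm(s)=\frac{e^{\mp is}(e^{\pm is}-e^{-s})}{s}=\frac{1}{s}-\frac{e^{-(1\pm i)s}}{s}.
\end{equation*}
For $s\ge 1$, the $\ell^{\text{th}}$ derivative of the first summand equals $(-1)^\ell\ell!\,s^{-\ell-1}$, and the Leibniz rule applied to the second summand produces a sum of terms of the form $c_k e^{-(1\pm i)s}/s^{k+1}$ with $0\le k\le\ell$; since $|e^{-(1\pm i)s}|=e^{-s}$, each such term is bounded (far more strongly than needed) by $C_\ell s^{-\ell-1}$ on $[1,\infty)$. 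For $0<s\le 1$, the Taylor expansion $1-e^{-(1\pm i)s}=\sum_{k\ge 1}(-1)^{k+1}(1\pm i)^k s^k/k!$ shows $B_\pm$ extends smoothly across $s=0$, so $|B_\pm^{(\ell)}(s)|\le C_\ell$ uniformly on $(0,1]$. Combining the two regimes yields $|B_\pm^{(\ell)}(s)|\lesssim \<s\>^{-\ell-1}$.

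The estimate for $A_\pm$ then follows from the elementary identity
\begin{equation*}
B_\pm'(s)=\mp i e^{\mp is}F_\pm(s)+e^{\mp is}F_\pm'(s)=\mp i B_\pm(s)+A_\pm(s),
\end{equation*}
i.e.\ $A_\pm=B_\pm'\pm iB_\pm$, whence $|A_\pm^{(\ell)}(s)|\le |B_\pm^{(\ell+1)}(s)|+|B_\pm^{(\ell)}(s)|\lesssim \<s\>^{-\ell-2}+\<s\>^{-\ell-1}\lesssim \<s\>^{-\ell-1}$. Finally, for $F_\pm$ itself, split $F_\pm(s)=s^{-1}e^{\pm is}-s^{-1}e^{-s}$: the second summand and its derivatives decay exponentially, while Leibniz applied to $s^{-1}e^{\pm is}$ produces a sum of terms $(\pm i)^{\ell-k}\binom{\ell}{k}(-1)^k k!\,e^{\pm is}/s^{k+1}$, each of modulus at most $C_\ell/s$ on $[1,\infty)$; smoothness of $F_\pm$ at the origin (visible from the Taylor expansion of $e^{\pm is}-e^{-s}=(1\pm i)s-s^2+\cdots$) handles $s\le 1$, giving $|F_\pm^{(\ell)}(s)|\lesssim \<s\>^{-1}$.

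No substantive obstacle is expected. The only delicate point is the bookkeeping of the apparent simple pole at $s=0$, which is why passing to the manifestly smooth form $s^{-1}\bigl(1-e^{-(1\pm i)s}\bigr)$ for $B_\pm$ is the cleanest route: once $B_\pm$ is handled, both $A_\pm$ and $F_\pm$ reduce to one-line consequences.
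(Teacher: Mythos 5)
Your proof is correct and follows essentially the same strategy as the paper's: write each function as a rational piece plus a piece carrying the exponentially decaying factor $e^{(-1\mp i)s}$, use Leibniz's rule on $s\ge 1$, and establish smoothness at $s=0$ via Taylor expansion. The only variation is that you bound $B_\pm$ first and then deduce the $A_\pm$ estimate from the identity $A_\pm=B_\pm'\pm iB_\pm$, whereas the paper bounds $A_\pm$ directly from the explicit formula $A_\pm(s)=s^{-2}\big((\pm is-1)+(s+1)e^{(-1\mp i)s}\big)$ and declares the other two cases similar; your reduction is a small simplification but the underlying mechanism is the same.
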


\begin{proof} We only prove the estimates of  $A_{\pm}(s)$ due to similarity. Firstly, we calculate that
$$
A_\pm(s)=s^{-2}\big((\pm is-1)+(s+1)e^{(-1\mp i)s}\big).
$$
For each $\ell\in \mathbb{N},$ it follows by Leibniz's  rule that
$$|A_\pm^{(\ell)}(s)|\lesssim s^{-\ell-2}\Big((s+1)+\sum_{k=0}^{\ell}s^k e^{-s}\Big), $$
which gives  $|A_\pm^{(\ell)}(s)|\lesssim s^{-\ell-1}$ for $s\ge 1$.  Additionally, by Taylor's expansion of $e^{(-1\mp)s}$, we obtain
$$A_\pm(s)=\sum_{k=0}^\infty (k+1-i)(-1\mp i)^{k+1}\frac{s^k}{(k+1)!},$$
which gives $A_\pm(s)\in C^\infty(\R)$.  Hence  $|A_\pm^{(\ell)}(s)|\lesssim s^{-\ell-1}$ for $s>0$ and $\ell\in \mathbb{N}$.
\end{proof}

Finally,  we record the following  well-known lemma,  e.g. see \cite[Lemma 3.8]{GoMo}.

\begin{lemma}
	\label{lemma3.7}
	Let $\alpha$ and $\beta$ satisfy $0<\alpha<n<\beta$. Then
	$$
	\int_{\R^n} \frac{1}{\<y\>^\beta|x-y|^{\alpha}} dy\lesssim \<x\>^{-\alpha}.
	$$
	\end{lemma}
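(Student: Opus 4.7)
The plan is to split the integral into geometric regions according to the relative sizes of $|y|$, $|x-y|$, and $\langle x\rangle$. Since $\langle x\rangle \sim 1$ when $|x|\lesssim 1$ and $\langle x\rangle \sim |x|$ when $|x|\gg 1$, I will treat these two regimes separately.

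For the easy case $|x|\lesssim 1$, the target bound $\langle x\rangle^{-\alpha}$ is just a constant, and it is enough to split the integral into $\{|x-y|\le 1\}$, where $\langle y\rangle^{-\beta}$ is bounded and the singularity $|x-y|^{-\alpha}$ is integrable near the origin thanks to $\alpha<n$, and $\{|x-y|>1\}$, where $|x-y|^{-\alpha}\le 1$ and the global decay $\langle y\rangle^{-\beta}$ is integrable thanks to $\beta>n$.

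For the main case $|x|\gtrsim 1$, I would decompose $\R^n$ into the three standard regions $\{|y|\le |x|/2\}$, $\{|x|/2<|y|<2|x|\}$, and $\{|y|\ge 2|x|\}$. On the first region $|x-y|\sim |x|$, so the factor $|x-y|^{-\alpha}$ pulls out as $|x|^{-\alpha}$ and the remaining integral $\int \langle y\rangle^{-\beta}dy$ converges since $\beta>n$. On the third region $|x-y|\sim |y|$ and $\langle y\rangle\sim |y|$, so the integrand is bounded by $|y|^{-\alpha-\beta}$; integrating over $|y|\ge 2|x|$ yields $|x|^{n-\alpha-\beta}$, which is $\lesssim |x|^{-\alpha}$ because $\beta>n$. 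On the intermediate annular region $\langle y\rangle\sim |x|$ so the factor $\langle y\rangle^{-\beta}$ pulls out as $|x|^{-\beta}$, and the remaining integral $\int_{|x-y|\lesssim |x|}|x-y|^{-\alpha}dy$ is of order $|x|^{n-\alpha}$ (using $\alpha<n$ to absorb the singularity at $y=x$), leaving a total of order $|x|^{n-\alpha-\beta}\lesssim |x|^{-\alpha}$.

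There is no real obstacle here: every region is controlled by the three hypotheses $\alpha<n$ (integrability near the singularity of $|x-y|^{-\alpha}$), $\beta>n$ (integrability of $\langle y\rangle^{-\beta}$ at infinity), and the combined condition $\alpha+\beta>n$ (which follows automatically) that governs the annular tail. The estimate then follows by summing the three bounds, each of which is $\lesssim \langle x\rangle^{-\alpha}$.
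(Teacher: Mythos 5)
Your proof is correct. The paper itself offers no proof of this lemma: it states it as ``the following well-known lemma'' and simply cites \cite[Lemma 3.8]{GoMo}, so there is no in-paper argument to compare against. Your three-region decomposition ($|y|\le |x|/2$, $|y|\sim |x|$, $|y|\ge 2|x|$), with the separate easy treatment of $|x|\lesssim 1$, is the standard way to establish this convolution-type estimate, and all the steps check out: $\alpha<n$ absorbs the local singularity of $|x-y|^{-\alpha}$, $\beta>n$ gives integrability of $\langle y\rangle^{-\beta}$ at infinity, and on each region the resulting power of $|x|$ is $\le -\alpha$ precisely because $\beta>n$. Nothing is missing.
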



\section{The proof  of Theorem \ref{theorem_1}}
\label{section_low}
In this section we consider the proof  of Theorem \ref{theorem_1}.   The stationary formula \eqref{stationary} of $W_-$ is  decomposed into the low and high energy parts as follows:
 fixed $\lambda_0>0$ small enough, let $\chi\in C_0^\infty(\mathbb R)$ be such that $\chi\equiv1$ on $(-\lambda_0/2,\lambda_0/2)$ and $\supp \chi\subset [-\lambda_0,\lambda_0]$. We define
\begin{align}
	\label{W^L_-}
	W^L_-&=\int_0^\infty \lambda^3 \chi(\lambda) R_V^+(\lambda^4)V\left(R_0^+(\lambda^4)-R_0^-(\lambda^4)\right)d\lambda,
\end{align}
\begin{align}
	\label{W^H_-}
	W^H_-&=\int_0^\infty \lambda^3 \Big(1-\chi(\lambda)\Big) R_V^+(\lambda^4)V\left(R_0^+(\lambda^4)-R_0^-(\lambda^4)\right)d\lambda.
\end{align}
Then
$W_-=I-\frac{2}{\pi i}\left(W_-^L+W_-^H\right)$.
In view of the decomposition, it suffices to estimate  $W^H_-$ and $W^L_-$,  separately.  Indeed, in  the work \cite[Proposition 4.1]{GoGr21},  it has been proved that high energy part  $W^H_-$ is bounded on $L^p(\R^3)$ for all  $1\le p\le \infty$  with the  decay rate $ \mu>5$ for $V(x)$.   Hence it only remains to deal with the low energy part $W_-^L$.

Now we will prove the following conclusion:

\begin{theorem}
\label{theorem_low_1}
Under the assumption in Theorem \ref{theorem_1}, the low energy part $W_-^L$ defined by \eqref{W^L_-} satisfies the same statement as that in Theorem \ref{theorem_1}.
\end{theorem}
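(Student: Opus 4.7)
Starting from Lemma \ref{lemma_3_2} and the low-energy expansion \eqref{lemma_3_3_1}, I would rewrite
\begin{align*}
W_-^L = \int_0^\infty \lambda^3 \chi(\lambda)\, R_0^+(\lambda^4)\, v\, M^{-1}(\lambda)\, v\, \big(R_0^+(\lambda^4) - R_0^-(\lambda^4)\big)\, d\lambda
\end{align*}
as a sum of six integral operators, one for each summand $QA_0Q$, $\lambda QA_{1,0}$, $\lambda A_{0,1}Q$, $\lambda\widetilde P$, $\lambda^2 A_2$ and $\Gamma_3(\lambda)$. Following the outline in the Introduction, I would group these six pieces into three classes: class (I) collects $QA_0Q$, $\lambda(QA_{1,0}+A_{0,1}Q)$ and $\lambda^2 A_2$; class (II) is the rank-one piece $\lambda\widetilde P$; class (III) is the error $\Gamma_3(\lambda)$. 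The goal is to show that classes (I) and (III) are bounded on $L^p(\R^3)$ for every $1\le p\le\infty$, while class (II) and its adjoint are bounded from $L^1(\R^3)$ to $L^{1,\infty}(\R^3)$. The conclusion of Theorem \ref{theorem_low_1} follows by summing.

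For class (I), the key is the cancellation $Q(v)=0$. Whenever $Q$ stands next to a free resolvent I would apply Lemma \ref{lemma_projection} to replace $QvR_0^\pm(\lambda^4)$ (resp.\ $R_0^\pm(\lambda^4)vQ$) by an integral operator whose kernel is bounded as $\lambda\to 0$, absorbing the singular $\lambda^{-1}$ appearing in \eqref{free_resolvent}. Writing $F_\pm(s)=e^{\pm is}B_\pm(s)$ via Lemma \ref{lemma_AB}, the remaining $\lambda$-integral is oscillatory with phases of the form $\pm\lambda(|x|\pm|y|)$, modulo harmless shifts from the $v$-integration variables. Two integrations by parts in $\lambda$, with amplitudes controlled by the derivative bounds in Lemma \ref{lemma_AB} and the weight losses paid for by $\mu>11$, produce kernels dominated by \eqref{kernel type-3}, to which Proposition \ref{prop_2_2} applies. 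The translation invariance of $L^p$-norms together with Minkowski's integral inequality dispose of the $v$-convolutions hidden inside the $A_{\bullet}$'s. For class (III), the operator-norm bound $\||\partial_\lambda^\ell\Gamma_3(\lambda)|\|_{L^2\to L^2}\lesssim \lambda^{3-\ell}$ from \eqref{lemma_3_3_4}, combined with the same two $\lambda$-integrations by parts (the three factors of $\lambda$ being exactly what is needed to offset the two $\lambda^{-1}$'s from the resolvents and the $\lambda^3$ in front), yields an admissible kernel, and Schur's test, Lemma \ref{lemma_2_1}, concludes.

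The main obstacle is class (II). Because $\widetilde P$ is rank one and carries no $Q$, the reduction of Lemma \ref{lemma_projection} is unavailable on either side, and a genuine principal-value-type singularity survives the $\lambda$-integration. My plan is to compute the class (II) kernel $K_P(x,y)$ explicitly: after writing $F_\pm(s)=e^{\pm is}B_\pm(s)$, the $\lambda$-integral becomes a superposition of one-dimensional oscillatory integrals with phases $\pm(|x-u_1|\pm|y-u_2|)$ against a $v^2\otimes v^2$ convolution in the $u_j$'s. Extracting the leading contribution on the region $\{\,||x|-|y||\ge 1\,\}$ should yield the representation anticipated in \eqref{First-fomula},
\begin{align*}
K_P(x,y) = -\frac{1+i}{4\pi}\,G(x)\,\frac{|x|\,\chi_{\{||x|-|y||\ge 1\}}}{|x|^4-|y|^4}\,G(y) + R(x,y),
\end{align*}
where $G(x)=|x|\,\|V\|_{L^1}^{-1}\int_{\R^3}|V|(u)|x-u|^{-1}du$ is bounded (uniformly in $x$, using $\mu>3$) and $R$ is a remainder dominated by \eqref{kernel type-1}. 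Since $G$ is just a bounded multiplier, the principal part reduces to the model operator of Proposition \ref{prop_2_3}, while the remainder is handled by Proposition \ref{prop_2_2}; the analogous statement for $(W_-^L)^*$ follows from the adjoint part of Proposition \ref{prop_2_3} and the symmetric structure of $\widetilde P$. The delicate step, where I expect the bulk of the technical work to lie, is the explicit evaluation of the $\lambda$-integral in the complementary region $\{\,||x|-|y||<1\,\}$ and the verification that all error terms do satisfy \eqref{kernel type-1} under $\mu>11$, which is exactly the decay required both for the expansion of Lemma \ref{lemma_3_3} and for the weighted pointwise integrals of Lemma \ref{lemma3.7} type arising in the remainder $R$.
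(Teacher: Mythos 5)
Your proposal matches the paper's proof in both structure and execution: the same six-term expansion of $W_-^L$ via Lemma~\ref{lemma_3_2} and Lemma~\ref{lemma_3_3}, the same grouping into classes (I)--(III), the use of Lemma~\ref{lemma_projection} together with the $A_\pm,B_\pm$ bounds of Lemma~\ref{lemma_AB} and repeated integration by parts (the paper implements this via a dyadic partition of $\chi$) to reach the kernel bounds of Proposition~\ref{prop_2_2} for class (I), the $\Gamma_3$ derivative bounds \eqref{lemma_3_3_4} plus integration by parts to obtain an admissible kernel for class (III), and the explicit computation leading to the representation \eqref{First-fomula} with Proposition~\ref{prop_2_3} and Proposition~\ref{prop_2_2} handling the principal part and remainder of class (II). The only small discrepancies are cosmetic: the paper uses more than two integrations by parts (combined with a dyadic decomposition and, for $K_3$, an interpolation between the bounds $2^{3N}$ and $\langle|x|\pm|y|\rangle^{-3}$), and the region $\{||x|-|y||<1\}$ in class (II) is dispatched by the elementary pointwise bound $|K_P(x,y)|\lesssim\langle x\rangle^{-1}\langle y\rangle^{-1}$ rather than a delicate computation.
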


Throughout this section, we thus always assume that $|V(x)|\lesssim \<x\>^{-\mu}$ with $\mu>11$ and zero is a regular point of $H$.
Substituting the expansion \eqref{lemma_3_3_1} into \eqref{lemma_3_2_1},  if $0<\lambda\le \lambda_0$,  then we have
\begin{align*}
R^+(\lambda^4)V
&=R_0^+(\lambda^4)v\Big\{QA_{0}Q+\lambda\left(QA_{1,0}+A_{0,1}Q\right)
+\lambda\widetilde P+\lambda^2A_2+\Gamma_3(\lambda)\Big\}v.
\end{align*}
Hence  $W^L_-$ can be written as follows:
\begin{align}
\label{wave_operator_regular}
W^L_-=T_{K_{0}}+T_{K_{1,0}}+T_{K_{0,1}}+T_{K_{P}}+T_{K_{2}}+T_{K_{3}},
\end{align}
where  the kernels of six operators in the right side of the \eqref{wave_operator_regular}  are given by the following integrals:
\begin{equation}\label{Kernel integral}
	\begin{split}
&K_{0}(x,y)=\int_0^\infty \lambda^3\chi(\lambda)\Big(R_0^+(\lambda^4)vQA_{0}Qv[R_0^+-R_0^-](\lambda^4)\Big)(x,y)d\lambda,\\
&K_{1,0}(x,y)=\int_0^\infty \lambda^4\chi(\lambda)\Big(R_0^+(\lambda^4)vQA_{1,0}v[R_0^+-R_0^-](\lambda^4)\Big)(x,y)d\lambda,\\
&K_{0,1}(x,y)=\int_0^\infty \lambda^4\chi(\lambda)\Big(R_0^+(\lambda^4)vA_{0,1}Qv[R_0^+-R_0^-](\lambda^4)\Big)(x,y)d\lambda,\\
&K_{P}(x,y)=\int_0^\infty \lambda^{4} \chi(\lambda)\Big(R_0^+(\lambda^4)v\widetilde{P}v[R_0^+-R_0^-](\lambda^4)\Big)(x,y)d\lambda,\\
&K_{2}(x,y)=\int_0^\infty \lambda^5 \chi(\lambda)\Big(R_0^+(\lambda^4)vA_2v[R_0^+-R_0^-](\lambda^4)\Big)(x,y)d\lambda,\\
&K_{3}(x,y)=\int_0^\infty \lambda^3 \chi(\lambda)\Big(R_0^+(\lambda^4)v\Gamma_3(\lambda)v[R_0^+-R_0^-](\lambda^4)\Big)(x,y)d\lambda.
\end{split}
\end{equation}
In view of this formula \eqref{wave_operator_regular} for $W^L_-$, Theorem \ref{theorem_low_1}  follows from the corresponding boundedness of these six integral operators.
By virtue of  Lemma \ref{lemma_projection} and Remark \ref{remark_lemma_projection},
 the six operators $T_{K_j}, T_{K_P}, T_{K_{ij}}$ are classified into the following three cases:

(Class I).  $T_{K_{0}},T_{K_{1,0}},T_{K_{0,1}}$, $T_{K_{2}}$, where all integrands  can be dominated by $ C\lambda^3$  for fixed $x,y$ in their corresponding kernel integrals \eqref{Kernel integral}. (For short, we may set $O_{x,y}(\lambda^3)$ below);

(Class II). $T_{K_{P}}$ with  $O_{x,y}(\lambda^2)$;

(Class III). $T_{K_3}$ with   $O_{x,y}(\lambda^4)$.

In particular, all the six operators above are in fact well-defined integral operators.
Note that, since $|v(x)|\lesssim \<x\>^{-\mu/2}$ with $\mu>11$, we have
$$
\norm{\<x\>^{k}vBv\<x\>^k f}_{L^1(\R^3)}\le \norm{\<x\>^kv}_{L^2}^2\norm{B}_{L^2\to L^2}\norm{f}_{L^\infty}\lesssim \norm{\<x\>^{2k}V}_{L^1(\R^3)}\norm{f}_{L^\infty(\R^3)},
$$
for all $B=QA_{0}Q, \ QA_{1,0}, \ A_{0,1}Q,\ \widetilde P,\ A_2, \ \Gamma_3(\lambda)$ and $k<(\mu-3)/2$.
Hence, in all cases, $\<x\>^kvBv\<x\>^k$ is an absolutely bounded integral operator for any $k\le 3$ at least, satisfying
\begin{align}
\label{vBv}
\int_{\R^6}\<x\>^k|(vBv)(x,y)|\<y\>^kdxdy\lesssim \norm{\<x\>^{2k}V}_{L^1(\R^3)}<\infty,
\end{align}
where we use the notation $(vBv)(x,y)=v(x)B(x,y)v(y).$

Now let us finish the proof of Theorem \ref{theorem_low_1} in the following three propositions corresponding to the three classes (I)-(III) above.

\begin{proposition}
\label{proposition_regular_1}
Let $K\in \{K_{0},K_{1,0},K_{0,1},K_{2}\}$. Then $T_{K}\in \mathbb B(L^p)$ for all $1\le p\le \infty$.
\end{proposition}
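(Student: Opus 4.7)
The plan is to reduce each kernel $K \in \{K_0,K_{1,0},K_{0,1},K_2\}$ to the pointwise bound \eqref{kernel type-3} and then invoke Proposition~\ref{prop_2_2}; the key is to extract from the $Q$-projections (or, for $K_2$, from the explicit $\lambda^2$) the gain $O_{x,y}(\lambda^3)$ in the integrand characteristic of Class~(I).

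\emph{Step 1 (extracting $\lambda$-gain).} For $K_0,K_{1,0},K_{0,1}$, I would apply Lemma~\ref{lemma_projection} to each $Q$-adjacent resolvent, exploiting $Qv=0$: the Taylor expansion of $R_0^\pm(\lambda^4,x,y)$ in Lemma~\ref{lemma_3_4} has leading term $F_\pm(\lambda|x|)/(8\pi\lambda)$, which is annihilated by the neighboring $Q$, so the Taylor remainder contributes a pointwise $O_{x,y}(1)$ factor of $F_\pm^{(1)}$-type in place of the $O(\lambda^{-1})$ resolvent. For $K_2$ the explicit $\lambda^2$ already absorbs the two $\lambda^{-1}$'s from the two resolvents. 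After these reductions each of the four kernels takes the schematic form
\[
K(x,y)=\iint v(u_1)\,v(u_2)\,B(u_1,u_2)\,\mathcal{L}(x,u_1,u_2,y)\,du_1\,du_2,
\]
where $B\in\{QA_0Q,\,QA_{1,0},\,A_{0,1}Q,\,A_2\}$ satisfies \eqref{vBv}, and
\[
\mathcal{L}(x,u_1,u_2,y)=\iint_{[0,1]^2}\!\!\int_0^\infty \lambda^3\chi(\lambda)\,\Phi\bigl(\lambda,\,x-\theta_1 u_1,\,y-\theta_2 u_2\bigr)\,d\lambda\,d\theta_1 d\theta_2,
\]
with $\Phi$ a product of expressions $F_\pm^{(j)}(\lambda|\cdot|)$ whose overall size in $\lambda$ is $O_{x,y}(\lambda^0)$, so that the $\lambda$-integrand is genuinely $O_{x,y}(\lambda^3)$.

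\emph{Step 2 (pointwise estimate on $\mathcal{L}$).} Writing $F_\pm(s)=e^{\pm is}B_\pm(s)$ with $|B_\pm^{(\ell)}(s)|\lesssim \langle s\rangle^{-\ell-1}$ (Lemma~\ref{lemma_AB}), the product of $R_0^+$ with $R_0^+-R_0^-$ expands into oscillatory factors carrying the four phases $e^{i\lambda(|x'|\pm|y'|)}$, where $x'=x-\theta_1 u_1$, $y'=y-\theta_2 u_2$. Repeated integration by parts in $\lambda$ against each phase — using the smooth compactly supported cutoff $\chi$ and the symbol-type decay of $B_\pm^{(\ell)}$ — produces
\[
|\mathcal{L}(x,u_1,u_2,y)|\lesssim \min\Bigl\{\langle x'\rangle^{-1}\langle y'\rangle^{-1}\langle |x'|\pm|y'|\rangle^{-2},\ \langle|x'|\pm|y'|\rangle^{-4}\Bigr\},
\]
uniformly in $\theta_1,\theta_2\in[0,1]$. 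This is precisely condition \eqref{kernel type-3} for the shifted variables $(x',y')$.

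\emph{Step 3 (Minkowski and Proposition~\ref{prop_2_2}).} By Minkowski's integral inequality applied to the $(u_1,u_2)$- and $(\theta_1,\theta_2)$-integrations, together with the translation invariance of the $L^p$-norm on $\R^3$,
\[
\|T_Kf\|_{L^p(\R^3)}\lesssim \iint |v(u_1)v(u_2)B(u_1,u_2)|\,du_1du_2\ \cdot \sup_{\theta_1,\theta_2}\|T_{\widetilde{\mathcal L}_{\theta_1,\theta_2}}\|_{L^p\to L^p}\ \cdot\|f\|_{L^p},
\]
where $\widetilde{\mathcal L}_{\theta_1,\theta_2}(x,y)$ is the translation-invariant model kernel obtained by setting $u_1=u_2=0$. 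Since $\widetilde{\mathcal L}_{\theta_1,\theta_2}$ satisfies \eqref{kernel type-3} uniformly in $(\theta_1,\theta_2)\in[0,1]^2$ by Step~2, Proposition~\ref{prop_2_2} gives $\|T_{\widetilde{\mathcal L}_{\theta_1,\theta_2}}\|_{L^p\to L^p}\lesssim 1$ for every $1\le p\le\infty$, while \eqref{vBv} bounds the remaining double integral. This completes the proof.

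The main obstacle is the oscillatory-integral estimate in Step~2, in particular the unweighted decay $\langle|x'|\pm|y'|\rangle^{-4}$: it requires four integrations by parts in $\lambda$ with careful bookkeeping of the four sign combinations in $|x'|\pm|y'|$ and the matching $B_\pm^{(\ell)}$-derivatives, using that the boundary contributions vanish thanks to $\chi\in C_0^\infty$. The rest (the Minkowski reduction and invocation of Proposition~\ref{prop_2_2}) is routine.
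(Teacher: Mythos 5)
Your overall strategy coincides with the paper's: use Lemma~\ref{lemma_projection} (i.e.\ $Qv=0$ plus the Taylor formula of Lemma~\ref{lemma_3_4}) to upgrade each $Q$-adjacent factor $F_\pm$ to $\lambda F_\pm^{(1)}$, so that each integrand becomes $O_{x,y}(\lambda^3)$; reduce via Minkowski and translation invariance to a model kernel $G^\pm_{\alpha\beta}(X,Y)$; show the pointwise bound \eqref{kernel type-3}; and conclude by Proposition~\ref{prop_2_2}. That is exactly the paper's reduction, including the same formula \eqref{K'_11} and \eqref{G_11'}.

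However, Step~2 has a genuine gap. You propose to obtain
\[
|G^\pm_{\alpha\beta}(x,y)|\lesssim \min\bigl\{\<x\>^{-1}\<y\>^{-1}\<|x|\pm|y|\>^{-2},\ \<|x|\pm|y|\>^{-4}\bigr\}
\]
by ``repeated integration by parts in $\lambda$ with careful bookkeeping,'' but a direct integration by parts with a fixed number of derivatives does not give the weighted bound $\<x\>^{-1}\<y\>^{-1}\<|x|\pm|y|\>^{-2}$ without a loss. Concretely, with $G^\pm_{11}=\int_0^{\lambda_0}\lambda^3\chi(\lambda)e^{i\lambda(|x|\pm|y|)}A_+(\lambda|x|)A_\pm(\lambda|y|)d\lambda$ and $|A_\pm^{(\ell)}(s)|\lesssim\<s\>^{-\ell-1}$, two integrations by parts against the phase produce, in the worst Leibniz term ($k_1=k_2=1$), the integrand $\lambda^3|x|\<\lambda|x|\>^{-2}|y|\<\lambda|y|\>^{-2}\sim\lambda^{-1}|x|^{-1}|y|^{-1}$ for $\lambda\gtrsim\min(|x|,|y|)^{-1}$, whose $\lambda$-integral picks up a logarithm of $\min(|x|,|y|)$. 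Hence the naive estimate is $|G|\lesssim||x|\pm|y||^{-2}\<x\>^{-1}\<y\>^{-1}\log\<\min(|x|,|y|)\>$, and one cannot remove this log by interpolating against the unweighted $\<|x|\pm|y|\>^{-4}$ bound, because in the region $|x|\sim|y|\gg||x|-|y||$ the unweighted bound is weaker than the desired weighted one. The paper's device to avoid exactly this loss is the dyadic decomposition $\chi=\sum_{N\le N_0}\widetilde\chi_N$: within each piece $E^\pm_N$ one integrates by parts $k$ times for $k\in\{0,1,3,5\}$ depending on the relative size of $2^N$ and $||x|\pm|y||^{-1}$, and the geometric sums \eqref{G-ABC} then give precisely \eqref{G_11'} with no logarithmic correction. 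Your write-up never introduces this Littlewood--Paley/dyadic step, and without it Step~2 as stated fails.

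Two minor inaccuracies worth correcting: (a) after the $Q$-reduction there are only two phases $e^{i\lambda(|x'|\pm|y'|)}$ (one per sign of $R_0^\pm$), not four, because $F_\pm(s)=e^{\pm is}B_\pm(s)$ and $F_\pm^{(1)}(s)=e^{\pm is}A_\pm(s)$ package the decaying exponential into a single symbol factor; (b) the Minkowski bound in Step~3 should carry the extra $|u_1|^\alpha|u_2|^\beta$ weights coming from the $\<u_j,w(\cdot)\>$ factors in Lemma~\ref{lemma_projection}, as in \eqref{K'_11}, and is then controlled by \eqref{vBv} with $k\geq 1$.
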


\begin{proof}
All the kernels $K_{0},K_{1,0},K_{0,1}$ and $K_{2}$ can be written as the difference of the following two kernels
\begin{align}
\label{G}
K^\pm_{\alpha\beta}(x,y):=\int_0^\infty \lambda^{5-\alpha-\beta}\chi(\lambda)\Big(R_0^+(\lambda^4)vQ_\alpha BQ_\beta vR_0^\pm(\lambda^4)\Big)(x,y)d\lambda,
\end{align}
with some $B\in \mathbb B(L^2)$ so that $Q_\alpha BQ_\beta $ is absolutely bounded, where  we set $Q_1=Q$, $Q_0=I$ (the identity) and
\begin{align*}
(\alpha,\beta)&=\begin{cases}(1,1)&\text{for}\quad K=K_{0},\\(1,0)&\text{for}\quad K=K_{1,0},\\(0,1)&\text{for}\quad K=K_{0,1},\\(0,0)&\text{for}\quad K=K_{2}.\end{cases}
\end{align*}
 Then we shall show $T_{K^\pm_{\alpha\beta}}$ satisfies the desired assertion for all pairs $(\alpha,\beta)$ above. To this end,
we consider two cases (i) $\alpha=\beta=1$, (ii) $\beta=0$ or $\alpha=0$.

\underline{{\it Case (i)}}. By Lemma \ref{lemma_projection} and Remark \ref{remark_lemma_projection}, 	 we can rewrite $K^\pm_{11}$ as follows.

\begin{align}
\label{K_11}
K^\pm_{11}(x,y)= &\frac{1}{64\pi^2}\int_0^\infty\lambda^3 \chi(\lambda)\Big[\int_{\R^6}\Big(\int_0^1 \<u_1,w(x-\theta_1 u_1)\>F_+^{(1)}(\lambda|x-\theta_1u_1|)d\theta_1\Big)\times\nonumber\\
&(vQA_0Qv)(u_1,u_2)\Big(\int_0^1 \<u_2,w(y-\theta_2 u_2)\>F_\pm^{(1)}(\lambda|y-\theta_2u_2|)d\theta_2\Big)du_1du_2\Big]d\lambda,
\end{align}
where $F_\pm^{(1)}(s)$ is the first order derivative of  $F_\pm(s)=s^{-1}(e^{\pm is}-e^{-s})$, $\langle\cdot,\cdot\rangle$ denotes the inner product of $\R^3$, and $w(x)=\frac{x}{|x|}$ for $x\neq0$ and $w(x)=0$ for $x=0$.

By changing the order of integrals in \eqref{K_11}, then it follows that
\begin{align*}
|K^\pm_{11}(x,y)|\le\frac{1}{64\pi^2}&\int_{\R^6\times[0,1]^2}\Big(|u_1|v(u_1)|(QA_0Q)(u_1,u_2)||u_2|v(u_2)\Big)\times\\
&\Big|\int_0^\infty\lambda^3 \chi(\lambda)F_+^{(1)}(\lambda|x-\theta_1u_1|)F_\pm^{(1)}(\lambda|y-\theta_2u_2|)d\lambda\Big|dud\theta,
\end{align*}
where $(u,\theta)=(u_1,u_2,\theta_1,\theta_2)\in \R^6\times[0,1]^2$.

Let
\begin{align}\label{G_11}
G^\pm_{11}(X,Y)=\int_0^\infty\lambda^3 \chi(\lambda)F_+^{(1)}(\lambda|X|)F_\pm^{(1)}(\lambda|Y|)d\lambda, \ \ X, Y \in \R^3.
\end{align}
Then
\begin{align}\label{K'_11}|K^\pm_{11}(x,y)|\lesssim\int_{\R^6\times[0,1]^2}\Big(|u_1|\big|(vQA_0Qv)(u_1,u_2)\big||u_2|\Big)\big|G^\pm_{11}(x-\theta_1u_1,y-\theta_2u_2)\big|dud\theta.
\end{align}
Denote by $T_{G_{11}^\pm}$ the integral operator associated with $G_{11}^\pm(x,y)$. Then by \eqref{vBv} and  \eqref{K'_11}, Minkowski's inequality and the translation invariance of $L^{p}$-norm, we can reduce the $L^p$-boundedness of $T_{K_{11}^\pm}$ to  the $L^p$-boundedness of $T_{G_{11}^\pm}$ based on the following inequality:
\begin{align*}
\|T_{K_{11}^\pm}\|_{L^p\to L^p}\lesssim\big\||x|^2V\big\|_{L^1}\ \|QA_0Q\|_{L^2\to L^2}\ \|T_{G_{11}^\pm}\|_{L^p\to L^p}, \ \ 1\le p\le\infty.
\end{align*}
Indeed, to establish the $L^p$-boundedness of $T_{G_{11}^\pm}$ for all $1\le p\le \infty$,  by Proposition \ref{prop_2_2} it suffices to prove that $G^\pm_{11}(x,y)$ satisfies
the following point-wise estimate:
\begin{equation}
\label{G_11'}
|G_{11}^\pm(x,y)|\lesssim \min\big\{ \<x\>^{-1}\<y\>^{-1}\big\langle|x|\pm|y|\big\rangle^{-2}, \ \big\langle|x|\pm|y|\big\rangle^{-4}\big\}, \ \ x, \ y\in \R^3.
\end{equation}

Now we rewrite $G^\pm_{11}(x,y)$ as an oscillatory integral:
\begin{align}\label{G_11''}
G^\pm_{11}(x,y)=\int_0^\infty\lambda^3 \chi(\lambda)e^{i\lambda(|x|\pm|y|)}A_+(\lambda|x|)A_\pm(\lambda|y|)d\lambda, \ \ x,\ y\in \R^3.
\end{align}
where $$A_\pm(s):=e^{\mp is}F_\pm^{(1)}(s)=s^{-2}\big((\pm is-1)+(s+1)e^{(-1\mp i)s}\big),$$
which by Lemma \ref{lemma_AB}, satisfies the following estimates
 \begin{align}
 \label{A-estimate}
|A^{(\ell)}_{\pm}(s)|\lesssim\< s\>^{-\ell-1},\ \  s>0, \ \ell\in\mathbb{N}_0.
\end{align}
To estimate the integral \eqref{G_11''}, we decompose $\chi$ by using the dyadic partition of unity $\{\varphi_N\}$ defined in Subsection \ref{subsection_notation}, as
$$
\chi(\lambda)=\sum_{N=-\infty}^{N_0}\widetilde \chi_N(\lambda),\quad \widetilde \chi_N(\lambda):=\chi(\lambda)\varphi_N(\lambda),\quad \lambda>0,
$$
where $N_0\lesssim \log\lambda_0\lesssim-1$ since $\supp \chi\subset [-\lambda_0,\lambda_0]$.
Then we decompose
\begin{align*}
G^\pm_{11}(x,y)=\sum_{N=-\infty}^{N_0}\int_0^\infty e^{i\lambda(|x|\pm|y|)}\Psi_N(\lambda, x,y)d\lambda:=\sum_{N=-\infty}^{N_0}E^\pm_N(x,y),
\end{align*}
where $$\Psi_N(\lambda, x,y):=\lambda^3 \widetilde \chi_N(\lambda)A_+(\lambda|x|)A_\pm(\lambda|y|). $$
Note that $\supp\widetilde\chi_N\subset [2^{N-2},2^N]$ and
\begin{align}
\label{truncated estimates}
|\partial_\lambda^\ell \widetilde \chi_N(\lambda)|\lesssim 2^{-N\ell}, \ \ell\in \mathbb{N}_0.
\end{align} Hence  by Leibniz's formula, \eqref{A-estimate}  and  \eqref{truncated estimates}, we have
$$|\partial_\lambda^k\Psi_N(\lambda, x,y)|\lesssim 2^{(3-k)N} \big\langle2^N |x|\big\rangle^{-1}\big\langle2^N |y|\big\rangle^{-1}, \ \ k\in\mathbb{N}_0.
$$
Thus by $k$-times integration by parts for $E^\pm_N(x,y)$, it follows that
\begin{align}
\label{E_N estimate}
|E^\pm_N(x,y)|\lesssim 2^{(4-k)N}\big||x|\pm|y|\big|^{-k}\big\langle2^N |x|\big\rangle^{-1}\big\langle2^N |y|\big\rangle^{-1}, \ k\in\mathbb{N}_0,
\end{align}
which leads to the following estimates for $N\le N_0$:
\begin{align*}
|E^\pm_N(x,y)|\lesssim&\begin{cases}\frac{2^{2N} }{\<x\>\<y\>}&  \text{by}\ k=0 \  \text{of}  \
\eqref{E_N estimate};\\ \frac{2^N}{1+2^{2N}(|x|\pm|y|)^2}\frac{1}{\<x\>\<y\>||x|\pm|y||}&  \text{by}\ k=1,3\  \text{of}  \
\eqref{E_N estimate}; \\ \frac{2^N}{1+2^{2N}(|x|\pm|y|)^2}\frac{1}{||x|\pm|y||^3}& \text{by}\ k=3,5 \  \text{of}  \
\eqref{E_N estimate}.\end{cases}
\end{align*}
So we get that
 \begin{align}
 \label{G-ABC}
|G^\pm_{11}(x,y)|\le \sum_{N=-\infty}^{N_0}|E^\pm_N(x,y)|\lesssim &\begin{cases}\frac{1}{\<x\>\<y\>};\\ \frac{1}{\<x\>\<y\>(|x|\pm|y|)^2};\\ \frac{1}{(|x|\pm|y|)^4}.\end{cases}
\end{align}
Therefore we have
$$|G^\pm_{11}(x,y)|\lesssim\frac{1}{\<x\>\<y\>}
\lesssim\min\Big\{\frac{1}{\<x\>\<y\>\big\langle|x|\pm|y|\big\rangle^2},\ \frac{1}{\big\langle|x|\pm|y|\big\rangle^4}\Big\},$$
if $\big||x|\pm|y|\big|\le 1$.  On the other hand, if $\big||x|\pm|y|\big|\ge 1$, then it is clear from \eqref{G-ABC} again that
$$|G^\pm_{11}(x,y)|\lesssim\min\Big\{\frac{1}{\<x\>\<y\>\big\langle|x|\pm|y|\big\rangle^2},\ \frac{1}{\big\langle|x|\pm|y|\big\rangle^4}\Big\}.$$
Thus we  obtain the desired estimate \eqref{G_11'}.

\underline{{\it Case (ii)}}. Let $\alpha=0$ or $\beta=0$. As in the case (i), it similarly follows from \eqref{free_resolvent} and Lemma \ref{lemma_projection} that
\begin{align}\label{K_ab}
|K^\pm_{\alpha\beta}(x,y)|\lesssim \int_{\R^6\times[0,1]^2}\big(|u_1|^\alpha\ |(vQ_{\alpha} BQ_{\beta} v)(u_1,u_2)|\ |u_2|^\beta \big)|G^\pm_{\alpha\beta}(x-\theta_1u_1,y-\theta_2u_2)|dud\theta,
\end{align}
where $(\alpha,\beta)=(1,0), (0,1), (0,0)$,  and
\begin{align*}
&G^\pm_{\alpha\beta}(X,Y)=\int_0^\infty\lambda^{5-\alpha-\beta} \chi(\lambda)F_+^{(\alpha)}(\lambda|X|)F_\pm^{(\beta)}(\lambda|Y|)d\lambda, \ \ X, \ Y \in\R^3.
\end{align*}
Then by using the same arguments as above,  we can obtain the same estimate \eqref{G_11'} as for $G^\pm_{\alpha\beta}$ and then the same $L^p$-boundedness of $T_{K^\pm_{\alpha\beta}}$ for all $1\le p\le \infty$. Hence this completes the proof of Proposition \ref{proposition_regular_1}.
\end{proof}

Next we consider the operator $T_{K_{3}}$ in the class (III).
\begin{proposition}
\label{proposition_regular_2}
The operator $T_{K_{3}}$ satisfies the same statement as that in Proposition \ref{proposition_regular_1}.
\end{proposition}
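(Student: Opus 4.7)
The plan is to bound $|K_3(x,y)|$ by an admissible kernel in the sense of Section \ref{subsection_integral_operators}, after which Schur's test (Lemma \ref{lemma_2_1}) yields $T_{K_3}\in\mathbb{B}(L^p(\R^3))$ for all $1\le p\le\infty$; equivalently, aiming for a pointwise bound of type \eqref{kernel type-2} would bring us into the scope of Proposition \ref{prop_2_2}. Substituting the free resolvent formula \eqref{free_resolvent} together with the factorization $F_\pm(s)=e^{\pm is}B_\pm(s)$ from Lemma \ref{lemma_AB}, I would first write
\[
K_3(x,y) = \tfrac{1}{64\pi^2}\int_0^\infty\!\lambda\chi(\lambda)\!\int_{\R^6}\!(v\Gamma_3(\lambda)v)(z_1,z_2)\bigl[e^{i\lambda(r_1+r_2)}B_+(\lambda r_1)B_+(\lambda r_2)-e^{i\lambda(r_1-r_2)}B_+(\lambda r_1)B_-(\lambda r_2)\bigr]dz_1dz_2\,d\lambda,
\]
with $r_1=|x-z_1|$ and $r_2=|z_2-y|$, exposing the two oscillatory phases.

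The core step is to integrate by parts in $\lambda$ a few times on each term (boundary terms vanishing by compact support of $\chi$), producing a factor $(r_1\pm r_2)^{-k}$ inside the $(z_1,z_2)$-integral. By Leibniz, the $k$ derivatives distribute among $\lambda\chi(\lambda)$, $B_+(\lambda r_1)$, $B_\pm(\lambda r_2)$, and $\Gamma_3(\lambda)$; Lemma \ref{lemma_AB} gives $|\partial_\lambda^j B_\pm(\lambda r)|\lesssim r^j\<\lambda r\>^{-j-1}$, while Lemma \ref{lemma_3_3} constrains the derivatives on $\Gamma_3$ to order $j_4\le 3$ with $\|\partial_\lambda^{j_4}\Gamma_3(\lambda)\|_{L^2\to L^2}\lesssim\lambda^{3-j_4}$, capping $k$ at $4$. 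Cauchy--Schwarz in $(z_1,z_2)$ (via the absolute boundedness of $|\partial_\lambda^{j_4}\Gamma_3|$) then reduces each term to $L^2$-norms of the form
\[
\bigl\|v(z)|x-z|^{j_2}\<\lambda|x-z|\>^{-j_2-1}\bigr\|_{L^2(dz)} \lesssim \min\bigl(\<x\>^{j_2},\,\lambda^{-j_2-1}\<x\>^{-1}\bigr),
\]
the decaying second bound coming from Lemma \ref{lemma3.7}. The hypothesis $\mu>11$ is essential here: it ensures $\int v(z)^2|z|^{2j_2}\,dz<\infty$ for $j_2$ as large as $4$.

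To convert $(r_1\pm r_2)^{-k}$ into honest decay in $\bigl||x|\pm|y|\bigr|$, I would split $(z_1,z_2)$-space into a near region $|z_1|+|z_2|\le \tfrac12\bigl||x|\pm|y|\bigr|$ (where $|r_1\pm r_2|\gtrsim \bigl||x|\pm|y|\bigr|$) and its complement (where the rapid decay $v\lesssim\<\cdot\>^{-\mu/2}$ supplies the compensating decay). Splitting the $\lambda$-integral at the scales $\<x\>^{-1}$ and $\<y\>^{-1}$ and bookkeeping the min bounds produces an admissible pointwise estimate on $|K_3(x,y)|$. The principal obstacle will be the $-$ oscillatory term in the diagonal regime $|x|\approx|y|$, where $(r_1-r_2)^{-k}$ is not locally integrable: in the region $\bigl||x|-|y|\bigr|\le 1$ I would therefore supplement with a direct Cauchy--Schwarz bound (no integration by parts) yielding $|K_3(x,y)|\lesssim \<x\>^{-1}\<y\>^{-1}$ via $\int_0^{\lambda_0}\lambda^4\,d\lambda<\infty$, and combine it with the IBP estimate on $\bigl||x|-|y|\bigr|\ge 1$ to secure admissibility on all of $\R^3\times\R^3$.
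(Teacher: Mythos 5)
Your broad strategy — write $K_3$ as a $\lambda$-oscillatory integral, integrate by parts exploiting the derivative bounds on $\Gamma_3(\lambda)$ from Lemma \ref{lemma_3_3_4} and on $B_\pm$ from Lemma \ref{lemma_AB}, use Cauchy--Schwarz and Lemma \ref{lemma3.7} for the spatial integrals, and treat the diagonal regime $\bigl||x|-|y|\bigr|\le 1$ separately — is the right one, and it is close in spirit to the paper's proof. However, two concrete choices you make create gaps that the paper's version deliberately avoids.

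First, you integrate by parts against the phase $e^{i\lambda(r_1\pm r_2)}$ with $r_j$ depending on the inner variables $z_1,z_2$, which leaves the factor $(r_1\pm r_2)^{-k}$ \emph{inside} the $(z_1,z_2)$-integral. For the ``$-$'' term the set $\{r_1\approx r_2\}$ is a nontrivial shell even when $\bigl||x|-|y|\bigr|$ is large, so $(r_1-r_2)^{-k}$ is not locally integrable there, and your near/far split in $(z_1,z_2)$ only handles the near region cleanly; the far region needs a genuinely different bound (with no IBP at all), and whether the pieces stitch together into an admissible kernel is left entirely open. The paper sidesteps this by pulling out the \emph{main} phase $e^{i\lambda(|x|\pm|y|)}$ and absorbing the bounded remainder $e^{i\lambda\Phi^\pm}$, with $\Phi^\pm(x,y,u_1,u_2)=(|x-u_1|-|x|)\pm(|y-u_2|-|y|)$, into a smooth amplitude $b^\pm(\lambda,x,y)$; then $|\Phi^\pm|\lesssim\<u_1\>+\<u_2\>$, so each $\lambda$-derivative of the phase factor is swallowed by the fast decay of $v$, and one obtains the unified bound $|\partial_\lambda^\ell b^\pm|\lesssim\lambda^{-\ell-2}\<x\>^{-1}\<y\>^{-1}$ (the paper's \eqref{proposition_regular_2_proof_1}), which after IBP immediately produces $\bigl(|x|\pm|y|\bigr)^{-k}$ with no spatial singularity to disentangle.

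Second, your assertion that the boundary terms vanish ``by compact support of $\chi$'' is not correct: $\chi\equiv 1$ near $0$, so $\chi$ is not supported away from $\lambda=0$, and after enough integrations by parts the derivatives on $\lambda\chi(\lambda)$ and on $\Gamma_3(\lambda)$ (whose third derivative is only $O(1)$ at $0$) combine to leave a nonzero limit. The paper handles this cleanly with a dyadic decomposition $\chi=\sum_{N\le N_0}\widetilde\chi_N$, each $\widetilde\chi_N$ supported in $[2^{N-2},2^N]$, so that every piece is compactly supported in $(0,\infty)$ and all boundary terms are trivially zero; the dyadic scaling then also furnishes the interpolation $|K^\pm_{3,N}|\lesssim\<x\>^{-1}\<y\>^{-1}\min\{2^{3N},\langle|x|\pm|y|\rangle^{-3}\}$ that, with $\theta=5/6$, sums to the admissible bound $\<x\>^{-1}\<y\>^{-1}\langle|x|\pm|y|\rangle^{-5/2}$. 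Your proposal to split the $\lambda$-integral at the unrelated scales $\<x\>^{-1}$, $\<y\>^{-1}$ is not the natural choice and would still need to be reconciled with the boundary-term issue. You should either switch to the paper's phase extraction and dyadic scheme, or justify rigorously that at most $k\le 3$ integrations by parts are used (so that the $\lambda^4$ factor from $\lambda\chi\cdot\Gamma_3$ kills the boundary terms) and carefully carry out the spatial near/far estimate for $(r_1-r_2)^{-k}$.
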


\begin{proof}
We show that $|K_{3}(x,y)|\lesssim\<x\>^{-1}\<y\>^{-1}\big\langle|x|-|y|\big\rangle^{-2-\delta}$
for some $\delta>0$, which together with Lemmas \ref{lemma_2_1} and Proposition \ref{prop_2_2}, implies the desired assertion. The proof is more involved than in the previous case since $\Gamma_3(\lambda)$ depends on $\lambda$.

As before based on the free resolvent formula \ref{free_resolvent}, we can write that
\begin{align*}
K_{3}(x,y)
&=\int_0^\infty \lambda^3 \chi(\lambda)\Big(R_0^+(\lambda^4)v\Gamma_3(\lambda)v[R_0^+-R_0^-](\lambda^4)\Big)(x,y)d\lambda,\\
&=\int_0^\infty \lambda^4 \chi(\lambda)\left(\int_{\R^6}F_+(\lambda|x-u_1|)\widetilde{\Gamma}(\lambda,u_1,u_2)[F_+-F_-](\lambda|y-u_2|)du_1du_2\right)d\lambda,\\
&:=\big(K_3^+(x,y)-K_3^-(x,y)\big).
\end{align*}
where we set $\widetilde \Gamma(\lambda,u_1,u_2)=\frac{1}{64\pi^2\lambda^3}(v\Gamma_3(\lambda)v)(u_1,u_2)$ for $\lambda>0$.

Let $\Phi^\pm(x,y, u_1,u_2)=(|x-u_1|-|x|)\pm (|y-u_2|-|y|)$. Then
\begin{align*}
	K^\pm_{3}(x,y)&=\int_0^\infty  e^{i\lambda(|x|\pm|y|)}\lambda^4 \chi(\lambda)b^\pm(\lambda,x,y)d\lambda,
\end{align*}
where
\begin{align*}
b^\pm(\lambda,x,y)&=\int_{\R^6}e^{i\lambda \Phi^\pm(x,y,u_1,u_2)}B_+(\lambda|x-u_1|)\widetilde \Gamma(\lambda,u_1,u_2)B_\pm(\lambda|y-u_2|)du_1du_2,
\end{align*}
and
 $$B_\pm(s)=e^{\mp is}F_\pm(s)=s^{-1}(1-e^{(-1\mp i) s}).$$
Firstly,  using Leibniz formula, \eqref{lemma_3_3_4},  Lemma \ref{lemma_AB} and  Lemma \ref{lemma3.7}, it follows that
\begin{align}
\label{proposition_regular_2_proof_1}
|\partial_\lambda^{\ell}b^\pm(\lambda,x,y)|&\lesssim \lambda^{-\ell-2}\ \Big(\int_{\R^3}\frac{\<u_1\>^{2\ell} |V|(u_1)}{|x-u_1|^2}du_1\Big)^{\frac{1}{2}}\Big(\int_{\R^3}\frac{\<u_2\>^{2\ell} |V|(u_2)}{|y-u_2|^2}du_2\Big)^{\frac{1}{2}}\nonumber\\
&\lesssim \lambda^{-\ell-2} \ \<x\>^{-1}\<y\>^{-1},
\end{align}
for $0<\lambda\lesssim 1$, $x,y\in \R^3$ and $\ell=0,1,2,3$. Next to deal with $K^\pm_3$, we decompose $\chi$ by using the dyadic partition of unity $\{\varphi_N\}$ defined in Subsection \ref{subsection_notation}, as
$$
\chi(\lambda)=\sum_{N=-\infty}^{N_0}\widetilde \chi_N(\lambda),\quad \widetilde \chi_N(\lambda):=\chi(\lambda)\varphi_N(\lambda),\quad \lambda>0,
$$
where $N_0\lesssim \log\lambda_0\lesssim-1$, $\supp\widetilde\chi_N\subset [2^{N-2},2^N]$ and $|\partial_\lambda^\ell \widetilde \chi_N(\lambda)|\le C_\ell 2^{-N\ell}$ for all $\ell \in \mathbb{N}_0$.
Let $K^{\pm}_{3,N}$ be given by $K^{\pm}_{3}$ with $\chi$ replaced by $\widetilde \chi_N$ and decompose $K^\pm_{3}$ as
$$
K^\pm_{3}=\sum_{N\le N_0}K^{\pm}_{3,N}.
$$
Since $\lambda\sim 2^N$ on $\supp \widetilde \chi_N$, we know by  \eqref{proposition_regular_2_proof_1} that
$$
|K^{\pm}_{3,N}(x,y)|\lesssim 2^{2N}\<x\>^{-1}\<y\>^{-1}\int_{\supp \widetilde \chi_N}d\lambda\lesssim 2^{3N}\<x\>^{-1}\<y\>^{-1},\quad x,y\in \R^3.
$$
In particular, if $\big||x|\pm|y|\big|\le1$,  then
$$
|K^{\pm}_{3,N}(x,y)|\lesssim 2^{3N}\<x\>^{-1}\<y\>^{-1}\big\langle|x|\pm|y|\big\rangle^{-3}.
$$
On the other hand, if  $\big||x|\pm|y|\big|>1$,  then we obtain by integrating by parts that
\begin{align*}
K^{\pm}_{3,N}(x,y)=\frac{i}{(|x|\pm |y|)^3}\int_0^\infty  e^{i\lambda(|x|\pm |y|)}\partial_\lambda^3\Big[\lambda^4\widetilde \chi_N(\lambda)b^\pm(\lambda,x,y)\Big]d\lambda.
\end{align*}
Then \eqref{truncated estimates}, \eqref{proposition_regular_2_proof_1} and the support property of $\widetilde \chi_N$ imply
\begin{align*}
|K^{\pm}_{3,N}(x,y)|\lesssim \<x\>^{-1}\<y\>^{-1}\big\langle|x|\pm|y|\big\rangle^{-3}\ 2^{-N}\int_{2^{N-2}}^{2^N}d\lambda\lesssim \<x\>^{-1}\<y\>^{-1}\big\langle|x|\pm|y|\big\rangle^{-3},
\end{align*}
as $\big||x|\pm|y|\big|>1$. Therefore, $K^{\pm}_{3,N}(x,y)$ satisfies
\begin{align*}
|K^{\pm}_{3,N}(x,y)|&\lesssim \<x\>^{-1}\<y\>^{-1}\min\big\{2^{3N},\big\langle|x|\pm|y|\big\rangle^{-3}\big\}\\
&\lesssim 2^{3N(1-\theta)} \<x\>^{-1}\<y\>^{-1}\big\langle|x|\pm|y|\big\rangle^{-3\theta},\quad \theta\in [0,1].
\end{align*}
 In particular,  for instance, taking $\theta=5/6$, then we obtain
\begin{align*}
|K^{\pm}_{3}(x,y)|\lesssim \<x\>^{-1}\<y\>^{-1}\big\langle|x|\pm|y|\big\rangle^{-5/2}\sum_{N\le N_0}2^{N/2}\lesssim \<x\>^{-1}\<y\>^{-1}\big\langle|x|\pm|y|\big\rangle^{-5/2}.
\end{align*}
Therefore the desired result follows by Lemma \ref{lemma_2_1} and Proposition \ref{prop_2_2}.
\end{proof}

Finally, we deal with the class (II), namely  the operator $T_{K_{P}}$. First recall that
$$\widetilde{P}=\frac{8\pi}{(1+i)\|V\|_{L^1}}P,\ \ \ \ \ P=\frac{1}{\|V\|_{L^1}}\<\cdot,v\>v.$$

\begin{proposition}
\label{proposition_regular_4}
Let $1<p<\infty$. Then $T_{K_{P}}\in \mathbb B\big(L^1(\R^3),L^{1,\infty}(\R^3)\big)\cap \mathbb B\big(L^p(\R^3)\big).$
\end{proposition}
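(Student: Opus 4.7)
The plan is to first establish the explicit two-term representation of $K_P$ announced in the introduction as \eqref{First-fomula}, namely
\[
K_P(x,y)=-\frac{1+i}{4\pi}G(x)\,\frac{|x|\chi_{\{||x|-|y||\ge1\}}}{|x|^4-|y|^4}\,G(y)+R(x,y),
\]
with $|R(x,y)|\lesssim \<x\>^{-1}\<y\>^{-1}\<|x|-|y|\>^{-2}$, and then to combine Propositions \ref{prop_2_2} and \ref{prop_2_3}. Once this decomposition is in hand, $R$ satisfies \eqref{kernel type-1}, so Proposition \ref{prop_2_2} yields $T_R\in\mathbb B(L^1,L^{1,\infty})\cap\mathbb B(L^p)$ for $1<p<\infty$. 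For the principal term, I would first check that $G$ is uniformly bounded: Lemma \ref{lemma3.7} with $\alpha=1$, $n=3$, $\beta=\mu>3$ gives $\int|V|(u)|x-u|^{-1}du\lesssim\<x\>^{-1}$, so $|G(x)|\lesssim|x|/\<x\>\le1$. Hence multiplication by $G$ is bounded on every $L^p$ and on $L^{1,\infty}$; denoting this multiplication by $M_G$, the principal part of $T_{K_P}$ equals $-\tfrac{1+i}{4\pi}M_GT_KM_G$ with $T_K$ the operator in Proposition \ref{prop_2_3}, and the desired endpoint bounds follow.

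The real work is deriving the displayed decomposition. Inserting $\widetilde P=\frac{8\pi}{(1+i)\|V\|_{L^1}^{2}}\<\cdot,v\>v$ together with the free-resolvent formula \eqref{free_resolvent} into \eqref{Kernel integral} yields
\[
K_P(x,y)=\frac{1}{8\pi(1+i)\|V\|_{L^1}^{2}}\iint_{\R^6}|V|(u_1)|V|(u_2)\,J(x-u_1,y-u_2)\,du_1du_2,
\]
where
\[
J(z,w)=\int_0^\infty\lambda^2\chi(\lambda)F_+(\lambda|z|)[F_+-F_-](\lambda|w|)\,d\lambda.
\]
Using $F_\pm(s)=s^{-1}(e^{\pm is}-e^{-s})$ and $[F_+-F_-](s)=2is^{-1}\sin(s)$, the inner integral $J$ reduces to one-dimensional oscillatory integrals of the form $\int\chi(\lambda)e^{\pm i\lambda|z|}\sin(\lambda|w|)\,d\lambda$ and $\int\chi(\lambda)e^{-\lambda|z|}\sin(\lambda|w|)\,d\lambda$. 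I would split $\chi\equiv1-(1-\chi)$; the unrestricted pieces evaluate distributionally (via contour shifts and the Sokhotski--Plemelj formula) to the principal singular kernel $-4i|z|\chi_{\{||z|-|w||\ge1\}}/(|z|^4-|w|^4)$, where the unit truncation reflects the scale of $\chi$, while the $(1-\chi)$-pieces are smooth and, by repeated integration by parts in $\lambda$, are bounded by $\<|z|-|w|\>^{-2}/(|z||w|)$.

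Next, to pass from the convolution against $|V|(u_1)|V|(u_2)$ to the factored form \eqref{First-fomula}, I would employ the identity
\[
\frac{|z|}{|z|^4-|w|^4}=\frac{1}{(|z|+|w|)(|z|^2+|w|^2)(|z|-|w|)},
\]
and Taylor-expand in $u_1,u_2$: replace $|x-u_i|$ and $|y-u_i|$ by $|x|$ and $|y|$ in the outer factor, pulling out $|x|/(|x|^4-|y|^4)$, while the residual factor $1/(|x-u_1||y-u_2|)$ integrates against $|V|(u_1)|V|(u_2)/\|V\|_{L^1}^2$ (via Lemma \ref{lemma3.7}) to yield $G(x)G(y)/(|x||y|)$. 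The prefactor $\tfrac{-4i}{8\pi(1+i)}=-\tfrac{1+i}{4\pi}$ together with the cancellation of $|x||y|$ produces the advertised principal term, while all correction and Taylor-remainder terms accumulate into $R(x,y)$.

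The principal obstacle will be precisely the error analysis in this last step: the kernel $|x|/(|x|^4-|y|^4)$ is singular on the diagonal $|x|=|y|$, so one must verify that the Taylor remainders from replacing $|x-u_i|$ by $|x|$ do not worsen that singularity and in fact deliver the sharp bound $\<x\>^{-1}\<y\>^{-1}\<|x|-|y|\>^{-2}$ for $R$. This is exactly what forces the decay rate $\mu>11$: enough moments of $V$ must be finite so that every correction acquires an extra power of $\<|x|-|y|\>^{-1}$ after being integrated against $|V|(u_1)|V|(u_2)$.
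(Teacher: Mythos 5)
Your high-level plan matches the paper's: derive the decomposition \eqref{First-fomula}, note $|G(x)|\lesssim 1$ by Lemma \ref{lemma3.7}, and then conclude by combining Propositions \ref{prop_2_2} and \ref{prop_2_3} through the factorization $-\tfrac{1+i}{4\pi}M_G T_K M_G + T_R$. However, your route to \eqref{First-fomula} is genuinely different from the paper's, and it is here that there is a real gap. The paper never computes the unconvolved kernel $\widetilde K_P(z,w)$ for this proposition. Instead it writes $K_P^j(x,y)=\int_0^\infty e^{i\lambda(|x|\pm|y|)}\chi(\lambda)\psi_1(\lambda,x,y)\,d\lambda$ (and its three variants with phases $|x|\pm i|y|$), where $\psi_1(\lambda,x,y)=\iint v^2(u_1)v^2(u_2)|x-u_1|^{-1}|y-u_2|^{-1}e^{i\lambda[\cdots]}\,du$, and integrates by parts once in $\lambda$. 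The boundary term at $\lambda=0$ is exactly $\psi_1(0,x,y)=\|V\|_{L^1}^2 G(x)G(y)/(|x||y|)$, a product of an $x$-integral and a $y$-integral, so the $G(x)G(y)$ factorization drops out automatically; the remaining integral is bounded directly by $\<x\>^{-1}\<y\>^{-1}(|x|\pm|y|)^{-2}$ using \eqref{psi_1} and Lemma \ref{lemma3.7}. No Taylor expansion of $|x-u_i|$ is ever performed.

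Your proposal instead first identifies $J=\widetilde K_P$ and extracts the singular kernel $-4i|z|\chi_{\{||z|-|w||\ge1\}}/(|z|^4-|w|^4)$ plus an admissible part $\Psi$ (this is essentially the paper's Lemma \ref{K_p5}, which the paper proves but uses only for the counterexample), and then proposes to un-convolve by Taylor-expanding $|x-u_1|$ about $|x|$ and $|y-u_2|$ about $|y|$ to pull out $G(x)\tfrac{|x|}{|x|^4-|y|^4}G(y)$. This step is where the gap lies: you explicitly flag the remainder analysis as ``the principal obstacle'' and do not carry it out. It is not a triviality one can defer, because it is genuinely needed for the weak-$(1,1)$ endpoint: Minkowski's integral inequality does not hold in the quasi-Banach space $L^{1,\infty}$, so you cannot simply transfer $T_{\widetilde K_P}\in\mathbb B(L^1,L^{1,\infty})$ to the convolution $\mathbb G$ of $\widetilde K_P$ against $|V|\otimes|V|$ — the factored form with bounded multipliers $M_G$ is essential. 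The remainder
\[
R(x,y)=c\iint\frac{|V|(u_1)|V|(u_2)}{|x-u_1||y-u_2|}\left[\frac{|x-u_1|^2|y-u_2|\,\chi_{\{||x-u_1|-|y-u_2||\ge1\}}}{|x-u_1|^4-|y-u_2|^4}-\frac{|x|^2|y|\,\chi_{\{||x|-|y||\ge1\}}}{|x|^4-|y|^4}\right]du
\]
must be shown to satisfy \eqref{kernel type-1}, and this requires a delicate case analysis near the characteristic variety $|x-u_1|=|y-u_2|$, in the mismatch region where the two truncations disagree, and in the regime $|y|\lesssim1\ll|x|$ where $G(y)\sim|y|$ degenerates. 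The estimate likely does hold, but until it is proved the argument is incomplete; the paper's single integration-by-parts argument avoids the issue entirely. Finally, a small misattribution: $\mu>11$ is imposed to make the resolvent expansion of Lemma \ref{lemma_3_3} hold (three $\lambda$-derivatives of $\Gamma_3$), not primarily by the remainder analysis for $K_P$, which would work with considerably less decay.
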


\begin{remark}
It will be proved in Section \ref{section_counterexample} that $T_{K_{P}}\notin\mathbb B\big(L^\infty(\R^3)\big)\cup B\big(L^1(\R^3)\big)$.  \end{remark}

\begin{proof}[Proof of Proposition \ref{proposition_regular_4}]

 By using  \eqref{free_resolvent} we first calculate that
\begin{align*}
K_{P}(x,y)
&=\frac{8\pi}{(1+i)\|V\|_{L^1}}\int_0^\infty \lambda^4\chi(\lambda)\Big(R_0^+(\lambda^4)v P v[R_0^+-R_0^-](\lambda^4)\Big)(x,y)d\lambda\\
&=\frac{1}{8\pi(1+i)\|V\|_{L^1}}\int_0^\infty\lambda^2 \chi(\lambda)\\
&\ \ \ \ \ \ \ \ \times\Big(\int_{\R^6}F_+(\lambda|x-u_1|)(vPv)(u_1, u_2)(F_+-F_-)(\lambda|y-u_2|)du_1du_2\Big)d\lambda.
\end{align*}
where $F_\pm(s)=s^{-1}(e^{\pm is}-e^{-s})$ and  $(F_+-F_-)(s)=s^{-1}(e^{is}-e^{-is})$.

Note that
$$(vPv)(u_1,u_2)=\frac{v^2(u_1)v^2(u_2)}{\|V\|_{L^1}}, \ \ (u_1, u_2)\in \R^6.$$
Hence we can rewrite $K_P(x,y)$ as:
\begin{align}
\label{K_p}
K_{P}(x,y)=\frac{1}{8\pi(1+i)\|V\|^2_{L^1}}\int_0^\infty \chi(\lambda)&\Big[\int_{\R^6}\frac{v^2(u_1)v^2(u_2)}{|x-u_1||y-u_2|}\Big( e^{i\lambda|x-u_1|}-e^{-\lambda|x-u_1|}\Big) \nonumber\\
&\times\Big( e^{i\lambda|y-u_2|}-e^{-i\lambda|y-u_2|}\Big)du_1du_2\Big]d\lambda.
\end{align}
Let $z=x-u_1$ and $w=y-u_2$. Then
$$( e^{i\lambda |z|}-e^{-\lambda |z|})( e^{i\lambda |w|}-e^{-i\lambda |w|})=e^{i\lambda(|z|+|w|)}-e^{i\lambda(|z|-|w|)}-e^{-\lambda(|z|-i|w|)}+e^{-\lambda(|z|+i|w|)}.$$
So we can decompose $K_P(x,y)$ as follows:
\begin{align}\label{K_p'}
K_P(x,y)=\frac{1}{8\pi(1+i)\|V\|^2_{L^1}}\Big(K_{P}^{1}(x,y)-K_{P}^{2}(x,y)-K_{P}^{3}(x,y)+K_{P}^{4}(x,y)\Big),\end{align}
where
\begin{align*}
K_{P}^1(x,y)=\int_0^\infty \chi(\lambda)\Big(\int_{\R^6}\frac{v^2(u_1)v^2(u_2)}{|x-u_1||y-u_2|}
e^{i\lambda(|x-u_1|+|y-u_2|)}du_1du_2\Big)d\lambda;
\end{align*}
\begin{align*}
K_{P}^2(x,y)=\int_0^\infty \chi(\lambda)\Big(\int_{\R^6}\frac{v^2(u_1)v^2(u_2)}{|x-u_1||y-u_2|}
e^{i\lambda(|x-u_1|-|y-u_2|)}du_1du_2\Big)d\lambda;
\end{align*}
\begin{align*}
K_{P}^3(x,y)=\int_0^\infty \chi(\lambda)\Big(\int_{\R^6}\frac{v^2(u_1)v^2(u_2)}{|x-u_1||y-u_2|}
e^{-\lambda(|x-u_1|-i|y-u_2|)}du_1du_2\Big)d\lambda;
\end{align*}
\begin{align*}
K_{P}^4(x,y)=\int_0^\infty \chi(\lambda)\Big(\int_{\R^6}\frac{v^2(u_1)v^2(u_2)}{|x-u_1||y-u_2|}
e^{-\lambda(|x-u_1|+i|y-u_2|)}du_1du_2\Big)d\lambda.
\end{align*}

In the following, we will estimate these kernels $K_{P}^j(x,y)$ ($j=1,2,3,4$) case by case. We only deal with the $K_P^1(x,y)$ due to similarity. For this end, let
\begin{align}
\label{psi_1}
\psi_1(\lambda,x, y):=\int_{\R^6}\frac{v^2(u_1)v^2(u_2)}{|x-u_1||y-u_2|}
e^{i\lambda\big[(|x-u_1|-|x|)+(|y-u_2|-|y|)\big]}du_1du_2.
\end{align}
Then we obtain
$$
K_P^1(x,y)=\int_0^\infty e^{i\lambda(|x|+|y|)}\chi(\lambda)\psi_1(\lambda,x,y)d\lambda.$$
 By integration by parts it follows that
 \begin{align}
 \label{K_p1}
 K_P^1(x,y)&=\frac{1}{i(|x|+|y|)}\Big[-\psi_1(0,x,y)-\int_0^\infty  e^{i\lambda(|x|+|y|)}\partial_\lambda\big(\chi\psi_1\big)\Big]d\lambda\nonumber\\
 &=-\frac{\psi_1(0,x,y)}{i(|x|+|y|)}-\frac{\partial_\lambda\psi_1(0,x,y)}{(|x|+|y|)^2}-\frac{1}{(|x|+|y|)^2}\int_0^\infty e^{i\lambda(|x|+|y|)}
 \partial^2_\lambda\big(\chi\psi_1\big)d\lambda.
 \end{align}
By using   \eqref{psi_1}, Lemma \ref{lemma3.7} and the decay condition of potential $V$, we have
\begin{align*}
|\psi_1(\lambda,x,y)|&+|\partial_\lambda\psi_1(\lambda,x,y)|+\Big|\int_0^\infty e^{i\lambda(|x|+|y|)}
 \partial^2_\lambda\big(\chi\psi_1\big)d\lambda\Big|\\
 &\lesssim \Big(\int_{\R^3}\frac{\<u_1\>^2v^2(u_1)}{|x-u_1|}du_1\Big)\Big(\int_{\R^3}\frac{\<u_2\>^2v^2(u_2)}{|y-u_2|}du_2\Big)\lesssim\frac{1}{\<x\>\<y\>}.
\end{align*}
Therefore  \eqref{K_p1} implies that
$$K_P^1(x,y)=\frac{i}{(|x|+|y|)}\Big(\int_{\R^6}\frac{v^2(u_1)v^2(u_2)}{|x-u_1||y-u_2|}du_1du_2\Big)+O\Big(\frac{1}{\<x\>\<y\>(|x|+|y|)^2}\Big),$$
where we use $h(x,y)=O\big(g(x,y)\big)$ to denote $|h(x,y)|\lesssim|g(x,y)|.$
Similarly, we  obtain that
\begin{align*}
K_P^2(x,y)=\frac{i}{(|x|-|y|)}\Big(\int_{\R^6}\frac{v^2(u_1)v^2(u_2)}{|x-u_1||y-u_2|}du_1du_2\Big)+O\Big(\frac{1}{\<x\>\<y\>(|x|-|y|)^2}\Big);
\end{align*}
\begin{align*}
K_P^3(x,y)=\frac{1}{|x|-i|y|}\Big(\int_{\R^6}\frac{v^2(u_1)v^2(u_2)}{|x-u_1||y-u_2|}du_1du_2\Big)+O\Big(\frac{1}{\<x\>\<y\>(|x|+|y|)^2}\Big);
\end{align*}
\begin{align*}
K_P^4(x,y)=\frac{1}{|x|+i|y|}\Big(\int_{\R^6}\frac{v^2(u_1)v^2(u_2)}{|x-u_1||y-u_2|}du_1du_2\Big)+O\Big(\frac{1}{\<x\>\<y\>(|x|+|y|)^2}\Big).
\end{align*}
Therefore by \eqref{K_p'} it follows that
$$K_P(x,y)=-\frac{1+i}{4\pi\|V\|^2_{L^1}}\Big(\int_{\R^6}\frac{v^2(u_1)v^2(u_2)}{|x-u_1||y-u_2|}du_1du_2\Big)\frac{|x|^2|y|}{|x|^4-|y|^4}
+O\Big(\frac{1}{\<x\>\<y\>(|x|-|y|)^2}\Big).$$
 By  \eqref{K_p} and Lemma \ref{lemma3.7},  we also have
$$|K_P(x,y)|\lesssim \int_{\R^6}\frac{v^2(u_1)v^2(u_2)}{|x-u_1||y-u_2|}du_1du_2\lesssim \frac{1}{\<x\>\<y\>},  \ \text{for all}\ (x,y)\in \R^3\times\R^3.
$$
Hence we can finally write $K_P(x,y)$ into the following form:
\begin{align}
\label{K_p'''}
K_P(x,y)=-\frac{(1+i)}{4\pi}G(x)\Big(\frac{|x|\chi_{\{||x|-|y||\ge1\}}}{|x|^4-|y|^4}\Big)G(y)
+O\Big(\frac{1}{\<x\>\<y\>\big\langle|x|-|y|\big\rangle^2}\Big),
\end{align}
where
$$G(x)=\frac{|x|}{\|V\|_{L^1}}\Big(\int_{\R^3}\frac{|V|(u)}{|x-u|}du\Big).$$
Note that
$|G(x)|\lesssim |x|\<x\>^{-1}<\infty$  by  Lemma \ref{lemma3.7}. Then  Propositions \ref{prop_2_2} and \ref{prop_2_3} imply that
$T_{K_P},T_{K_P}^*\in \mathbb{B}(L^1,L^{1,\infty})\cap\mathbb{B}(L^p)$ for all $1<p<\infty$.
\end{proof}

In one word, putting Propositions \ref{proposition_regular_1}--\ref{proposition_regular_4} all together, we have finished the proof of Theorem \ref{theorem_low_1}.

\begin{remark}
\label{remark_regular_4}
Although the expression of $K_P(x,y)$ in  \eqref{K_p'''} is suitable to show the weak $L^1$-boundedness (i.e. $T_{K_P}\in \mathbb{B}(L^1,L^{1,\infty})$), however it is ineffective to disprove the $L^1$-$L^1$ and $L^\infty$-$L^\infty$ boundedness of $ T_{K_P}$. This is because the second part of  \eqref{K_p'''} just represents a kernel form satisfying  weak $L^1$-estimate but lacks specificity.  In Section \ref{section_counterexample} we will employ alternative formula for $K_P(x,y)$  to show $T_{K_{P}}\notin\mathbb B\big(L^\infty(\R^3)\big)\cup \mathbb B\big(L^1(\R^3)\big)$ assuming  that $V$ has compact support.
%
\end{remark}

\section{The proof of Theorem \ref{theorem_2}}
\label{section_counterexample}
  This section is   devoted  to showing  Theorem \ref{theorem_2}. Throughout the section, we assume that $V\not\equiv 0$, $\supp V \subset B(0, R_0)$ for some $R_0 > 0$, zero is  a regular point of $H$ and $H$ has no embedded eigenvalues in $(0, \infty)$, where $B(0, R)=\{x\in \R^3\ |x|\le R\}$. 

Recall that $W_-=I-\frac{2}{\pi i}(W_-^L+W_-^H )$. Except for $T_{K_P}$, all the other terms in $W_-^L$ in the right side of \eqref{wave_operator_regular} and the high-energy part $W_-^H$ are bounded on $L^p(\R^3)$ for all $1\le p\le \infty$ by Propositions \ref{proposition_regular_1}-\ref{proposition_regular_2} and \cite[Proposition 4.1]{GoGr21}. Theorem \ref{theorem_2} thus follows from the following proposition. 

\begin{proposition}
\label{proposition_example_1}
Let $f_R=\chi_{B(0,R)}$. Then $\norm{T_{K_P}f_R}_{L^\infty(\R^3)}\to \infty$ as $R\to \infty$, and $T_{K_P}f_1\notin L^1(\R^3)$. As a consequence, $T_{K_P}$ is neither bounded on $L^\infty(\R^3)$ nor on $L^1(\R^3)$.
\end{proposition}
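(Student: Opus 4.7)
The plan is to exploit the alternative integral representation \eqref{sec-fomula} of $K_P$ announced in Remark \ref{remark_regular_4}. Writing $\widetilde K_P(z,w) = -4i|z|\chi_{\{||z|-|w||\ge 1\}}/(|z|^4-|w|^4) + \Psi(z,w)$, we split $T_{K_P} = T_1 + T_2$, where $T_1$ corresponds to the singular first summand and $T_2$ to $\Psi$. Since $T_\Psi\in\mathbb{B}(L^p(\R^3))$ for every $1\le p\le\infty$, Minkowski's inequality combined with the translation invariance of $L^p$-norms and $\int v^2 = \norm{V}_{L^1}<\infty$ yields $T_2\in\mathbb{B}(L^p(\R^3))$ for all $1\le p\le\infty$. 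Consequently $\norm{T_2 f_R}_{L^\infty}$ is uniformly bounded in $R$ and $T_2 f_1\in L^1(\R^3)$, so the proposition reduces to the corresponding unboundedness and non-integrability statements for $T_1$.

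For the $L^\infty$ blow-up, we test at $x_R$ with $|x_R|=R+C$, where $C>2R_0+2$ is a fixed constant. This ensures $|x_R-u_1|-|y|\ge 1$ for every $u_1,u_2\in\supp v\subset B(0,R_0)$ and every $y\in B(-u_2,R)$, so the truncation $\chi_{\{||z|-|w||\ge 1\}}$ is automatically satisfied after the shift $w=y-u_2$. The inner integral becomes
\[
-4i\int_{B(-u_2,R)}\frac{|x_R-u_1|}{|x_R-u_1|^4-|y|^4}\,dy,
\]
and polar integration on $B(0,R)$ using the partial fraction $\frac{|\tilde x|r^2}{|\tilde x|^4-r^4}=\frac14\bigl[\frac{1}{|\tilde x|-r}+\frac{1}{|\tilde x|+r}\bigr]-\frac{|\tilde x|}{2(|\tilde x|^2+r^2)}$ (with $\tilde x:=x_R-u_1$) evaluates the corresponding integral over $B(0,R)$ to $\pi\ln\frac{|\tilde x|+R}{|\tilde x|-R}-2\pi\arctan(R/|\tilde x|)=\pi\ln R+O(1)$, uniformly in $|u_1|\le R_0$ since $|\tilde x|-R\in[C-R_0,C+R_0]$ is bounded. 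A direct estimate on the symmetric-difference annulus $B(-u_2,R)\triangle B(0,R)$ (of volume $\lesssim R_0R^2$ with integrand $\lesssim R^{-2}$) bounds the error from replacing $B(0,R)$ by $B(-u_2,R)$ by $O(1)$. Averaging against $v^2(u_1)v^2(u_2)$ and applying the prefactor from \eqref{sec-fomula} then gives $T_1 f_R(x_R)=-\frac{1+i}{4}\ln R+O(1)$, whence $\norm{T_{K_P}f_R}_{L^\infty}\gtrsim \ln R\to\infty$.

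For $T_{K_P}f_1\notin L^1(\R^3)$, we analyze $T_1 f_1(x)$ for $|x|\gg R_0+2$, again with automatic truncation. Expanding $(|x-u_1|^4-|y|^4)^{-1}=|x-u_1|^{-4}(1+O(|x|^{-4}))$ for $|y|\le 1$ yields
\[
\int_{B(-u_2,1)}\frac{|x-u_1|}{|x-u_1|^4-|y|^4}\,dy=\frac{4\pi/3}{|x-u_1|^3}+O(|x|^{-7})
\]
uniformly in $u_1,u_2\in B(0,R_0)$. Inserting into \eqref{sec-fomula}, using $\int v^2(u_2)\,du_2=\norm{V}_{L^1}$ and Lemma \ref{lemma3.7} to estimate $\int v^2(u_1)|x-u_1|^{-3}\,du_1=\norm{V}_{L^1}|x|^{-3}+o(|x|^{-3})$, we conclude $T_1 f_1(x)=-\frac{1+i}{3|x|^3}+o(|x|^{-3})$ as $|x|\to\infty$. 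Since $|T_1 f_1(x)|\gtrsim |x|^{-3}$ on $\{|x|>R_\ast\}$ is not integrable, $T_1 f_1\notin L^1(\R^3)$, and combining with $T_2 f_1\in L^1$ forces $T_{K_P}f_1\notin L^1(\R^3)$. Theorem \ref{theorem_2} then follows from the decomposition $W_-=I-\frac{2}{\pi i}(W_-^L+W_-^H)$ together with the $L^1$- and $L^\infty$-boundedness of every other summand, provided by Propositions \ref{proposition_regular_1}, \ref{proposition_regular_2}, and \cite[Proposition 4.1]{GoGr21}.

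The principal technical obstacle is to ensure that the double convolution against the non-negative weights $v^2(u_1)v^2(u_2)$ does not cancel the explicit leading behavior. The choice $|x_R|>R+R_0+1$ (resp.\ $|x|>1+R_0+1$) simultaneously trivializes the cutoff $\chi_{\{||z|-|w||\ge 1\}}$ after the shifts and renders the $(u_1,u_2)$-dependence of the inner integral purely perturbative, so the uniform leading term survives the $v^2\otimes v^2$ average. The explicit complex prefactor $1/(8\pi(1+i)\norm{V}_{L^1}^2)$ from \eqref{sec-fomula} then produces the nonzero coefficients $-\frac{1+i}{4}$ and $-\frac{1+i}{3}$ displayed above, precluding cancellation with the $O(1)$ (resp.\ $o(|x|^{-3})$) error terms.
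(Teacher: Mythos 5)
Your proposal is correct and follows essentially the same route as the paper: both start from the decomposition of $K_P$ into the singular part $\mathbb{G}$ and the admissible remainder $\mathbb{F}$ provided by Lemma \ref{K_p5}, test on the same family $f_R=\chi_{B(0,R)}$ and $f_1$, use the largeness of $|x|$ to trivialize the cutoff $\chi_{\{||x-u_1|-|y-u_2||\ge1\}}$ so the convolved integrand is sign-definite, and extract the logarithmic ($L^\infty$) and $|x|^{-3}$ ($L^1$) blow-up from the polar integral of $|z|/(|z|^4-|w|^4)$. The only difference is cosmetic: you compute precise leading asymptotics, $T_1 f_R(x_R)=-\tfrac{1+i}{4}\ln R+O(1)$ and $T_1 f_1(x)\sim -\tfrac{1+i}{3}|x|^{-3}$, controlling the $(u_1,u_2)$-dependence by replacing $B(-u_2,R)$ with $B(0,R)$ at the cost of an $O(1)$ error, whereas the paper establishes the same conclusions by direct lower bounds (choosing $B(0,R-R_0)\subset B(-u_2,R)$ and estimating $\Phi(u_1,u_2,x)\gtrsim \ln R$); both strategies are valid.
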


To prove Proposition \ref{proposition_example_1}, we begin with the following lemma which gives another expression of $K_P(x,y)$.

\begin{lemma}
\label{K_p5}
Let $K_P(x,y)$ be the kernel of the operator $T_{K_p}$ defined in  \eqref{wave_operator_regular}.  Then
\begin{align*}K_P(x,y)=\mathbb{G}(x,y)+\mathbb{F}(x,y),
\end{align*}
where
\begin{align*}
&\mathbb{G}(x,y)=\frac{-1-i}{4\pi\|V\|^2_{L^1}}\int_{\R^6}|V(u_1)V(u_2)| \frac{|x-u_1|\chi_{\{||x-u_1|-|y-u_2||\ge1\}}}{|x-u_1|^4-|y-u_2|^4}du_1du_2,\\
&\mathbb{F}(x,y)=\frac{1}{8\pi(1+i)\|V\|^2_{L^1}}\int_{\R^6}|V(u_1)V(u_2)| \Psi(x-u_1, y-u_2)du_1du_2,
\end{align*}
and   $\Psi(z,w)$  is an admissible  kernel on $\R^3\times\R^3$ such that $T_\Psi$ is bounded on $L^p(\R^3)$ for all $1\le p\le\infty.$   As a consequence,  $T_{\mathbb{F}}  \in \mathbb{B}\big(L^p(\R^3)\big)$ for each $1\le p\le\infty$.
\end{lemma}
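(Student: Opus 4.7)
The plan is to resume from the four-term decomposition already obtained in the proof of Proposition \ref{proposition_regular_4}:
$$K_P(x,y)=\frac{1}{8\pi(1+i)\|V\|^2_{L^1}}\bigl(K_P^1-K_P^2-K_P^3+K_P^4\bigr)(x,y).$$
By Fubini's theorem (justified by the compact support of $\chi$ and $v^2=|V|\in L^1(\R^3)$), I would move the $\lambda$-integration inside the $(u_1,u_2)$-integration in each $K_P^j$ and rewrite
$$K_P(x,y)=\frac{1}{8\pi(1+i)\|V\|^2_{L^1}}\int_{\R^6}|V(u_1)V(u_2)|\,\widetilde K_P(x-u_1,y-u_2)\,du_1 du_2,$$
with $\widetilde K_P(z,w)=\tfrac{1}{|z||w|}(M_1-M_2-M_3+M_4)(z,w)$ where each $M_j(z,w)=\int_0^\infty\chi(\lambda)e^{\pm\lambda A_j}d\lambda$ and $A_j$ runs over $\{i(|z|+|w|),\,i(|z|-|w|),\,-(|z|-i|w|),\,-(|z|+i|w|)\}$.

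Next, for each $M_j$ I would integrate by parts repeatedly in $\lambda$. Since $\chi\equiv 1$ on a neighborhood of the origin, $\chi^{(k)}(0)=0$ for all $k\ge 1$, and each IBP only uses the boundary value $\chi(0)=1$, producing a leading term $c_j/A_j$ (with $c_j\in\{i,1\}$) plus a remainder of size $C_N|A_j|^{-N}$ for every $N$. The algebraic simplification carried out in the proof of Proposition \ref{proposition_regular_4} collapses the combined leading contribution $\tfrac{1}{|z||w|}\sum_j\epsilon_j c_j/A_j$ (with signs $\epsilon=(+,-,-,+)$) into $-4i|z|/(|z|^4-|w|^4)$; multiplication by $\tfrac{1}{8\pi(1+i)}$ reproduces the constant $-\tfrac{1+i}{4\pi\|V\|^2_{L^1}}$ of $\mathbb G$ once the cutoff $\chi_{\{||z|-|w||\ge 1\}}$ is introduced to absorb the spherical singularity at $|z|=|w|$. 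By construction the residual
$$\Psi(z,w):=\widetilde K_P(z,w)+\frac{4i|z|\chi_{\{||z|-|w||\ge 1\}}}{|z|^4-|w|^4}$$
then yields the desired identity $K_P=\mathbb G+\mathbb F$.

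The main step, and the principal obstacle, is to verify admissibility of $\Psi$, i.e.\ $\sup_z\int|\Psi(z,w)|dw+\sup_w\int|\Psi(z,w)|dz<\infty$. Away from the coordinate subspaces, the IBP tails yield
$$|\Psi(z,w)|\lesssim_N\frac{1}{|z||w|}\Bigl((|z|+|w|)^{-N}+||z|-|w||^{-N}\chi_{\{||z|-|w||\ge 1\}}+(|z|^2+|w|^2)^{-N/2}\Bigr),$$
which is integrable in each variable uniformly in the other as long as $|z|$ and $|w|$ stay bounded below. The delicate region is near $\{z=0\}\cup\{w=0\}$, where the $\tfrac{1}{|z||w|}$ prefactor naively diverges; here I would exploit the key cancellation
$$M_1-M_2-M_3+M_4=2i\int_0^\infty\chi(\lambda)\bigl(e^{i\lambda|z|}-e^{-\lambda|z|}\bigr)\sin(\lambda|w|)\,d\lambda=O(|z||w|),$$
obtained from the Taylor expansions $e^{i\lambda|z|}-e^{-\lambda|z|}=(1+i)\lambda|z|+O(|z|^2)$ and $\sin(\lambda|w|)=\lambda|w|+O(|w|^3)$; this kills the $1/|z||w|$ prefactor near the axes, and the subtracted main term $4i|z|\chi/(|z|^4-|w|^4)$ vanishes linearly at $z=0$ and is bounded on compacta. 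Splitting $(z,w)$-space into $\{||z|-|w||<1\}$ (where $\Psi=\widetilde K_P$ and the cancellation suffices) and $\{||z|-|w||\ge 1\}$ (where the bulk IBP estimates take over), combined with the cancellation, produces admissibility of $\Psi$. Given this, Minkowski's inequality and $|V|\in L^1(\R^3)$ transfer admissibility to $\mathbb F$, and Lemma \ref{lemma_2_1} delivers $T_{\mathbb F}\in\mathbb B(L^p(\R^3))$ for every $1\le p\le\infty$. The technical difficulty is precisely this coupling of near-axis cancellation with IBP decay to produce integrability uniformly in the transverse variable, rather than merely pointwise bounds.
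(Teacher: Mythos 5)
Your setup — passing $K_P$ through Fubini to $\widetilde K_P(x-u_1,y-u_2)$, defining $\Psi=\widetilde K_P + 4i|z|\chi_{\{||z|-|w||\ge1\}}/(|z|^4-|w|^4)$, doing one integration by parts with $\chi(0)=1$ to extract the main term, and checking admissibility of $\Psi$ — is exactly the paper's plan. The gap is in the admissibility verification for $\Psi$ on the region $\{||z|-|w||\ge1\}$ near the axes. Your raw IBP tail bound carries a $\frac{1}{|z||w|}$ prefactor, so for $|z|\le 1/2$ (hence $|w|\ge1$) it is of order $\frac{1}{|z|}\langle w\rangle^{-N-1}$, which blows up as $z\to 0$; to restore uniformity you appeal to the cancellation
$$M_1-M_2-M_3+M_4 = 2i\int_0^\infty \chi(\lambda)\bigl(e^{i\lambda|z|}-e^{-\lambda|z|}\bigr)\sin(\lambda|w|)\,d\lambda = O(|z||w|).$$
That $O(|z||w|)$ bound is \emph{only} valid when both $|z|$ and $|w|$ are small, because the approximation $\sin(\lambda|w|)\approx\lambda|w|$ fails for $|w|\gtrsim1$. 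In the regime that matters ($|z|\to 0$, $|w|\ge1$) the honest Taylor bound is $O(|z|)$, which after dividing by $|z||w|$ gives only $|\widetilde K_P(z,w)|\lesssim |w|^{-1}$ — not integrable over $\{|w|\ge1\}$. So as written, neither the bulk IBP estimate nor the Taylor cancellation, nor their naive combination, closes the argument.

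What is actually needed — and what the paper supplies in the derivation of \eqref{K_p9} — is a second integration by parts in $\lambda$ \emph{after} the cancellation has been exhibited in factored form: since $\lambda\mapsto\frac{e^{i\lambda|z|}-e^{-\lambda|z|}}{|z|}$ is smooth with $\lambda$-derivatives bounded uniformly in $|z|\le1/2$ and vanishes at $\lambda=0$, integrating by parts $N$ times against the oscillatory factor $\sin(\lambda|w|)$ yields $|\widetilde K_P(z,w)|\lesssim_N |w|^{-N}$ for $|z|\le 1/2$, which is the missing transverse decay. The paper does not phrase it this way: after a first IBP it isolates $\Psi_2$ in \eqref{Psi_2} and then applies the identity
$$\frac{e^{\lambda(a+b)}}{a+b}-\frac{e^{\lambda(a-b)}}{a-b}=be^{\lambda a}\Bigl(\frac{\lambda}{a+b}\frac{e^{\lambda b}-1}{\lambda b}-\frac{\lambda}{a-b}\frac{e^{-\lambda b}-1}{\lambda b}-\frac{2}{a^2-b^2}\Bigr),$$
with $(a,b)=(i|z|,-i|w|)$ or $(-|z|,i|w|)$, which extracts the factor $b$ and leaves a $\lambda$-smooth amplitude that can be integrated by parts to produce $\langle z\rangle^{-N}$ for $|w|\le 1/2$ and $\langle w\rangle^{-N}$ for $|z|\le 1/2$; together with the bulk estimate $|z|^{-1}|w|^{-1}\langle|z|-|w|\rangle^{-N}$ on $\supp\Psi_2$ and the crude bound for $||z|-|w||<1$, this delivers \eqref{Schur_condition}. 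Your factored sine form is the cleaner starting point, but you must actually perform the oscillatory integration by parts on it rather than invoke a pointwise Taylor bound.
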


\begin{proof}
Recall that $v=\sqrt{|V|}$. By \eqref{K_p}, we can write
\begin{align*}
K_{P}(x,y)=\frac{1}{8\pi(1+i)\|V\|^2_{L^1}}\int_{\R^6}|V(u_1)V(u_2)|\widetilde{K}_P(x-u_1,y-u_2) du_1du_2,
\end{align*}
where
\begin{align}\label{K_p6}
\widetilde{K}_P(z,w)=\int_0^\infty \chi(\lambda)\Big( \frac{e^{i\lambda|z|}-e^{-\lambda|z|}}{|z|}\Big)\Big( \frac{e^{i\lambda|w|}-e^{-i\lambda|w|}}{|w|}\Big)d\lambda.
\end{align}
We set
\begin{align}\label{Psi1(z,w)}
\Psi(z,w):=\widetilde{K}_P(z,w)+\frac{4i|z|\chi_{\{||z|-|w||\ge1\}}}{|z|^4-|w|^4},
\end{align}
so that  $K_P(x,y)=\mathbb{G}(x,y)+\mathbb{F}(x,y)$ as expressed  above.
If $T_{\Psi}\in \mathbb B(L^p(\R^3))$ for all $1\le p\le \infty$, then Minkowski's integral inequality and the invariance of $L^p$-norm under the translation yield
 $$\|T_\mathbb{F}\|_{L^p \to L^p}\le\frac{1}{8\sqrt{2}\pi}\|T_\Psi\|_{L^p \to L^p}.$$
By virtue of Schur's test,  it thus suffices to show that $\Psi$ is an admissible  kernel on $\R^3\times\R^3$, that is,
 \begin{equation}
 \label{Schur_condition}\sup_{z\in \R^3}\int_{\R^3}|\Psi(z,w)|dw+\sup_{w\in \R^3}\int_{\R^3}|\Psi(z,w)|dz<\infty.
 \end{equation}
To this end, we write $\Psi=\Psi_1+\Psi_2$, where
$$
\Psi_1(z,w)=\widetilde{K}_P(z,w)\chi_{\{||z|-|w||<1\}},\quad \Psi_2(z,w)=\left(\widetilde{K}_P(z,w)+\frac{4i|z|}{|z|^4-|w|^4}\right)\chi_{\{||z|-|w||\ge1\}}.
$$

We first deal with $\Psi_1$. Since
$$|F_\pm(s)|=\Big|\frac{e^{\pm is}-e^{-s}}{s}\Big|\lesssim \min\Big\{1, \frac{1}{s}\Big\},$$
it follows from \eqref{K_p6} that
\begin{align}\label{K_p7}|\widetilde{K}_P(z,w)|\lesssim\int_0^\infty \lambda^2\chi(\lambda)|F_+(\lambda|z|)||(F_+-F_-)(\lambda|w|)|d\lambda\lesssim \min\Big\{1, \frac{1}{|z|}, \frac{1}{|w|}, \frac{1}{|z||w|}\Big\}.
\end{align}
Using the bound $|\widetilde{K}_P(z,w)|\lesssim1$, we obtain
$$
\sup_{|z|\le1}\int_{\mathbb{R}^3}|\Psi_1(z,w)|dw\lesssim \sup_{|z|\le1}\int_{||z|-|w||<1}dw<\infty.
$$
When $|z|\ge1$, using the bound $|\widetilde{K}_P(z,w)|\lesssim |z|^{-1}|w|^{-1}$, we have
\begin{align*}
\sup_{|z|\ge1}\int_{\mathbb{R}^3}|\Psi_1(z,w)|dw\lesssim \sup_{|z|\ge1}\left(\frac{1}{|z|}\int_{||z|-|w||<1}\frac{1}{|w|}dw\right)\lesssim\sup_{|z|\ge1}\left(\frac{1}{|z|}\int^{|z|+1}_{|z|-1}r dr\right)<\infty.
\end{align*}
Thus,
\begin{align}
\label{Schur_condition_1}
\sup_{z\in \R^3}\int_{\mathbb{R}^3}|\Psi_1(z,w)|dw<\infty.
\end{align}
The same argument also shows
\begin{align}
\label{Schur_condition_2}
\sup_{w\in \R^3}\int_{\R^3}|\Psi_1(z,w)|dz<\infty.
\end{align}

To deal with $\Psi_2$, integrating by parts in \eqref{K_p6} yields
\begin{align*}
\widetilde{K}_P(z,w)=&\frac{1}{|z||w|}\int_0^\infty\big(e^{i\lambda|z|}-e^{-\lambda|z|} \big)\big( e^{i\lambda|w|}-e^{-i\lambda|w|}\big)\chi(\lambda)d\lambda\nonumber\\
=&\frac{1}{|z||w|}\Big(-\frac{1}{i(|z|+|w|)}+\frac{1}{i(|z|-|w|)}+\frac{1}{|z|+i|w|}-\frac{1}{|z|-i|w|}\Big)\nonumber\\
&+\frac{1}{|z||w|}\int_0^\infty \Big(-\frac{e^{i\lambda(|z|+|w|)}}{i(|z|+|w|)}+\frac{e^{i\lambda(|z|-|w|)}}{i(|z|-|w|)}
+\frac{e^{-\lambda(|z|+i|w|)}}{|z|+i|w|}-\frac{e^{-\lambda(|z|-i|w|)}}{|z|-i|w|}\Big)\chi'(\lambda)d\lambda.
\end{align*}
Since
$$
\frac{1}{|z||w|}\Big(-\frac{1}{i(|z|+|w|)}+\frac{1}{i(|z|-|w|)}+\frac{1}{|z|+i|w|}-\frac{1}{|z|-i|w|}\Big)=\frac{-4i|z|}{|z|^4-|w|^4},
$$
we find
\begin{align}
\label{Psi_2}
\Psi_2(z,w)=\frac{\chi_{\{||z|-|w||\ge1\}}}{|z||w|}\int_0^\infty \Big(-\frac{e^{i\lambda(|z|+|w|)}}{i(|z|+|w|)}+\frac{e^{i\lambda(|z|-|w|)}}{i(|z|-|w|)}+\frac{e^{-\lambda(|z|+i|w|)}}{|z|+i|w|}
-\frac{e^{-\lambda(|z|-i|w|)}}{|z|-i|w|}\Big)\chi'(\lambda)d\lambda.
\end{align}
Using this expression we shall show that
\begin{align}
\label{K_p9}
|\Psi_2(z,w)|\le C_N\begin{cases}|z|^{-1}|w|^{-1}\<|z|-|w|\>^{-N}&\text{for all $(z,w)\in \supp \Psi_2$},\\ \<z\>^{-N} & \text{if $|w|\le1/2$},\\ \<w\>^{-N}&\text{if $|z|\le1/2$},\end{cases}
\end{align}
which implies
\begin{align*}
&\sup_{z\in \R^3}\int_{\R^3}|\Psi_2(z,w)|dw\\
&\le \sup_{|z|\le1/2}\int_{\R^3}|\Psi_2(z,w)|dw+\sup_{|z|>1/2}\left(\int_{|w|\le1/2}+\int_{|w|>1/2}\right)|\Psi_2(z,w)|dw\\
&\lesssim \sup_{|z|\le1/2}\int_{\R^3}\<w\>^{-N}dw+\sup_{|z|>1/2}\left(\int_{|w|\le 1/2}1dw+
\int_{|w|>1/2,\atop ||z|-|w||\ge1}|z|^{-1}|w|^{-1}\<|z|-|w|\>^{-N}dw\right)<\infty
\end{align*}
and similarly
$$
\sup_{w\in \R^3}\int_{\R^3}|\Psi_2(z,w)|dz<\infty.
$$
These two bounds, \eqref{Schur_condition_1} and \eqref{Schur_condition_2} imply \eqref{Schur_condition}.

It remains to show \eqref{K_p9}. To prove the first estimate in \eqref{K_p9}, we observe that, since $\chi'$ is compactly supported in $(0,\infty)$, if we integrate by parts the integral in \eqref{Psi_2}, then the boundary terms at $\lambda=0,\infty$ vanish identically. Taking  into account this fact and the bounds $$||z|\pm|w||\ge ||z|-|w||\ge1,\quad ||z|\pm i|w||\ge ||z|-|w||\ge1,  \,\,\,\,\,\,  \text{on}  \supp \Psi_2, $$  we make use of integration by parts $N$ times to obtain
$$
|\Psi_2(z,w)|\le C_N|z|^{-1}|w|^{-1}||z|-|w||^{-N}\le C_N|z|^{-1}|w|^{-1}\<|z|-|w|\>^{-N}.
$$

For the second estimate in  \eqref{K_p9}, using the formula
$$
\frac{e^{\lambda(a+b)}}{a+b}-\frac{e^{\lambda(a-b)}}{a-b}
=be^{\lambda a}\left(\frac{\lambda}{a+b}\frac{e^{\lambda b}-1}{\lambda b}-\frac{\lambda}{a-b}\frac{e^{-\lambda b}-1}{\lambda b}-\frac{2}{a^2-b^2}\right)
$$
with $(a,b)=(i|z|,-i|w|)$ or $(-|z|,i|w|)$, we rewrite the integrand of $\Psi_2$   as
\begin{align*}
&\frac{\chi'(\lambda)}{|z||w|}\Big(\frac{e^{\lambda(i|z|-i|w|)}}{i|z|-i|w|}-\frac{e^{\lambda(i|z|+i|w|)}}{i|z|+i|w|}
+\frac{e^{\lambda(-|z|+i|w|)}}{-|z|+i|w|}-\frac{e^{\lambda(-|z|-i|w|)}}{-|z|-i|w|}\Big)\\
&=\frac{e^{i\lambda|z|}}{|z|}\left(\frac{\lambda \chi'(\lambda)}{i|z|-i|w|}\frac{e^{-i\lambda |w|}-1}{\lambda |w|}-\frac{\lambda \chi'(\lambda)}{i|z|+i|w|}\frac{e^{i\lambda |w|}-1}{\lambda |w|}-\frac{2i\chi'(\lambda)}{|z|^2-|w|^2}\right)\\
&\quad +\frac{e^{-\lambda|z|}}{|z|}\left(\frac{\lambda \chi'(\lambda)}{-|z|+i|w|}\frac{e^{i\lambda|w|}-1}{\lambda |w|}+\frac{\lambda \chi'(\lambda)}{|z|+i|w|}\frac{e^{-i\lambda|w|}-1}{\lambda |w|}-\frac{2i\chi'(\lambda)}{|z|^2+|w|^2}\right).
\end{align*}
Since for any $\ell\ge0$ there exists $C_\ell$ such that for any $\lambda>0$ and $w$ with $|w|\le 1/2$,
$$
\left|\partial_\lambda^\ell\left(\frac{e^{\pm i\lambda|w|}-1}{\lambda |w|}\right)\right|\le C_\ell,\quad \ell=0,1,2,...,
$$
with the bound $|z|\ge 1/2$ under the restrictions $||z|-|w||\ge1$ and $|w|\le 1/2$ at hand, we obtain  the second estimate in  \eqref{K_p9} by integrating by parts $N$ times in \eqref{Psi_2}. Changing the role of $z$ and $w$, we also obtain the third estimate in \eqref{K_p9} by the same argument.
\end{proof}

\begin{proof}[Proof of Proposition \ref{proposition_example_1}]
By Lemma \ref{K_p5}, $T_\mathbb{F}\in \mathbb B(L^p(\R^3))$ for all $1\le p\le \infty$. To disprove the $L^1$- and $L^\infty$- boundedness of $T_{K_P}$, it thus is enough to prove $T_{\mathbb{G}}\notin \mathbb B(L^1(\R^3))\cup \mathbb B(L^\infty(\R^3))$. Let
$$\Phi(u_1,u_2, x)=\int_{|y|\le R}\frac{|x-u_1|\ \chi_{\{||x-u_1|-|y-u_2||\ge 1\}}}{|x-u_1|^4-|y-u_2|^4}dy$$
be such that
\begin{align}
\label{T_G}
T_{\mathbb{G}}f_R(x)=\frac{-1-i}{4\pi\|V\|^2_{L^1}} \int_{\R^6}|V(u_1)V(u_2)|\Phi(u_1,u_2, x)du_1du_2. 
\end{align}
\vskip0.2cm
(i) \underline{The unboundedness of $T_{\mathbb{G}}$ on $L^\infty$.}
\vskip0.2cm
Suppose $R\ge1$ and $R+2R_0+1\le |x|\le R+2R_0+2$. We shall claim that
\begin{align}
\label{phi}\Phi(u_1,u_2, x)\ge \frac{\pi}{2}\ln\Big(1+\frac{R-R_0}{2R_0+1}\Big),\end{align}
uniformly for $u_1,u_2\in B(0,R_0)$ if $R$ is large enough. If  \eqref{phi} holds, then by \eqref{T_G} we obtain
$$|T_{\mathbb{G}}f_R(x)|=\frac{1}{2\sqrt{2}\pi\|V\|^2_{L^1}}\int|V(u_1)V(u_2)| \Phi(u_1,u_2, x)du_1du_2\ge \frac{1}{4\sqrt{2}}\ln\Big(1+\frac{R-R_0}{2R_0+1}\Big).$$
This implies $\|T_{\mathbb{G}}f_R\|_{L^\infty}\to \infty$ as $R\to \infty$ and thus $T_{\mathbb{G}}\notin \mathbb B(L^\infty(\R^3))$ since $\|f_R\|_{L^\infty}=1$. 
 
To prove \eqref{phi}, we let $|u_1|\le R_0$, $|u_2|\le R_0$ and set $z=y-u_2$ so that
$$\Phi(u_1,u_2, x)=\int_{|z+u_2|\le R}\frac{|x-u_1|\ \chi_{\{||x-u_1|-|z||\ge 1\}}}{|x-u_1|^4-|z|^4}dz.$$
Since $|x-u_1|\ge |x|-|u_1|\ge R+R_0+1\ge |z|+1$ if $|z+u_2|\le R$, we have $\chi_{\{||x-u_1|-|z||\ge 1\}}=1$ and
\begin{align}
\label{phi1}
\Phi(u_1,u_2, x)&=\int_{|z+u_2|\le R}\frac{|x-u_1|}{|x-u_1|^4-|z|^4}dz\nonumber\\
&\ge \int_{|z|\le R-R_0}\frac{|x-u_1|}{|x-u_1|^4-|z|^4}dz\nonumber\\
&= 4\pi\int_0^{R-R_0}\frac{|x-u_1|r^2}{|x-u_1|^4-r^4}dr\nonumber\\
&= 2\pi\int_0^{R-R_0}|x-u_1|\Big(\frac{1}{|x-u_1|^2-r^2}-\frac{1}{|x-u_1|^2+r^2}\Big)dr\nonumber\\
&\ge 2\pi \Big(\int_0^{\frac{R-R_0}{|x-u_1|}}\frac{1}{1-s^2}ds\Big)-2\pi\Big(\int_\R\frac{1}{1+s^2}ds\Big)\nonumber\\
&=\pi\ln\Big(1+\frac{2(R-R_0)}{|x-u_1|-R+R_0}\Big)-2\pi^2.
\end{align}
Since $|x-u_1|-R+R_0\le 4R_0+2$, by the monotonicity of $\ln \big(1+\frac{1}{x}\big)$, \eqref{phi1} hence implies \eqref{phi} for sufficiently large $R$.  \vskip0.2cm
(ii)  \underline{The unboundedness of $T_{\mathbb{G}}$ on $L^1$.}
\vskip0.2cm
Let $|x|\ge 3R_0+2$, $|u_1|\le R_0$, $|u_2|\le R_0 $ and $|y|\le 1$. Then
 $$|x-u_1|\ge  |x|-|u_1|\ge 2R_0+2\ge 2|y-u_2|,$$
which implies
$$|x-u_1|-|y-u_2|\ge \frac{1}{2}|x-u_1|\ge R_0+1\ge 1.$$
Hence when $|x|\ge 3R_0+2$,
\begin{align*}
|T_{\mathbb{G}}f_1(x)|=\frac{1}{2\sqrt{2}\pi\|V\|^2_{L^1}}\int_{\R^6}|V(u_1)V(u_2)|\Big(\int_{|y|\le 1}\frac{|x-u_1|}{|x-u_1|^4-|y-u_2|^4}dy\Big)du_1du_2
\end{align*}
Since $|x-u_1|^4-|y-u_2|^4\le |x-u_1|^4$ and $|x-u_1|\le |x|+|u_1|\le \frac{4}{3}|x|$, this shows
\begin{align*}
\int_{|x|\le R}|T_\mathbb{G}f_1|dx
&\gtrsim\frac{1}{\|V\|^2_{L^1}}\int_{\R^6}|V(u_1)V(u_2)|
\Big(\int_{3R_0+2\le |x|\le R}\int_{|y|\le 1}\frac{|x-u_1|}{|x-u_1|^4-|y-u_2|^4}dydx\Big)du_1du_2\\
&\gtrsim\frac{1}{\|V\|^2_{L^1}}\int_{\R^6}\int_{3R_0+2\le |x|\le R}\frac{|V(u_1)V(u_2)|}{|x-u_1|^3}dx\ du_1du_2\\
&\gtrsim\int_{3R_0+2\le |x|\le R}\frac{1}{|x|^3}dx
\gtrsim \ln\Big(\frac{R}{3R_0+2}\Big)\to \infty
\end{align*}
as $R\rightarrow\infty$. Therefore, $T_{\mathbb{G}}f_1\notin L^1(\R^3)$ and $T_{\mathbb G}\notin \mathbb B(L^1(\R^3))$ since $f_1\in L^1(\R^3)$. 
\end{proof}

\section{Appendix--the proof of  Lemma \ref{lemma_3_3}}
\label{appendix}

In the appendix,  we will prove Lemma \ref{lemma_3_3} on the  expansion of $M^{-1}(\lambda)$ near  $\lambda=0$ in regular case. Before the proof, we list the following  lemma used in the proof.
\begin{lemma}\label{lemma-JN}(\cite[Lemma 2.1]{JN})
	Let $A$ be a closed operator and $S$ be a projection. Suppose $A+S$ has a bounded inverse. Then $A$ has
	a bounded inverse if and only if
	\begin{equation*}		
		a:= S-S(A+S)^{-1}S
	\end{equation*}
	has a bounded inverse in $SH$, and in this case
	\begin{equation*}		
		A^{-1}= (A+S)^{-1} + (A+S)^{-1}S a^{-1} S(A+S)^{-1}.
	\end{equation*}	
\end{lemma}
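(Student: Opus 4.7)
The plan is to prove both directions of the equivalence together with the explicit resolvent formula by a single algebraic manipulation anchored at the identity $A = (A+S) - S$. Set $B := (A+S)^{-1}$, which exists by hypothesis. Multiplying $A + S - S = A$ on the right and on the left by $B$ produces the pair of identities
\[
AB = I - SB, \qquad BA = I - BS.
\]
From the first, $a = S - SBS = S - (I - AB)S = ABS$ as operators on $H$. The same computation $ABS = (I - SB)S = S - SBS$ confirms $\Ran(a) \subset SH$, and $a(I - S) = ABS(I - S) = AB(S - S^2) = 0$ shows $a$ vanishes on $\ker S$; hence $a$ is naturally viewed as an operator on $SH$, as in the statement.

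For the ``if'' direction, assume $a^{-1} \in \mathbb{B}(SH)$ and extend it to $H$ by $a^{-1}(I - S) = 0$, so that $a^{-1} a = S$ on all of $H$. I would verify the formula $A^{-1} = B + BSa^{-1}SB$ by direct substitution:
\[
A(B + BSa^{-1}SB) = AB + (ABS)\,a^{-1} SB = (I - SB) + a a^{-1} SB = (I - SB) + SB = I,
\]
using $a a^{-1} = I$ on $SH$ together with $\Ran(SB) \subset SH$. The companion equality $(B + BSa^{-1}SB) A = I$ follows analogously from $BA = I - BS$ and $SBA = S(I - BS) = S - SBS = a$, which yields $BSa^{-1}(SBA) = BS(a^{-1}a) = BS \cdot S = BS$, and hence the total $(I - BS) + BS = I$.

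For the converse, assume $A$ has a bounded inverse. I would exhibit the two-sided inverse of $a|_{SH}$ explicitly as $(I + SA^{-1})|_{SH}$. Given $y \in SH$, the equation $a x = y$ with $x \in SH$ reduces to $ABx = y$ (since $ax = ABSx = ABx$ when $Sx = x$); its unique solution is $x = B^{-1}A^{-1} y = (A + S)A^{-1} y = y + SA^{-1} y$, which lies in $SH$ because $Sy = y$. A short direct check using the auxiliary identity $BSA^{-1} = (I - BA)A^{-1} = A^{-1} - B$ then confirms both $a(y + SA^{-1}y) = y$ and $(I + SA^{-1})\,a x = (A + S)Bx = x$ for all $x, y \in SH$. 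No genuine analytic difficulty arises in this lemma; the only step requiring real care is the bookkeeping of domains for the non-square operator $a : H \to SH$ and the convention for extending $a^{-1}$ off $SH$, which I have fixed above.
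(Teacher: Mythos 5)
Your proof is correct. Note that the paper does not actually prove this lemma; it is quoted verbatim from Jensen--Nenciu \cite{JN} and used as a black box. Your direct algebraic verification, built on the two identities $AB = I - SB$ and $BA = I - BS$ together with the observation $a = ABS$ (equivalently $a = SBA$ on $D(A)$), is essentially the standard proof one finds for Feshbach/Grushin-type reduction formulas, and it is complete: in the ``if'' direction you correctly check that $B + BSa^{-1}SB$ is a two-sided inverse, using $aa^{-1} = a^{-1}a = S$ and the fact that $\Ran(SB)\subset SH$; in the ``only if'' direction you exhibit $(I+SA^{-1})|_{SH}$ as the inverse of $a|_{SH}$ via $BSA^{-1} = A^{-1} - B$. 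The only point requiring genuine care when $A$ is unbounded---that $B$ maps $H$ into $D(A)$ so $AB$ is globally defined, while $BA$ and $SBA = a$ hold on $D(A)$, and that $B + BSa^{-1}SB$ has range in $\Ran(B)=D(A)$ so $A$ may be applied to it---is handled correctly by your setup.
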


\vskip0.2cm
\begin{proof}[The proof of Lemma \ref{lemma_3_3}].
	Firstly, we  expand  $M(\lambda)$ as follows for  small $\lambda$ by  Taylor expanding the exponentials in $F_+(\lambda|x-y|)=(\lambda|x-y|)^{-1}(e^{i(\lambda|x-y|)}-e^{-(\lambda|x-y|)})$:
	\begin{equation}\label{Mpm-2}
		\begin{split}
			M(\lambda)=U+vR^+_0(\lambda^4)v= \frac{a}{\lambda}P +T+a_1\lambda vG_1v+O\big(\lambda^3v(x)|x-y|^4v(y)\big),
		\end{split}
	\end{equation}
	where
	\begin{equation*}
		\begin{split}
			&T=U+vG_0v,\ \  G_0=-\frac{|x-y|}{8\pi},\ \ G_1(x,y)=|x-y|^2,\\
			& a= \frac{1+i}{8\pi} \|V \|_{L^1},\ \ a_1=\frac{1- i}{8\pi \cdot3!}, \ \ v=\sqrt{|V|},
		\end{split}
	\end{equation*}
and $O\big(\lambda^3v(x)|x-y|^4v(y)\big)$ denotes a $\lambda$-dependent absolutely bounded operator whose kernel is dominated by $ C\lambda^3v(x)|x-y|^4v(y)$ for some $C>0$.
	Next,  we are devoted to obtaining \eqref{lemma_3_3_1}.   Write
	\begin{equation*}
		\begin{split}
			M(\lambda)= \frac{a}{\lambda}\big(P +\frac{\lambda}{a}T+\frac{a_1}{a}\lambda^2 vG_1v+O\big(\lambda^4v(x)|x-y|^4v(y)\big)\big):=\frac{a}{\lambda}\widetilde{M}(\lambda).
		\end{split}
	\end{equation*}
Clearly, it suffices to establish the inverse of $\widetilde{M}(\lambda)$ in order to obtain $M^{-1}(\lambda)$ for small $ \lambda$. For convenience, in the following, we also use $O(\lambda^k)$ as an absolutely bounded operator on $L^2(\mathbb{R}^3)$, whose bound is dominated by $C\lambda^k$.

	Note that by Neumann series expansion, the operator $\widetilde{M}(\lambda)+Q$ is inverse for $ \lambda$ sufficiently small,  and its inverse operator is given by
	\begin{equation*}
		\begin{split}
			\big(\widetilde{M}(\lambda)+Q\big)^{-1}
			= I-\sum_{k=1}^3 \lambda^kB_k+O(\lambda^4),
		\end{split}
	\end{equation*}
	where $B_k(1\leq k\leq 3)$  are absolutely bounded operators in $L^2(\mathbb{R}^3)$ as follows:
	\begin{equation*}
		\begin{split}
			B_1=& \frac{1}{a} T, \,
			B_2= \frac{a_1}{a} vG_1v -\frac{1}{a^2}T^2, \,
			B_3= -\frac{a_1}{a^2} (TvG_1v +vG_1vT )
			+ \frac{1}{a^3}T^3.
		\end{split}
	\end{equation*}
	Let
	\begin{equation*}
		\begin{split}
			M_1(\lambda)
			&:=Q-Q(\widetilde{M}(\lambda)+Q)^{-1}Q\\
			&=\frac{\lambda}{a}\big(QTQ+a\lambda QB_2 Q+a \lambda^2QB_3 Q +O(\lambda^3)\big):=\frac{\lambda}{a}\widetilde{M}_1(\lambda).
		\end{split}
	\end{equation*}
	Since zero is a regular point  of $H$, i.e.  $QTQ$ is invertible on $QL^2(\R^3)$,   then $\widetilde{M}_1(\lambda)$ is  invertible on $QL^2(\mathbb{R}^3)$. By Neumann series expansion,  as $ \lambda$ sufficiently small,  one has on $QL^2(\mathbb{R}^3)$:
	\begin{equation*}
		\begin{split}
			M_1^{-1}(\lambda)=\frac{a}{\lambda} \widetilde{M}_1^{-1}(\lambda)=\frac{a}{\lambda}D_0-a^2 D_0B_2 D_0+\lambda\big(a^3D_0(B_2 D_0)^2-a^2D_0B_3D_0\big)+O(\lambda^2),
		\end{split}
	\end{equation*}
	where $D_0:= (QTQ)^{-1}$.  Thus according to Lemma \ref{lemma-JN}, the inverse operator $\widetilde{M}^{-1}(\lambda)$ exists for sufficiently small $\lambda$, and
	$$
\widetilde{M}^{-1}(\lambda)=	\big(\widetilde{M}(\lambda)+Q\big)^{-1} +	\big(\widetilde{M}(\lambda)+Q\big)^{-1} QM_1^{-1}(\lambda)Q 	\big(\widetilde{M}(\lambda)+Q\big)^{-1}.
	$$
	Hence we finally obtain  as sufficiently small $\lambda$,
	\begin{equation*}
		\begin{split}
			M^{-1}(\lambda)&=\frac{\lambda}{a}\widetilde{M}^{-1}(\lambda)=D_0+\lambda\big(\frac{1}{a}Q-\frac{1}{a}D_0T-\frac{1}{a}TD_0+\frac{1}{a}D_0T^2D_0-a_1 D_0vG_1vD_0\big)\\
			& \ \ \ \ \ \ \ \ \ \ \ \ \ \ \ + \frac{1}{a}\lambda P+\lambda^2A_2+O(\lambda^3)\\
			& :=QA_{0}Q+\lambda\left(QA_{1,0}+A_{0,1}Q\right)
			+\lambda\tilde P+\lambda^2A_2+\Gamma_3(\lambda),
		\end{split}
	\end{equation*}
where  $A_0, A_{1,0}, A_{0,1}$ and $A_2$ are absolutely  bounded operators on $L^2(\R^3)$ independent of $\lambda$, and the error term $\Gamma_3(\lambda)$  satisfies the desired bounds \eqref{lemma_3_3_4}.
So we complete the whole proof.
\end{proof}

\section*{Acknowledgments}
H. Mizutani is partially supported by JSPS KAKENHI Grant-in-Aid for Scientific Research (C) \#JP21K03325. Z. Wan and X. Yao are partially supported by NSFC grants No.11771165 and 12171182. The authors would like to express their thanks to Professor Avy Soffer for his interests and insightful discussions about topics on higher-order operators. Finally, we also thank the anonymous referee for insightful and helpful comments which greatly improve the present version.


\end{document}